\tikzstyle directed=[postaction={decorate,decoration={markings,
    mark=at position #1 with {\arrow{>}}}}]
\tikzstyle rdirected=[postaction={decorate,decoration={markings,
    mark=at position #1 with {\arrow{<}}}}]
\newcommand{\hackcenter}[1]{
 \xy (0,0)*{#1}; \endxy}
\tikzset{->-/.style={decoration={
  markings,
  mark=at position #1 with {\arrow{>}}},postaction={decorate}}}
\newcommand{\brk}[1]{{\left\langle{#1}\right\rangle}}
\newcommand{\md}{\operatorname{\mathsf{d}}}
\newcommand{\mt}{\operatorname{\mathsf{t}}}
\newcommand{\id}{{\rm id}}
\newcommand{\cat}{\mathcal{C}}
\theoremstyle{plain}
\newtheorem{theorem}{Theorem}
\newtheorem*{theo}{Theorem}
\newtheorem{corollary}[theorem]{Corollary}
\newtheorem{proposition}[theorem]{Proposition}
\newtheorem{lemma}[theorem]{Lemma}
\newtheorem{notation}[theorem]{Notation}
\theoremstyle{definition}
\newtheorem{definition}[theorem]{Definition}
\theoremstyle{definition}
\newtheorem{remark}[theorem]{Remark}
\newcommand{\maps}{\colon}
\newcommand{\refequal}[1]{\xy {\ar@{=}^{#1}
(-1,0)*{};(1,0)*{}};
\endxy}
\newcommand{\Hom}{{\rm Hom}}
\renewcommand{\to}{\rightarrow}
\newcommand{\Br}{{\rm Br}}
\def\Id{\mathrm{Id}}
\def\Br{{\mathrm{Br}}}
\numberwithin{equation}{section}
\newcommand{\wb}{\overline}
\newcommand{\slt}{{\mathfrak{sl}(2)}}
\newcommand{\Uq}{{U_q\slt}}
\newcommand{\UqMed}{{\wb U_q\slt}}
\newcommand{\UsltH}{{U_{q}^{H}\slt}}
\newcommand{\Ubar}{{\wb U_q^{H}\slt}}
\let\tilde=\widetilde
\let\epsilon=\varepsilon
\def\C{{\mathbb{C}}}
\def\N{{\mathbbm N}}
\def\Z{{\mathbbm Z}}
\def\1{\mathbbm{1}}%
\newcommand\nc{\newcommand}
\nc\rnc{\renewcommand}
\nc\Kar{\operatorname{Kar}}
\nc\End{\operatorname{End}}
\newcommand{\scs}{\scriptstyle}
\nc\Sym{\operatorname{Sym}}
\newcommand{\coev}{\stackrel{\longrightarrow}{\operatorname{coev}}}
\newcommand{\ev}{\stackrel{\longrightarrow}{\operatorname{ev}}}
\newcommand{\tev}{\stackrel{\longleftarrow}{\operatorname{ev}}}
\newcommand{\tcoev}{\stackrel{\longleftarrow}{\operatorname{coev}}}
\newcommand{\qr}{{q}}
\newcommand{\qn}[1]{{\left\{#1\right\}}}
\newcommand{\qN}[1]{{\left[#1\right]}}
\newcommand{\qdim}{\operatorname{qdim}}
\newcommand{\et}{{\quad\text{and}\quad}}
\title{Fusion Trees and Homological Representations}
\author[S. Kim]{Sung Kim}
\address{Department of Mathematics\\
 University of Southern Californa \\
  Los Angeles, California 90089, USA}
  \email{skim2261@usc.edu}
\begin{document}

\begin{abstract}
We establish an identification between the spaces of $\alpha$-fusion trees in non-semisimple topological quantum computation (NSS TQC) and a family of homological representations of the braid group known as the Lawrence representations specialized at roots of unity. Leveraging this connection, we provide a new proof of Ito's colored Alexander invariant formula using graphical calculus. Inspired by Anghel's topological model, we derive a formula involving the Hermitian pairing of fusion trees. This formula verifies that non-semisimple quantum knot invariants can be explicitly encoded via the language of fusion trees in the NSS TQC mathematical architecture.
\end{abstract}

\maketitle

\section{Introduction}
Topological quantum field theory (TQFT) is a rich subject that appears in various prominent physical systems, such as the fractional quantum Hall effect. Modular categories (MCs) encode the algebraic description of TQFTs, and there is a well-known correspondence between modular fusion categories and $(2+1)$-dimensional TQFTs \cite{BK}. Freedman, Kitaev, Larsen, and Wang demonstrated that these categories provide the algebraic theory to explore topological order with strong potential applications to fault-tolerant quantum computation \cite{FKLW}. 

\subsection{Unitary TQFTs and Topological Phases of Matter} One of the most influential physical theories, the Chern-Simons-Witten theory, appears in mathematical physics as an equivalent notion of the Witten-Reshetikhin-Turaev TQFT coming from the quantum group $\mathfrak{sl}(2)$ at a root of unity. When unitary, these theories govern the mathematics of $(2+1)$-dimensional topological phases with rich properties that can host quasiparticle excitations called \textit{anyons}. These quasiparticles possess intriguing properties when they swap positions with one another; the spacetime graph of anyons carve out shapes that look like knots and links. The algebraic theory of anyons are encoded in a mathematical framework known as MCs. In particular, an MC encodes data used to compute quantum link invariants that correspond to the braiding statistics of an anyon that only depends on the topology of the knot or link.

The exotic properties of topological phases of matters and their anyons are the foundation for fault-tolerant approaches to quantum computation via topological quantum computation (TQC). In such framework, quantum information are encoded in the topologies of the system, quantum states are encoded as \emph{fusion trees} in Hom-spaces (see Figure~\ref{fig:fusion-trees}), and quantum computation via unitary gates are performed by braiding anyons. More information on this architecture can be found in \cite{FKLW, SimonBook, TQC}.

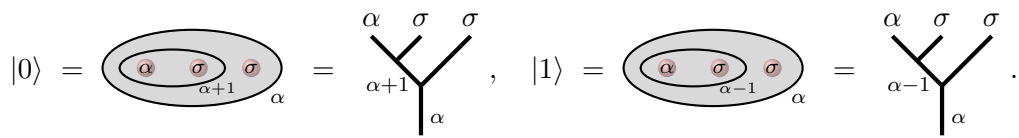
\begin{figure}[htp!]  
    \centering
$
 \ket{0}~=~\hackcenter{\begin{tikzpicture}[scale=0.35]
          \draw[thick, fill=gray!30] (1.75,0,0) ellipse (3.4 and 1.45);
        \draw[thick,   fill=gray!30] (1,0,0) ellipse (2 and .7);
    \shade[ball color = red!40, opacity = 0.5] (0,0,0) circle (.35);
    \shade[ball color = red!40, opacity = 0.5] (2,0,0) circle (.35);
    \shade[ball color = red!40, opacity = 0.5] (4,0,0) circle (.35);
    \node at (0,0) {$\scriptstyle \alpha$};
    \node at (2,0) {$\scriptstyle \sigma$};
    \node at (4,0) {$\scriptstyle \sigma$};
            \node at (2.7,-.8) {$\scriptscriptstyle \alpha+1$};
            \node at (5,-1.15) {$\scriptstyle \alpha$};
\end{tikzpicture}}~=~
\hackcenter{\begin{tikzpicture}[ scale=1.1]
  \draw[ultra thick, black] (0,0) to (.6,-.6) to (.6,-1.2);
  \draw[ultra thick, black] (0.6,0) to (.3,-.3);
  \draw[ultra thick, black] (1.2,0) to (.6,-.6); 
   \node at (0,0.2) {$\alpha$};
   \node at (.6,0.2) {$\sigma$}; 
   \node at (1.2,0.2) {$\sigma$};  
    \node at (.8,-1) {$\scriptstyle \alpha$};
    \node at (.2,-.6) {$\scriptstyle \alpha + 1$}; 
\end{tikzpicture} },
\quad 
\ket{1}~=~\hackcenter{\begin{tikzpicture}[scale=0.35]
          \draw[thick, fill=gray!30] (1.75,0,0) ellipse (3.4 and 1.45);
        \draw[thick,   fill=gray!30] (1,0,0) ellipse (2 and .7);
    \shade[ball color = red!40, opacity = 0.5] (0,0,0) circle (.35);
    \shade[ball color = red!40, opacity = 0.5] (2,0,0) circle (.35);
    \shade[ball color = red!40, opacity = 0.5] (4,0,0) circle (.35);
    \node at (0,0) {$\scriptstyle \alpha$};
    \node at (2,0) {$\scriptstyle \sigma$};
    \node at (4,0) {$\scriptstyle \sigma$};
            \node at (2.7,-.8) {$\scriptscriptstyle \alpha-1$};
            \node at (5,-1.15) {$\scriptstyle \alpha$};
\end{tikzpicture}}~=~
\hackcenter{\begin{tikzpicture}[ scale=1.1]
  \draw[ultra thick, black] (0,0) to (.6,-.6) to (.6,-1.2);
  \draw[ultra thick, black] (0.6,0) to (.3,-.3);
  \draw[ultra thick, black] (1.2,0) to (.6,-.6); 
   \node at (0,0.2) {$\alpha$};
   \node at (.6,0.2) {$\sigma$}; 
   \node at (1.2,0.2) {$\sigma$};  
    \node at (.8,-1) {$\scriptstyle \alpha$};
    \node at (.2,-.6) {$\scriptstyle \alpha - 1$}; 
\end{tikzpicture} }.
$
  \caption{An example of quantum states encoded as fusion trees (see~\cite{neglecton, NSSTQC}).}  \label{fig:fusion-trees}  
\end{figure}

\subsection{The Non-semisimple Paradigm} The fundamental examples of semisimple MCs come from the representation theory of quantum groups with a parameter set at a root of unity. These representation categories are generally non-semisimple: we perform a semisimplification procedure to eliminate an infinite number of non-isomorphic representations with zero quantum dimension. Then, our resulting category has a semisimple MC structure with a finite collection of simple objects. Modern machinery made this procedure necessary for a long time because the standard techniques for building a TQFT from an MC, e.g. the Witten-Reshetikhin-Turaev construction, do not permit inputs of representations of vanishing quantum dimensions. 

How does one build a robust MC theory that retains the non-semisimple information? The works of Blanchet-Costantino-Geer-Patureau (BCGP) and their many collaborators extended our understanding of modular categories and their TQFTs in the semisimple setting to the non-semisimple setting~\cite{BCGP2, CGP14}. There are many advantages to working on the non-semisimple case over their semisimple counterpart: in the context of quantum computation, the permission to keep the infinite number of representations with vanishing dimensions gives greater flexibility in creating state spaces. Furthermore, non-semisimple MCs lead to non-semisimple TQFTs. Two key features of non-semisimple TQFTs that differs from semisimple TQFTs is its ability to distinguish lens spaces \cite{BCGP2} and gives rise to mapping class group representations where the Dehn twist has infinite order \cite{mappingclassrep}. Due to this, the non-semisimple theory suggests far more powerful topological invariants which is a crucial feature for topological orders.

\subsection{Non-semisimple TQFTs and TQC}
Given the clear advantages that the non-semisimple paradigm offers, it is natural to investigate \emph{non-semisimple topological quantum computation} (NSS TQC). To explore these problems, we require Hermitian structures on our category. These non-semisimple Hermitian structures were first developed in~\cite{GLPMS} in the context of the unrolled quantum $\slt$ and were further generalized in~\cite{NSS-Hermitian}. More importantly, it was shown in~\cite{NSS-LV, NSS-Hermitian} that these structures give rise to non-semisimple analogues of the Levin-Wen string-net models.

A fundamental question in TQC asks which anyonic frameworks, i.e. MCs, admit universal quantum computation via braiding alone. A primary strategy to approach this question is to first ask: when does our braid group representation, acting on fusion trees, have a dense image? In~\cite{NSS-Burau}, the authors demonstrated at the fourth root of unity that the braid group acting on fusion trees that involve $V_\alpha$ modules\footnote{These $V_\alpha$ representations recently acquired the name \emph{neglectons} in~\cite{NSSTQC}.}, which have vanishing quantum dimensions, have a dense image. We mention two primary takeaways of their article:
\begin{enumerate}
    \item Non-semisimple categories may provide far more powerful frameworks for TQC compared to their semisimple counterparts via incorporating modules that traditionally get killed in semisimplification.
    \item Their key strategy to prove density came from the perspective of the (reduced) Burau representation, which belongs to a larger family of homological braid group representations called the \emph{Lawrence representations}.
\end{enumerate}
Takeaway (1) was successfully explored in~\cite{neglecton, NSSTQC} where the author and collaborators showed that the non-semisimple Ising model is universal for quantum computation via braiding alone. This demonstrates that non-semisimple categories are effective models for quantum computation; recall that the standard theory of Ising anyons, which is believed to characterize excitations in the $\nu=5/2$ fractional quantum Hall state, is not universal for quantum computation via braiding alone.

In this article, we explore takeaway (2) to acquire a new perspective on a family of fusion trees, called \emph{$\alpha$-fusion trees} (see Figure~\ref{fig:trees}), through the lens of homological representations. Unlike~\cite{neglecton, NSSTQC}, we investigate braid group representations acting on a series of $V_\alpha$ rather than the affine braid group with one $V_\alpha$. We emphasize the important role of homological representations and hope to inspire new homological tools to dissect representations of the braid group that appear in NSS TQC.

\begin{figure}
  \hfill\subcaptionbox{An $\alpha$-fusion tree \label{fig:trees(a)}}[12em]{\centering \hackcenter{\begin{tikzpicture}[scale=1]
\begin{scope}[decoration={markings, mark=at position 0.5 with {\arrow{>}}}]
  \draw[ultra thick, black, postaction=decorate] (0,0) to (0.6,-0.6);
  \draw[ultra thick, black, postaction=decorate] (0.6,-0.6) to (1.2,-1.2);
  \draw[ultra thick, dotted] (1.2,-1.2) to (1.8,-1.8);
  \draw[ultra thick, black, postaction=decorate] (1.8,-1.8) to (2.4,-2.4);
  \draw[ultra thick, black, postaction=decorate] (1.2,0) to (.6,-.6);
  \draw[ultra thick, black, postaction=decorate] (2.4,0) to (1.2,-1.2);
  \draw[ultra thick, black, postaction=decorate] (3.6,0) to (1.8,-1.8);
  \draw[ultra thick, black, postaction=decorate] (4.8,0) to (2.4,-2.4);
  \draw[ultra thick, black, postaction=decorate] (2.4,-2.4) to (2.4,-3.3);
\end{scope}
   \node at (0,0.2) {$\alpha$};
   \node at (1.2,0.2) {$\alpha$};
   \node at (2.4,0.2) {$\alpha$};
   \node at (3,0.2) {$\cdots$};
   \node at (3.6,0.2) {$\alpha$};
   \node at (4.8,0.2) {$\alpha$};
   \node at (.7,-1.1) {$\gamma_1$};
   \node at (1.75,-2.3) {$\gamma_{n-2}$};
   \node at (2.4,-3.4) {$\gamma_{n-1}$};
\end{tikzpicture} } }
  \hfill\subcaptionbox{A special $\alpha$-fusion tree\label{fig:trees(b)}}{\centering \hackcenter{\begin{tikzpicture}[baseline=0, thick, scale=0.5, shift={(0,0)}]
\begin{scope}[shift={(1,-5.5)}]
\begin{scope}[decoration={markings, mark=at position 0.6 with {\arrow{>}}}]
    \draw[ultra thick, black, postaction=decorate] (0,4.5) to (0.8125,3.75);
    \draw[ultra thick, black, postaction=decorate] (1,4.5) to (0.8125,3.75);
    \draw[ultra thick, black, postaction=decorate] (0.8125,3.75) to (1.625,3);
    \draw[ultra thick, black, postaction=decorate] (2,4.5) to (1.625,3);
    \draw[ultra thick, black, dotted] (1.625, 3) to (2.4375, 2.25);
    \draw[ultra thick, black, postaction=decorate] (3, 4.5) to (2.4375, 2.25);
    \draw[ultra thick, black, postaction=decorate] (2.4375,2.25) to (3.25,1.5);
    \draw[ultra thick, black, postaction=decorate] (4, 4.5) to (3.25,1.5);
    \draw[ultra thick, black, postaction=decorate] (5, 4.5) to (4.0625,0.75);
    \draw[ultra thick, black, postaction=decorate] (3.25,1.5) to (4.0625,0.75);
    \draw[ultra thick, black, postaction=decorate] (6, 4.5) to (4.875,0);
    \draw[ultra thick, black, postaction=decorate] (4.0625,0.75) to (4.875,0);
    \draw[ultra thick, black, postaction=decorate] (7, 4.5) to (5.6875,-0.75);
    \draw[ultra thick, black, dotted] (4.875,0) to (5.6875,-0.75);
    \draw[ultra thick, black, postaction=decorate] (8, 4.5) to (6.5,-1.5);
    \draw[ultra thick, black, postaction=decorate] (5.6875,-0.75) to (6.5,-1.5);
    \draw[ultra thick, black, postaction=decorate] (6.5,-1.5) to (6.5,-2.5);
\end{scope}
\end{scope}
\draw (1,-0.8) node {$\scs \alpha$};
\draw (2,-0.8) node {$\scs \alpha$};
\draw (3,-0.8) node {$\scs \alpha$};
\draw (3.5,-1.25) node {$\scs \cdots$};
\draw (4,-0.8) node {$\scs \alpha$};
\draw (5,-0.8) node {$\scs \alpha$};
\draw (6,-0.8) node {$\scs -\alpha$};
\draw (7,-0.8) node {$\scs -\alpha$};
\draw (7.5,-1.25) node {$\scs \cdots$};
\draw (8,-0.8) node {$\scs -\alpha$};
\draw (9,-0.8) node {$\scs -\alpha$};
\draw (2.2,-2.5) node {$\scs \gamma_1$};
\draw (3.3,-4) node {$\scs \gamma_{n-2}$};
\draw (4.25,-4.75) node {$\scs \gamma_{n-1}$};
\draw (5.2,-5.6) node {$\scs \gamma_{n-2}$};
\draw (6.5,-7) node {$\scs \gamma_{1}$};
\draw (7.5,-8.2) node {$\scs \alpha$};
\end{tikzpicture} }}
  \hfill\null
  \caption{Fusion trees involved in this study.}\label{fig:trees}
\end{figure}
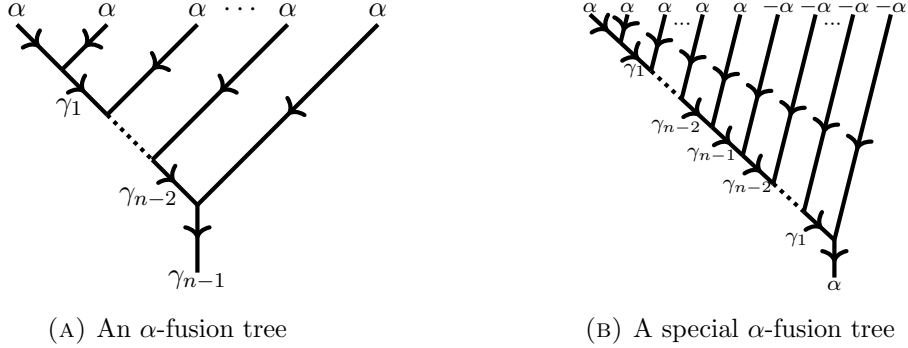

\subsection{Homological Representations}
Before investigating representations of the braid group, we first examine the braid group itself. There are two common flavors of the braid group: an algebraic flavor 
\begin{equation*}
\Br_n^{\sigma}:=\left\langle \sigma_1, \ldots, \sigma_{n-1} \mid \;
 \begin{array}{ll}
  \sigma_i \sigma_{i+1} \sigma_i =  \sigma_{i+1} \sigma_i \sigma_{i+1}  ,  \; &i=1,\ldots, n-2,   \\
\sigma_i\sigma_j = \sigma_j\sigma_i, &|i-j|>2.
\end{array}
  \right\rangle 
\end{equation*}
defined as a group presentation, using the Artin generators satisfying the braid relations, and a geometric flavor 
\begin{equation*}
\Br_n^{\Sigma}:= \mathrm{MCG}(\Sigma_n)
\end{equation*}
defined as the mapping class group of the $n$-punctured disc $\Sigma_n$.

Representations of the braid group constructed from MCs arising from representations of a quantum group, which we call \textit{quantum representations}, ubiquitously use the algebraic flavor of the braid group $\mathrm{Br}_n^{\sigma}$. Using these quantum representations, one can construct quantum link invariants, which we emphasize are \textit{topological} invariants, from a purely algebraic construction. Therefore, these quantum invariants leaves the user with an interesting mystery on what topological information they capture. Despite this weakness of topological interpretability, the combinatorial nature of $\mathrm{Br}_n^\sigma$ promotes a major aspect of algorithmic computability.

Braid group representations built from homology theories, which we call \textit{homological representations}, takes advantage of the geometric landscape provided by the mapping class group definition of the braid group $\Br_n^{\Sigma}$. These homological representations provide a wealth of geometric intuition and topological insights. However, relative to quantum representations of the braid group, algorithmic computations are not always readily available. These two types of braid group representations unveil a clear tension between features of algorithmic computability and topological interpretability. Moreover, one should expect there to be a way to translate the information provided by a quantum (resp. homological) representation $\Br_n^{\sigma} \rightarrow \mathrm{End}(V)$ to a homological (resp. quantum) representation $\Br_n^{\Sigma} \rightarrow \mathrm{End}(V')$ to gain more topological insights (resp. computational advantages) with the potential expense of sacrificing some versatility of the other feature.

\subsection{Fusion Trees and Homological Representations}
The history between homological and quantum representations is well-recorded in~\cite{Homological-Martel}. As elaborated above, fusion trees are a type of quantum representations that encode quantum states and thus vital for the mathematics of TQC. Consequently, one should expect an explicit translation between fusion trees and homological representations. We prove the following statement.
\begin{theo}[Theorem~\ref{thm:fusiontree-homological}]
The braid group representation on $\mathcal{H}_{n,m,\alpha}^r$ (the $\Hom$-space spanned by $\alpha$-fusion trees; see Section~\ref{sec:fusion-trees}) is projectively isomorphic to the Lawrence representation specialized at a root of unity $\mathcal{L}_{n,m,\alpha}^{r}$.
\end{theo}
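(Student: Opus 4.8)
\emph{Strategy.} Both objects are representations of the same abstract group $\Br_n$, realized through its two avatars: the quantum side uses the Artin presentation $\Br_n^{\sigma}$ acting on $\mathcal{H}_{n,m,\alpha}^r$ via the $R$-matrix braiding of $\Uq$ at the chosen root of unity, while the Lawrence side uses the mapping class group $\Br_n^{\Sigma} = \mathrm{MCG}(\Sigma_n)$ acting on $\mathcal{L}_{n,m,\alpha}^r$ by the monodromy of a rank-one local system over the configuration space of $m$ points in the $n$-punctured disc. The plan is to produce an explicit linear isomorphism of the underlying vector spaces matching a natural basis of $\alpha$-fusion trees to a homological basis, and then to check that the two braid actions agree on each Artin generator $\sigma_i$ up to a single scalar. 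That projective ambiguity is exactly the framing (ribbon twist) datum that the $R$-matrix records but the purely topological homological monodromy does not, so I expect the scalar to be the ribbon element $q^{\alpha(\alpha+2)}$ attached to the color $\alpha$.

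\emph{Key steps.} First I would identify $\mathcal{H}_{n,m,\alpha}^r$ with the weight space of $V_\alpha^{\otimes n}$ in which the total weight has dropped by $m$ from its maximal value, so that fixing the intermediate labels $\gamma_1,\dots,\gamma_{n-1}$ of an $\alpha$-fusion tree (Figure~\ref{fig:trees(a)}) is the same as fixing a standard recoupling basis vector; counting admissible labelings then yields $\dim \mathcal{H}_{n,m,\alpha}^r = \binom{n+m-2}{m}$ (matching $n-1$ in the reduced-Burau case $m=1$). Second, I would recall that the Lawrence representation has the same rank, with a basis of multifork (barcode) classes indexed by the same combinatorial data, and record the explicit dictionary between a sequence $(\gamma_i)$ and a multi-index counting lowering operators per strand. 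Third, I would invoke the generic-parameter comparison theorem of Kohno--Martel type, whose history is recorded in~\cite{Homological-Martel}, matching the $\Uq$-Verma weight-space representation with the Lawrence representation over $\Z[q^{\pm 1},t^{\pm 1}]$: one checks that the half-twist monodromy on a two-strand multifork reproduces the $R$-matrix eigenvalues on the corresponding two-strand fusion channel, and that the recoupling ($F$-move) change of basis coincides with the homological basis change. Comparing the two eigenvalue spectra, which differ by the ribbon scalar above, pins down the single projective factor per generator.

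\emph{Specialization and the main obstacle.} The generic statement lives over $\Z[q^{\pm 1},t^{\pm 1}]$, whereas the theorem requires the specialization $q \mapsto \zeta$ at the root of unity and $t \mapsto q^{2\alpha}$ dictated by the color. Because $\alpha$ is generic and the neglecton $V_\alpha$ remains irreducible for $\alpha \notin \Z$, the weight-space braiding specializes without collapse, so the quantum side stays well defined; symmetrically, one must verify that the specialized homology retains its expected (possibly truncated) rank, i.e.\ that the transition and Gram matrices relating the multifork basis to the dual noodle basis do not degenerate at $\zeta$ and that any truncation occurring at the $r$-th root of unity agrees on both sides. I expect this specialization control to be the crux of the argument: one must show that the generically defined isomorphism descends to the root of unity without the basis degenerating while keeping the projective scalar consistent across all $\sigma_1,\dots,\sigma_{n-1}$. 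Once non-degeneracy of the specialized pairing is established, the identification follows by comparing the two representations on the Artin generators, which closes the argument.
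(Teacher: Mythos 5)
Your overall shape (fusion trees $\leftrightarrow$ highest-weight data $\leftrightarrow$ multifork classes, via a Kohno--Martel type comparison) is the right one, but there are two concrete problems. First, your bookkeeping on the quantum side is wrong at a root of unity. The space $\mathcal{H}_{n,m,\alpha}^r = \Hom(V_{n\alpha+(n-1)(r-1)-2m}, V_\alpha^{\otimes n})$ is not the full weight space of $V_\alpha^{\otimes n}$ of the given weight: a morphism out of the simple module $V_{\gamma_{n-1}}$ is determined by the image of its highest weight vector, and that image must lie in $\ker E$, so the correct identification is with the highest-weight subspace $W_{n,m}=\mathrm{Ker}(K-q^{n(\alpha+r-1)-2m}\id)\cap\mathrm{Ker}\,E$, which is strictly smaller than the weight space. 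Moreover, the dimension is not $\binom{n+m-2}{m}$: at $q=\xi_r$ each fusion step admits only $r$ channels ($\gamma_{i+1}\in\gamma_i+\alpha+H_r$), so $\dim\mathcal{H}_{n,m,\alpha}^r=d_{n,m}^r=|E_{n,m}^r|$, the count of $(e_1,\dots,e_{n-1})$ with $\sum e_i=m$ \emph{and} $e_i\le r-1$; this is strictly smaller than $\binom{n+m-2}{m}$ as soon as $m\ge r$. Consistently, the theorem's target $\mathcal{L}_{n,m,\alpha}^r$ is the \emph{truncated} Lawrence representation, also of rank $d_{n,m}^r$; your claim that both sides have rank $\binom{n+m-2}{m}$ would only match the untruncated representation, which is not what the statement identifies.

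Second, and more seriously, the step you yourself call the crux --- descending the generic comparison over $\mathbb{Z}[\mathtt{x}^{\pm1},\mathtt{q}^{\pm1}]$ to the root of unity without degeneration, and matching the truncations on the two sides --- is flagged as something ``one must verify'' but is never carried out, so the proposal does not close. The paper's proof avoids this difficulty entirely: after the equivariant identification $\mathcal{H}_{n,m,\alpha}^r\cong W_{n,m}$ (equivariance holds because the braid action on a Hom element is determined by its value on the highest weight vector), it quotes \cite[Theorem~4.2]{Ito16} (more generally \cite[Corollary~7.1]{Homological-Martel}), a result already formulated \emph{at} the root of unity, which asserts that the braid representation on $W_{n,m}$ is projectively isomorphic to the truncated Lawrence representation specialized at $\mathtt{q}=-q^2$ and $\mathtt{x}=q^{-2(\alpha+r-1)}$. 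In other words, the specialization control you defer is precisely the content of the cited theorem; without either invoking it or proving it, your argument has a gap. A minor further point: the projective discrepancy in the paper comes from not correcting the $R$-matrix by $q^{-H\otimes H/2}$, so the relevant scalar is governed by the highest weight $\lambda=\alpha+r-1$ (compare the prefactor $q^{(\alpha+r-1)^2/2}$ in Ito's formula), not the ribbon eigenvalue $q^{\alpha(\alpha+2)}$ you propose.
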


The original quantum interpretation of the (truncated) Lawrence representations comes from certain highest weight subspaces of tensor powers of the Verma modules associated to quantum $\slt$. Our new quantum reinterpretation of these representations as fusion trees equips us with the powerful technique of \emph{graphical calculus}. Leveraging this perspective, we provide a new proof of Ito's homological formula for the colored Alexander invariants using graphical calculus.

\begin{theo}[Theorem~\ref{thm:Ito}, Ito's formula~\cite{Ito16}]
Let $K$ be a knot represented as a closure of an $n$-braid $\beta_n$. Then, the colored Alexander invariant formula is given by
\begin{equation}
N_r^{\alpha}(K)= q^{\frac{(\alpha+r-1)^2}{2}writhe(\beta_n)}\sum\limits_{m=0}^{(n-1)(r-1)}\md(n\alpha+(n-1)(r-1)-2m) \chi_{\mathcal{L}_{n,m,\alpha}^r}(\beta_n)
\end{equation}
where $\chi_{\mathcal{L}_{n,m,\alpha}^r}$ is the character of the Lawrence representation specialized at a root of unity $\mathcal{L}_{n,m,\alpha}^r$ (Definition~\ref{def:l2r}) and $\md(-)$ is the modified dimension (Equation~\eqref{eq:md}). 

In particular,
\begin{equation}
\Delta_K(q^{2\alpha-2})=\frac{q^{(\alpha+1) writhe(\beta_n)}}{\md(\alpha)}\sum\limits_{m=0}^{n-1}\md(n\alpha+n-1-2m) \chi_{\mathcal{L}_{n,m,\alpha}^2}(\beta_n) 
\end{equation}
where $\Delta_K(x)\in \mathbb{Z}[x,x^{-1}]$ is the Alexander polynomial of $K$. 
\end{theo}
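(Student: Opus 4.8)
The plan is to realize the colored Alexander invariant as a modified trace of the quantum braiding operator, to slice that trace along the fusion-tree (equivalently, weight-space) decomposition of $V_\alpha^{\otimes n}$, and then to invoke Theorem~\ref{thm:fusiontree-homological} to turn each slice into a Lawrence character. First I would recall the non-semisimple definition: coloring every strand of $\beta_n$ by the neglecton module $V_\alpha$, the operator $\rho(\beta_n)\in\End(V_\alpha^{\otimes n})$ is assembled from the $R$-matrix of the unrolled quantum $\slt$, and $N_r^\alpha(K)$ is (a normalization of) the modified trace $\mathsf{t}\bigl(\rho(\beta_n)\bigr)$; the modified trace is the essential ingredient, since it is nonzero precisely on the projective modules $V_\alpha$ whose ordinary quantum dimension vanishes.

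Next I would decompose $V_\alpha^{\otimes n}\cong\bigoplus_\gamma M_\gamma\otimes V_\gamma$ into its indecomposable summands (for generic $\alpha$ these are all typical). Since $\rho(\beta_n)$ is a morphism it acts as $f_\gamma\otimes\id_{V_\gamma}$ on each isotypic block, so the partial-trace property of the modified trace gives $\mathsf{t}\bigl(\rho(\beta_n)\bigr)=\sum_\gamma \md(\gamma)\,\Tr_{M_\gamma}(f_\gamma)$. The surviving summands are indexed by $m=0,\dots,(n-1)(r-1)$ with highest weight $\gamma=n\alpha+(n-1)(r-1)-2m$, which reproduces both the summation range and the coefficient $\md\bigl(n\alpha+(n-1)(r-1)-2m\bigr)$ in the stated formula; crucially, the multiplicity space $M_\gamma$ is exactly the $\alpha$-fusion-tree space $\mathcal{H}_{n,m,\alpha}^r$ of Section~\ref{sec:fusion-trees}. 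This last identification is what opens the door to graphical calculus, because it lets me compute each block trace $\Tr_{M_\gamma}(f_\gamma)$ combinatorially on fusion trees rather than on an abstract multiplicity space.

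Applying Theorem~\ref{thm:fusiontree-homological} identifies the braid action on each $\mathcal{H}_{n,m,\alpha}^r$ with the Lawrence representation $\mathcal{L}_{n,m,\alpha}^r$, so that $\Tr_{M_\gamma}(f_\gamma)=\chi_{\mathcal{L}_{n,m,\alpha}^r}(\beta_n)$ up to a scalar. Pinning down that scalar is where I expect the main obstacle to lie, since Theorem~\ref{thm:fusiontree-homological} supplies only a \emph{projective} isomorphism: each Artin generator $\sigma_i$ acts on the fusion side with an extra factor relative to the homological normalization of $\mathcal{L}_{n,m,\alpha}^r$. I would resolve this by tracking the ribbon twist of $V_\alpha$ crossing-by-crossing. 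Graphically, braiding two $V_\alpha$-strands produces, on top of the Lawrence matrix, a uniform scalar governed by the twist eigenvalue, contributing $q^{(\alpha+r-1)^2/2}$ per positive crossing; summing these scalars over the signed crossings of $\beta_n$ produces exactly the prefactor $q^{\frac{(\alpha+r-1)^2}{2}\mathrm{writhe}(\beta_n)}$ and upgrades the projective identity to the honest equality of the first displayed formula. The fusion-tree picture is precisely what makes this per-crossing bookkeeping transparent, which is the methodological payoff advertised in the introduction.

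Finally, for the specialization I would set $r=2$, so that $r-1=1$ collapses the summation range to $m=0,\dots,n-1$ and the weight to $n\alpha+n-1-2m$, recovering the sum in the second formula verbatim. To extract the Alexander polynomial I would invoke the classical fact that the reduced $(1,1)$-tangle version of the $r=2$ colored Alexander invariant is the Alexander polynomial evaluated at $x=q^{2\alpha-2}$; passing from the closed modified trace computed above to this reduced invariant accounts for the division by $\md(\alpha)$, while reconciling the framing anomaly of the reduced invariant against that of the closed one converts the twist exponent $\tfrac{(\alpha+1)^2}{2}$ into the $(\alpha+1)$ appearing in the second formula. The only genuinely new work beyond the first part is this normalization reconciliation at $r=2$, which is routine once the crossing-by-crossing scalar bookkeeping of the previous paragraph is in place.
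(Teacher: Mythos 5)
Your proposal is correct in substance and rests on the same two pillars as the paper's proof: the trace-decomposition formula $N_r^{\alpha}(K)=\sum_{\gamma}\md(\gamma)\,\chi_{\rho_\gamma}(\beta_n)$ (Theorem~\ref{thm:CGP-formula}) and the identification of $\rho_\gamma$ with $\mathcal{L}_{n,m,\alpha}^r$ up to a character of the braid group (Theorem~\ref{thm:fusiontree-homological}). Where you genuinely diverge is in how the first pillar is obtained. You derive it abstractly: decompose $V_\alpha^{\otimes n}\cong\bigoplus_{m}\mathcal{H}_{n,m,\alpha}^r\otimes V_{\gamma}$ (valid under the genericity assumption $\alpha,2\alpha,\dots,n\alpha\in\C\setminus\Z$, which you should state since the theorem needs it), let $\rho(\beta_n)$ act blockwise as $f_\gamma\otimes\id_{V_\gamma}$ by Schur's lemma, and use additivity and cyclicity of the modified trace to get $\mt(\rho(\beta_n))=\sum_\gamma\md(\gamma)\Tr_{M_\gamma}(f_\gamma)$. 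The paper instead runs this same computation diagrammatically on the cut-strand $(1,1)$-tangle, using the Pinch Move (Lemma~\ref{lemma:fusion-identity}), Mirror Reflection (Lemma~\ref{lemma:fusion-channel}), Braid Pop (Lemma~\ref{lemma:character-identity}), and Pruning (Lemma~\ref{lemma:isotoped-bubble}). The two arguments are equivalent; yours is the shorter, standard representation-theoretic one, while the paper's graphical route is deliberately chosen to showcase fusion-tree calculus (and its moves are reused in Section 5 for the special $\alpha$-fusion trees and the Hermitian pairing formula), so you lose that reusability but nothing logically.

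One imprecision is worth correcting. You attribute the per-crossing scalar $q^{(\alpha+r-1)^2/2}$ to ``the ribbon twist eigenvalue'' of $V_\alpha$. The twist of $V_\alpha$ acts by $q^{(\alpha^2-(r-1)^2)/2}$; that scalar is the \emph{framing} correction, which enters only in your final $r=2$ normalization step (where indeed $\tfrac{(\alpha+1)^2}{2}-\tfrac{\alpha^2-1}{2}=\alpha+1$). The per-crossing discrepancy between $\rho_\gamma$ and $\mathcal{L}_{n,m,\alpha}^r$ instead comes from the $q^{H\otimes H/2}$ prefactor of the $R$-matrix, which this paper keeps while Ito's homological normalization does not --- this is precisely why Theorem~\ref{thm:fusiontree-homological} is only a \emph{projective} isomorphism. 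Since two linear $\Br_n$-representations that are projectively isomorphic differ by an honest character of $\Br_n$, and all Artin generators are conjugate, that character is determined by its value $q^{(\alpha+r-1)^2/2}$ on $\sigma_1$, which yields the writhe prefactor; so your bookkeeping and final value are right, and only the named mechanism is off.
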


From a quantum information processing point of view, we want a formula that involves three primary ingredients: (1) fusion trees to encode quantum states characterizing information, (2) braids to process quantum information, and (3) a Hermitian form to measure and observe our processed information. We now mention that the (truncated) Lawrence representations have a limitation: it is not guaranteed that the topological intersection pairing will remain non-degenerate when specialized at a root of unity~\cite[Section 6]{Anghel23}. To reconcile this, Anghel introduced a new but isomorphic representation called the \emph{special Lawrence representation} where the specialization preserves the non-degeneracy of the intersection form. Using this version, Anghel derived a colored Alexander invariant formula given as the topological intersection pairing.

Inspired by Anghel's topological model, we define \emph{special $\alpha$-fusion trees} (see Figure~\ref{fig:trees}). Utilizing the non-degenerate Hermitian pairing of a Hermitian ribbon category, we derive an analogous version of Anghel's formula in the language of fusion trees useful for quantum computation.

\begin{theo}[Theorem~\ref{thm:special-pairing}]
Let $K$ be a knot represented as a closure of an $n$-braid $\beta_n$. Define
\begin{equation}
\mathcal{Y}:=\sum\limits_{\substack{\gamma_1 \in 2\alpha + H_r \\ \gamma_2 \in \gamma_1 + \alpha + H_r \\ \vdots \\ \gamma_{n-1} \in \gamma_{n-2} + \alpha + H_r \\ \mathbf{p}:=(\alpha,\gamma_1, \dots, \gamma_{n-1})}} \sqrt{\frac{\md(\gamma_{n-1})}{\langle Y^s_{\mathbf{p}}, Y^s_{\mathbf{p}} \rangle}}
\hackcenter{\begin{tikzpicture}[baseline=0, thick, scale=0.5, shift={(0,0)}]
\begin{scope}[shift={(1,-5.5)}]
\begin{scope}[decoration={markings, mark=at position 0.6 with {\arrow{>}}}]
    \draw[ultra thick, black, postaction=decorate] (0,4.5) to (0.8125,3.75);
    \draw[ultra thick, black, postaction=decorate] (1,4.5) to (0.8125,3.75);
    \draw[ultra thick, black, postaction=decorate] (0.8125,3.75) to (1.625,3);
    \draw[ultra thick, black, postaction=decorate] (2,4.5) to (1.625,3);
    \draw[ultra thick, black, dotted] (1.625, 3) to (2.4375, 2.25);
    \draw[ultra thick, black, postaction=decorate] (3, 4.5) to (2.4375, 2.25);
    \draw[ultra thick, black, postaction=decorate] (2.4375,2.25) to (3.25,1.5);
    \draw[ultra thick, black, postaction=decorate] (4, 4.5) to (3.25,1.5);
    \draw[ultra thick, black, postaction=decorate] (5, 4.5) to (4.0625,0.75);
    \draw[ultra thick, black, postaction=decorate] (3.25,1.5) to (4.0625,0.75);
    \draw[ultra thick, black, postaction=decorate] (6, 4.5) to (4.875,0);
    \draw[ultra thick, black, postaction=decorate] (4.0625,0.75) to (4.875,0);
    \draw[ultra thick, black, postaction=decorate] (7, 4.5) to (5.6875,-0.75);
    \draw[ultra thick, black, dotted] (4.875,0) to (5.6875,-0.75);
    \draw[ultra thick, black, postaction=decorate] (8, 4.5) to (6.5,-1.5);
    \draw[ultra thick, black, postaction=decorate] (5.6875,-0.75) to (6.5,-1.5);
    \draw[ultra thick, black, postaction=decorate] (6.5,-1.5) to (6.5,-2.5);
\end{scope}
\end{scope}
\draw (1,-0.8) node {$\scs \alpha$};
\draw (2,-0.8) node {$\scs \alpha$};
\draw (3,-0.8) node {$\scs \alpha$};
\draw (3.5,-1.25) node {$\scs \cdots$};
\draw (4,-0.8) node {$\scs \alpha$};
\draw (5,-0.8) node {$\scs \alpha$};
\draw (6,-0.8) node {$\scs -\alpha$};
\draw (7,-0.8) node {$\scs -\alpha$};
\draw (7.5,-1.25) node {$\scs \cdots$};
\draw (8,-0.8) node {$\scs -\alpha$};
\draw (9,-0.8) node {$\scs -\alpha$};
\draw (2.2,-2.5) node {$\scs \gamma_1$};
\draw (3.3,-4) node {$\scs \gamma_{n-2}$};
\draw (4.25,-4.75) node {$\scs \gamma_{n-1}$};
\draw (5.2,-5.6) node {$\scs \gamma_{n-2}$};
\draw (6.5,-7) node {$\scs \gamma_{1}$};
\draw (7.5,-8.2) node {$\scs \alpha$};
\end{tikzpicture} } 
\end{equation}
where $Y_{\mathbf{p}}^s$ denotes a special $\alpha$-fusion tree defined in Definition~\ref{def:special-fusion-tree}. Then, we have the following formula:
\begin{equation}
N_r^\alpha(K) = \langle \mathcal{Y}, (\beta_n \cup \mathbb{I}_{n-1})\mathcal{Y} \rangle
\end{equation}
where $\mathbb{I}_{n-1}$ is the identity braid word on $(n-1)$-strands.
\end{theo}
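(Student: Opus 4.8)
The plan is to reduce the right-hand pairing to the modified-trace definition of the colored Alexander invariant, using the graphical calculus developed for Theorem~\ref{thm:Ito}. The key structural observation is that each special $\alpha$-fusion tree $Y^s_{\mathbf{p}}$ of Figure~\ref{fig:trees} is a left-combed (ladder) tree in $\Hom(V_\alpha,\,V_\alpha^{\otimes n}\otimes V_{-\alpha}^{\otimes(n-1)})$ whose spine labels are \emph{symmetric}: the $n$ strands colored $\alpha$ fuse successively through $\gamma_1,\dots,\gamma_{n-1}$ to the middle charge $\gamma_{n-1}$, and the $n-1$ strands colored $-\alpha$ then fuse back in through $\gamma_{n-2},\dots,\gamma_1$ to the single $\alpha$-edge at the bottom. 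Because $\beta_n\cup\mathbb{I}_{n-1}$ braids only the $n$ strands colored $\alpha$, it acts inside the multiplicity space $\Hom(V_{\gamma_{n-1}},V_\alpha^{\otimes n})$: it preserves the middle charge $\gamma_{n-1}$ and the entire $-\alpha$ return segment, while possibly redistributing the descent labels $\gamma_1,\dots,\gamma_{n-2}$. Write $\rho(\beta_n)$ for this action.

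First I would expand $\langle\mathcal{Y},(\beta_n\cup\mathbb{I}_{n-1})\mathcal{Y}\rangle$ by sesquilinearity into a double sum over admissible tuples $\mathbf{p},\mathbf{p}'$, with $c_{\mathbf{p}}=\sqrt{\md(\gamma_{n-1})/\langle Y^s_{\mathbf{p}},Y^s_{\mathbf{p}}\rangle}$ the coefficient of $Y^s_{\mathbf{p}}$. Since the ladder trees are orthogonal for the Hermitian form, $\langle Y^s_{\mathbf{p}},(\beta_n\cup\mathbb{I}_{n-1})Y^s_{\mathbf{p}'}\rangle$ is nonzero only when the braided tree contains $Y^s_{\mathbf{p}}$ in the ladder basis; as the braid leaves the $-\alpha$ return segment fixed, matching that segment forces $\gamma_i=\gamma_i'$ for $i=1,\dots,n-2$, and preservation of the middle charge forces $\gamma_{n-1}=\gamma_{n-1}'$, collapsing the sum to the diagonal $\mathbf{p}=\mathbf{p}'$. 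The surviving term is the diagonal matrix element $(\rho(\beta_n))_{\mathbf{p}\mathbf{p}}$ times $\langle Y^s_{\mathbf{p}},Y^s_{\mathbf{p}}\rangle$, and since $|c_{\mathbf{p}}|^2\langle Y^s_{\mathbf{p}},Y^s_{\mathbf{p}}\rangle=\md(\gamma_{n-1})$, the pairing reduces to $\sum_{\mathbf{p}}\md(\gamma_{n-1})\,(\rho(\beta_n))_{\mathbf{p}\mathbf{p}}$.

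Next I would recognize this sum as a modified trace. Summing the diagonal entries over the descent labels $\gamma_1,\dots,\gamma_{n-2}$ at fixed $\gamma_{n-1}$ gives the ordinary trace of $\rho(\beta_n)$ on $\Hom(V_{\gamma_{n-1}},V_\alpha^{\otimes n})$, so $\langle\mathcal{Y},(\beta_n\cup\mathbb{I}_{n-1})\mathcal{Y}\rangle=\sum_{\gamma_{n-1}}\md(\gamma_{n-1})\,\tr(\rho(\beta_n)|_{\gamma_{n-1}})$. This is precisely the decomposition of $\mt_{V_\alpha^{\otimes n}}(\rho(\beta_n))$ over the total-charge sectors of $V_\alpha^{\otimes n}$, the weight $\md(\gamma_{n-1})$ being the modified dimension that replaces the vanishing quantum dimension of the neglecton $V_\alpha$. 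Diagrammatically, contracting the two $-\alpha$ return segments closes the $\alpha$-block into $\widehat{\beta_n}$, so the pairing is the renormalized Reshetikhin--Turaev evaluation of $\widehat{\beta_n}$ colored by $V_\alpha$, with the ribbon twists on the strands supplying the framing correction (hence no explicit writhe factor). By the modified-trace construction of the colored Alexander invariant, made graphically explicit in Theorem~\ref{thm:Ito}, this equals $N_r^\alpha(K)$.

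The main obstacle is the interface between the Hermitian pairing and the modified trace, taken together with the root-of-unity specialization. One must verify at the categorical level that contracting through the $-\alpha$ strands yields the $\md$-weighted \emph{ordinary} traces rather than the ordinary quantum trace, which vanishes on $V_\alpha$; this amounts to checking that the doubled cup--cap produced by the symmetric tree carries exactly the modified-dimension normalization. Equally essential is non-degeneracy of the Hermitian form after specialization, which guarantees that the self-pairings $\langle Y^s_{\mathbf{p}},Y^s_{\mathbf{p}}\rangle$ are nonzero and that $\mathcal{Y}$ is well defined. This is the very degeneracy that obliged Anghel to replace the truncated Lawrence representation by the special Lawrence representation; here it is resolved by confirming, via Theorem~\ref{thm:fusiontree-homological}, that the special $\alpha$-fusion trees $Y^s_{\mathbf{p}}$ are the fusion-tree images of Anghel's special basis, so that the non-degenerate Hermitian pairing corresponds to Anghel's non-degenerate intersection form and the computation is legitimate at the root of unity.
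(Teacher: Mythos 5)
Your core argument is correct and is essentially the paper's own proof: you collapse the double sum to its diagonal using the reflection symmetry of the special tree's fusion channels together with the fact that $\beta_n \cup \mathbb{I}_{n-1}$ braids only the $\alpha$-colored strands (this is exactly the role of Lemma~\ref{lemma:pairing} and Equation~\eqref{eq:orthogonal-pairing} in the paper), and you then identify the surviving sum $\sum_{\mathbf{p}} \md(\gamma_{n-1})\,(\rho(\beta_n))_{\mathbf{p}\mathbf{p}} = \sum_{\gamma_{n-1}} \md(\gamma_{n-1})\,\chi_{\rho_{\gamma_{n-1}}}(\beta_n)$ with $N_r^\alpha(K)$, which is the same identification the paper makes through Equation~\eqref{eq:special-alpha}.

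That said, your closing paragraph contains two claims that should be corrected rather than incorporated. First, the formula you need in the last step is Theorem~\ref{thm:CGP-formula}, not Theorem~\ref{thm:Ito}: your sum involves the fusion-tree characters $\chi_{\rho_\gamma}$, whereas Ito's formula is stated in terms of the Lawrence characters $\chi_{\mathcal{L}^r_{n,m,\alpha}}$ and differs by the framing factor $q^{\frac{(\alpha+r-1)^2}{2}writhe(\beta_n)}$; citing Theorem~\ref{thm:Ito} here would introduce a spurious writhe factor. Second, your treatment of well-definedness of $\mathcal{Y}$ is off on both counts. Non-degeneracy of the Hermitian form does \emph{not} by itself guarantee that the self-pairings $\langle Y^s_{\mathbf{p}}, Y^s_{\mathbf{p}}\rangle$ are nonzero: a non-degenerate form of mixed signature has isotropic vectors. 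What actually guarantees it is the explicit evaluation of the pairing by bubble pops, as in Lemma~\ref{lemma:pairing}, whose value is a nonzero product of the coefficients $C^{\alpha,\beta}_{\gamma}$ and inverse modified dimensions. And the proposed remedy --- identifying the $Y^s_{\mathbf{p}}$ with Anghel's special Lawrence basis via Theorem~\ref{thm:fusiontree-homological} --- is neither available nor needed: that theorem identifies the spaces $\mathcal{H}^r_{n,m,\alpha}$ of ordinary $\alpha$-fusion trees with the $q$-specialized Lawrence representations and says nothing about $\mathcal{H}^s$ or Anghel's special basis; the non-degenerate Hermitian pairing used here comes from the categorical statement Lemma~\ref{def:hermitian-pairing}, independently of any homological model. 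Excising that detour leaves your first two paragraphs as a complete and correct proof.
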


\subsection{Organization} In Section 2, we provide a brief survey of the Lawrence representations of the braid group. In Section 3, we review the unrolled quantum $\slt$ at an even root of unity and its category of representations. In Section 4, we study fusion trees coming from the generic part of the category and prove the key result that identifies the space of fusion trees with the Lawrence representations. Leveraging this identification, we prove Ito's formula in the language of graphical calculus. In Section 5, we incorporate Hermitian structures and extend the story of Section 4 that is parallel to Anghel's special Lawrence representations. In particular, we provide a non-semisimple quantum knot invariant formula suitable for the NSS TQC architecture.

\subsection{Acknowledgments} The author would like to thank Cristina Anghel, Christian Blanchet, Filippo Iulianelli, Aaron Lauda, Jules Martel, Martin Palmer, and Joshua Sussan for helpful discussions and advice. S.K. is supported by the NSF Graduate Research Fellowship DGE-1842487 and partially supported by the NSF grant DMS-2200419 and the Simons Foundation Collaboration grant on New Structures in Low-dimensional Topology.

\section{Homological Representations}
In this section, we review the Lawrence representations along with their truncations. 

The original Lawrence representations play a role in recovering invariants at generic $q$, such as the Jones polynomial~\cite{Big,Law}, while their truncations provide explicit basis for when we specialize at a root of unity. While these homological representations record information about braid groups acting on tensor products of finite-dimensional highest weight modules, Martel's extended Lawrence representations correspond to braid group representations on tensor powers of the quantum $\slt$ Verma modules~\cite{Homological-Martel}.

\subsection{Lawrence representations} First, we outline the construction of a homological representation $L_{n,m}$ called the \textit{Lawrence representation}. We refer the reader to~\cite{Lawrence} for the original introduction and~\cite{AnghelPalmer} for the general construction. The Lawrence representations recover the (reduced) Burau representations and the Lawrence-Krammer-Bigelow representations at $m=1$ and $m=2$ respectively.

First, denote the closed disc as $\mathbb{D}^2 =  \{ z \in \mathbb{C} : |z|\leq 1 \}$. Let $n,m \in \mathbb{Z}_{>0}$ and consider the $n$-punctured disc 
\begin{equation*}
\Sigma_n = \mathbb{D}^2\setminus \{ p_1, \dots, p_n\}
\end{equation*}
where $p_1, \dots, p_n$ are $n$ distinct points in the interior of $\mathbb{D}^2$. Without loss of generality, one may place each puncture on the real line so that $-1 < p_1 < \dots < p_n < 1$.

The \textit{ordered configuration space} of $m$ points on the $n$-punctured disc is defined as $\Sigma_n^{\times m} \setminus \Delta$ where $\Delta = \{ (x_1, \dots, x_m) \in \Sigma_n^{\times m} : x_i=x_j \text{ for some } i \neq j \}$. Then, the \textit{unordered configuration space} of $m$ points on the $n$-punctured disc is defined as 
\begin{equation*}C_{n,m} = (\Sigma_n^{\times m} \setminus \Delta)/\mathfrak{S}_m
\end{equation*}where $\mathfrak{S}_m$ is the symmetric group acting on the permutation of the indices.

We fix a basepoint $d=(d_1, \dots, d_m) \in C_{n,m}$ such that $d_1, \dots, d_m \in \partial\Sigma_n$. The Hurewicz map $\rho: \pi_1(C_{n,m}) \rightarrow H_1(C_{n,m})$ tells us that, for $m\geq 2$, the first homology group of the configuration space is $H_1(C_{n,m}) \cong \langle \rho(\sigma_1), \dots, \rho(\sigma_{n})\rangle \oplus \langle \rho(\delta)\rangle \cong \mathbb{Z}^n \oplus \mathbb{Z}$. Here, $\sigma_i \in \pi_1(C_{n,m})$ is represented by the loop in the configuration space with $(m-1)$ fixed components and the first one going on a loop in $\Sigma_n$ around puncture $p_i$ and $\delta \in \pi_1(C_{n,m})$ is represented by a loop in the configuration space given by $(m-2)$ constant points and the first two components making a circle, which swaps the two initial points (see Figure~\ref{fig:fundamental-group}).

\begin{figure}[htp!]  
    \centering
\begin{tikzpicture}[baseline=0, thick, scale=0.75, shift={(0,0)}]
    \draw (0,0) circle(3);
    \draw (-2.5,0) node {$\scs \bullet$};
    \draw (-1.5,0) node {$\scs \bullet$};
    \draw (-0.75,0) node {$\scs \cdots$};
    \draw (0,0) node {$\scs \bullet$};
    \draw (1,0) node {$\scs \cdots$};
    \draw (2,0) node {$\scs \bullet$};
    \draw (-2.5,0) node[above] {$\scs p_1$};
    \draw (-1.5,0) node[above] {$\scs p_2$};
    \draw (0,0) node[above] {$\scs p_i$};
    \draw (2,0) node[above] {$\scs p_n$};
    \draw ({3*cos(220)}, {3*sin(220)}) node {$\scs \bullet$};
    \draw ({3*cos(240)}, {3*sin(240)}) node {$\scs \bullet$};
    \draw ({3*cos(300)}, {3*sin(300)}) node {$\scs \bullet$};
    \draw (0,-2.9) node {$\scs \cdots$};
    \draw ({3*cos(220)}, {3*sin(220)}) node[left] {$\scs d_1$};
    \draw ({3*cos(240)}, {3*sin(240)}) node[left] {$\scs d_2$};
    \draw ({3*cos(300)}, {3*sin(300)}) node[right] {$\scs d_m$};
    \begin{scope}[decoration={markings, mark=at position 0.75 with {\arrow{>}}}]
        \draw[postaction=decorate, thin] ({3*cos(220)}, {3*sin(220)}) to [out=0, in=-90] (0.5,0);
    \end{scope}
    \draw[thin] (0.5,0) to [out=90, in=0] (0,1);
    \draw[thin] (0,1) to [out=180, in=90] (-0.5,0);
    \draw[thin] ({3*cos(220)}, {3*sin(220)}) to [out=0, in=-90] (-0.5,0);
    \draw (0,-1.25) node[below] {$\scs \sigma_i$};
\end{tikzpicture}
\qquad \qquad \qquad \qquad
\begin{tikzpicture}[baseline=0, thick, scale=0.75, shift={(0,0)}]
    \draw (0,0) circle(3);
    \draw (-2.5,0) node {$\scs \bullet$};
    \draw (-1.5,0) node {$\scs \bullet$};
    \draw (-0.75,0) node {$\scs \cdots$};
    \draw (0,0) node {$\scs \bullet$};
    \draw (1,0) node {$\scs \cdots$};
    \draw (2,0) node {$\scs \bullet$};
    \draw (-2.5,0) node[above] {$\scs p_1$};
    \draw (-1.5,0) node[above] {$\scs p_2$};
    \draw (0,0) node[above] {$\scs p_i$};
    \draw (2,0) node[above] {$\scs p_n$};
    \draw ({3*cos(220)}, {3*sin(220)}) node {$\scs \bullet$};
    \draw ({3*cos(240)}, {3*sin(240)}) node {$\scs \bullet$};
    \draw ({3*cos(300)}, {3*sin(300)}) node {$\scs \bullet$};
    \draw (0,-2.9) node {$\scs \cdots$};
    \draw ({3*cos(220)}, {3*sin(220)}) node[left] {$\scs d_1$};
    \draw ({3*cos(240)}, {3*sin(240)}) node[left] {$\scs d_2$};
    \draw ({3*cos(300)}, {3*sin(300)}) node[right] {$\scs d_m$};
    \begin{scope}[decoration={markings, mark=at position 0.75 with {\arrow{>}}}]
        \draw[postaction=decorate,thin] ({3*cos(220)}, {3*sin(220)}) to [out=0, in=90] (-3/2, {3*sin(240)});
        \draw[thin] ({3*cos(220)}, {3*sin(220)}) to [out=45, in=180] (-0.5, -1);
        \draw[postaction=decorate,thin] ({3*cos(240)}, {3*sin(240)}) to [out=45, in=0] (-0.5, -1);
    \end{scope}
    \draw (0,-1.25) node[below] {$\scs \delta$};
\end{tikzpicture}
  \caption{Visual depiction of the loops $\sigma_i$ and $\delta$ in $\pi_1(C_{n,m})$.}  \label{fig:fundamental-group}
\end{figure}
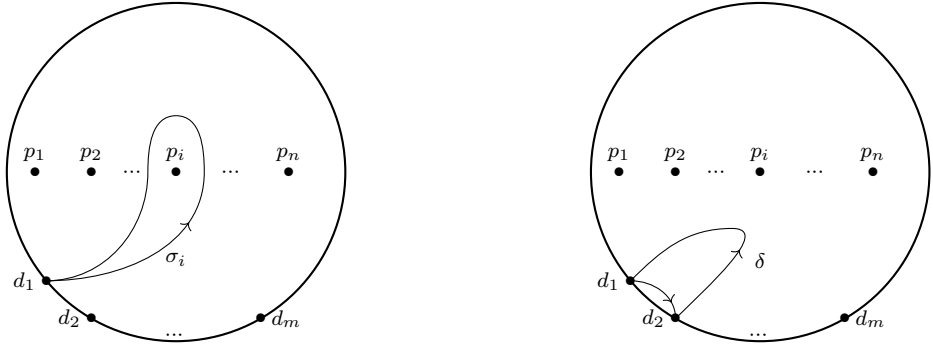

Consider the function $\epsilon: H_1(C_{n,m}) \cong\mathbb{Z}^n \oplus \mathbb{Z} \rightarrow \mathbb{Z} \oplus \mathbb{Z}$ that sends $(\mathtt{x}_1, \dots, \mathtt{x}_n,\mathtt{q}) \mapsto (\mathtt{x}_1 + \dots + \mathtt{x}_n, \mathtt{q})$. Now, define the local system $\phi$ as the composition of these two maps 
\begin{equation*}\phi: \pi_1(C_{n,m}) \xrightarrow{\rho} H_1(C_{n,m})\cong \mathbb{Z}^n \oplus \mathbb{Z} \xrightarrow{\epsilon} \mathbb{Z} \oplus \mathbb{Z}=\langle \mathtt{x} \rangle \oplus \langle \mathtt{q} \rangle.
\end{equation*}
Let $\pi: \tilde{C}_{n,m} \rightarrow C_{n,m}$ be the covering associated to $\ker \phi$. By construction, the group of deck transformations on this covering is $\operatorname{Deck}(\tilde{C}_{n,m})\cong \pi_1(C_{n,m})/\ker \phi \cong \langle \mathtt{x} \rangle \oplus \langle \mathtt{q} \rangle $. For the rest of the construction, we fix a basepoint $\tilde{d}\in \pi^{-1}(d)$ in $\tilde{C}_{n,m}$.

Recall that the braid group is the mapping class group of the punctured disc relative to its boundary $\Br_n^\Sigma = \operatorname{MCG}(\Sigma_n)$. Consequently, we have a braid group action on the configuration space $C_{n,m}$ which further induces an action on the fundamental group $\pi_1 (C_{n,m})$. In particular, one may observe that the braid group action is invariant under $\ker \phi$.

\begin{proposition}{\cite{ItoG, KT, Lawrence}}
The local system $\phi$ is invariant under the $\Br_n$-action and it can be lifted to an action onto the homology of the covering. Moreover, this action is compatible with the action of the deck transformations, yielding a homological representation of the braid group:
\begin{equation*}
\Br_n \rightarrow GL(H_m^{lf}(\tilde{C}_{n,m}, \mathbb{Z}))
\end{equation*}
where the homology of locally-finite chains $H_m^{lf}(\tilde{C}_{n,m}, \mathbb{Z})$ is a $\mathbb{Z}[\mathtt{x}^{\pm 1}, \mathtt{q}^{\pm 1}]$-module and the braid group acts as a $\mathbb{Z}[\mathtt{x}^{\pm 1}, \mathtt{q}^{\pm 1}]$-module automorphism.
\end{proposition}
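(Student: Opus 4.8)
The plan is to treat the three assertions in order of depth: the genuinely geometric content is the invariance of $\phi$ under the $\Br_n$-action on $\pi_1(C_{n,m})$, while the lift to the homology of the covering and its deck-equivariance then follow formally from covering space theory once invariance is in hand.

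First I would reduce invariance to a computation in $H_1$. Since $\phi=\epsilon\circ\rho$ with $\rho$ the Hurewicz map, naturality gives, for a mapping class $f$ and $\gamma\in\pi_1(C_{n,m},d)$, the identity $\phi(f_*\gamma)=\epsilon(f_*\rho(\gamma))$, where I write $f_*$ also for the induced automorphism of $H_1(C_{n,m})\cong\langle\rho(\sigma_1),\dots,\rho(\sigma_n)\rangle\oplus\langle\rho(\delta)\rangle$. It then suffices to show $\epsilon\circ f_*=\epsilon$. I would read $\mathtt{x}=\epsilon(\rho(\sigma_i))$ as the total winding of the configuration points about the punctures and $\mathtt{q}=\epsilon(\rho(\delta))$ as their mutual winding, and argue that $f$ permutes $\rho(\sigma_1),\dots,\rho(\sigma_n)$ by the underlying permutation of the braid while fixing $\rho(\delta)$. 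The total winding is preserved because $f$ extends to a homeomorphism of the annulus obtained by deleting a neighborhood of the puncture cluster and hence acts trivially on the $H_1$ of that annulus; the mutual winding is preserved because $f$ is orientation preserving. Applying $\epsilon$ collapses the permutation (every $\rho(\sigma_i)\mapsto\mathtt{x}$) and keeps $\mathtt{q}$, giving $\phi\circ f_*=\phi$. It is enough to verify this on the Artin half-twists, reducing the check to the local model of two adjacent punctures.

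Granting invariance, I would build the representation as follows. Invariance gives $f_*(\ker\phi)=\ker\phi$, so the lifting criterion applies: since $\pi_1(\tilde{C}_{n,m})=\ker\phi$ is preserved and $f$ fixes the boundary basepoint $d$, each $f$ lifts uniquely to a homeomorphism $\tilde{f}\colon\tilde{C}_{n,m}\to\tilde{C}_{n,m}$ normalized by $\tilde{f}(\tilde{d})=\tilde{d}$. Being a homeomorphism, $\tilde{f}$ induces an automorphism $\tilde{f}_*$ of the locally finite homology $H_m^{lf}(\tilde{C}_{n,m},\mathbb{Z})$. Uniqueness of the normalized lift makes $f\mapsto\tilde{f}$ multiplicative and sends isotopic mapping classes to homotopic lifts, hence to the same map on homology; thus $\Br_n\to GL(H_m^{lf}(\tilde{C}_{n,m},\mathbb{Z}))$ is a well-defined group homomorphism.

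Finally, for the $\mathbb{Z}[\mathtt{x}^{\pm 1},\mathtt{q}^{\pm 1}]$-linearity I would exploit that $\ker\phi$ is normal, so the covering is regular and $\operatorname{Deck}(\tilde{C}_{n,m})\cong\pi_1(C_{n,m})/\ker\phi$; the conjugation $g\mapsto\tilde{f}g\tilde{f}^{-1}$ on the deck group is exactly the automorphism of $\pi_1(C_{n,m})/\ker\phi$ induced by $f_*$. Because $\phi$ descends to an isomorphism $\pi_1(C_{n,m})/\ker\phi\xrightarrow{\sim}\langle\mathtt{x}\rangle\oplus\langle\mathtt{q}\rangle$ and $\phi\circ f_*=\phi$, this induced automorphism is the identity, so $\tilde{f}$ commutes with every deck transformation and $\tilde{f}_*$ is $\mathbb{Z}[\operatorname{Deck}]=\mathbb{Z}[\mathtt{x}^{\pm 1},\mathtt{q}^{\pm 1}]$-linear. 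The hard part is the $H_1$-level invariance in the first step — in particular, verifying that a half-twist carries $\rho(\sigma_i)$ to $\rho(\sigma_{\pi(i)})$ with no cross-term along $\rho(\delta)$, which is what ensures $\mathtt{q}$ (and not merely $\mathtt{x}$) is preserved; this relies on the configuration basepoint lying on $\partial\Sigma_n$.
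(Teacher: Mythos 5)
Your proof is correct; since the paper itself only cites this proposition (to Lawrence, Kassel--Turaev, and Ito) rather than proving it, there is no in-paper argument to compare against, but yours is precisely the standard covering-space proof from those references --- Hurewicz naturality plus the permutation action on $H_1(C_{n,m})$ for invariance (with the boundary position of the basepoints $d_j$ correctly invoked to exclude $\rho(\delta)$ cross-terms, which is the one genuinely delicate point), the lifting criterion with basepoint-normalized lifts for the action upstairs, and deck-group conjugation together with $\phi\circ f_*=\phi$ for the $\mathbb{Z}[\mathtt{x}^{\pm 1},\mathtt{q}^{\pm 1}]$-linearity. One small refinement: locally finite homology $H_m^{lf}$ is functorial only for proper maps and invariant only under proper homotopies, so your step ``isotopic mapping classes give homotopic lifts, hence the same map on homology'' should say \emph{properly} homotopic --- this is harmless here, since the lifts are homeomorphisms and a lifted ambient isotopy $(x,t)\mapsto(\tilde f_t(x),t)$ is a homeomorphism of $\tilde{C}_{n,m}\times[0,1]$, so the resulting homotopy is automatically proper.
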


\begin{remark}
The Borel-Moore homology $H_\bullet^{BM}(\tilde{C}_{n,m}, \mathbb{Z})$ and the homology of locally finite chains $H_\bullet^{lf}(\tilde{C}_{n,m},\mathbb{Z})$ on the covering space $\tilde{C}_{n,m}$ are isomorphic. We refer the reader to the Appendix of~\cite{Homological-Martel} for facts and properties. 
\end{remark}

\begin{remark}
The case when $m=1$ have a homologous construction, except we do not have the loop $\delta$ in $\pi_1(C_{n,1})$ which eradicates the $\mathtt{q}$ variable in $\mathbb{Z}[\mathtt{x}^{\pm 1}, \mathtt{q}^{\pm 1}]$.
\end{remark}

The Lawrence representation is defined as a $\mathbb{Z}[\mathtt{x}^{\pm 1}, \mathtt{q}^{\pm 1}]$-submodule $\mathcal{H}_{n,m} \subset H^{lf}_{m}(\tilde{C}_{n,m}, \mathbb{Z})$ generated by certain homology classes represented by geometric objects called \textit{multiforks}~\cite{Anghel23, Ito16}.

The subspace $\mathcal{H}_{n,m}$ has a natural basis which we briefly introduce here. First, define an indexing set
\begin{equation*}
E_{n,m}=\{ \mathbf{e}=(e_1, \dots, e_{n-1})\in \mathbb{Z}_{\geq 0}^{n-1} \mid e_1+\dots +e_{n-1} = m \}.
\end{equation*}
Denote the cardinality of this indexing set as 
\begin{equation*}
d_{n,m}=|E_{n,m}|=\binom{n+m-2}{m}.
\end{equation*}
Then, for each $\mathbf{e} \in E_{n,m}$, we can assign a multifork $\mathbb{F}_\mathbf{e} = \{ F_{1}, \dots, F_{m}\}:\Sigma_n^{\times m} \rightarrow C_{n,m}$ which we call the \emph{standard multifork} or the \emph{multifork associated to the partition $\mathbf{e}$}. Each standard multifork $\mathbb{F}_{\mathbf{e}}$ is associated to a homology class $[\tilde{\mathbb{F}}_{\mathbf{e}}] \in \mathcal{H}_{n,m}$. By abuse of notation, we use the notation $\mathbb{F}_\mathbf{e}$ to represent both the multifork and the homology class $[\tilde{\mathbb{F}}_{\mathbf{e}}]$. The set of all homology classes associated to standard multiforks $\{ \mathbb{F}_{\mathbf{e}} \}_{\mathbf{e} \in E_{n,m}}$ forms a free basis on $\mathcal{H}_{n,m}$.

\begin{proposition}{\cite{ItoG, KT, Lawrence}}
The subspace $\mathcal{H}_{n,m}$ is invariant under the action of the braid group and the basis $\{ \mathbb{F}_{\mathbf{e}} \}_{\mathbf{e} \in E_{n,m}}$ leads to a homological representation of the braid group
\begin{equation*}
L_{n,m}: \Br_n \rightarrow GL(\mathcal{H}_{n,m})=GL\left(d_{n,m};\mathbb{Z}[\mathtt{x}^{\pm 1},\mathtt{q}^{\pm 1}]\right)
\end{equation*}
called the \emph{Lawrence representation}.
\end{proposition}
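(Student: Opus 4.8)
The plan is to build on the lifted braid action furnished by the previous proposition, and then to treat the two genuinely distinct assertions --- invariance of the submodule $\mathcal{H}_{n,m}$ and the basis property of the standard multiforks --- by largely separate arguments, after which the existence of the representation $L_{n,m}$ is formal.

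First I would record the homological action concretely. A braid $\beta \in \Br_n$ is represented by a homeomorphism of $\Sigma_n$ fixing $\partial\Sigma_n$ pointwise; this induces a homeomorphism of the configuration space $C_{n,m}$ fixing the basepoint $d \in \partial\Sigma_n$, and hence an automorphism of $\pi_1(C_{n,m},d)$. Since the previous proposition guarantees that $\phi$ is $\Br_n$-invariant, the homeomorphism lifts to $\tilde{C}_{n,m}$ fixing the chosen lift $\tilde d$ and commuting with the deck group $\langle \mathtt{x}\rangle \oplus \langle \mathtt{q}\rangle$. Consequently $\beta$ acts as a $\mathbb{Z}[\mathtt{x}^{\pm 1},\mathtt{q}^{\pm 1}]$-linear automorphism of $H_m^{lf}(\tilde{C}_{n,m},\mathbb{Z})$, so once the $\mathbb{F}_{\mathbf{e}}$ are known to form a basis the matrix of $\beta$ will have entries in $\mathbb{Z}[\mathtt{x}^{\pm 1},\mathtt{q}^{\pm 1}]$, and the assignment $\beta \mapsto L_{n,m}(\beta)$ is a group homomorphism by functoriality of the mapping class group action.

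For invariance, the key step is to show that the image $\beta\cdot\mathbb{F}_{\mathbf{e}}$ of a standard multifork is again a $\mathbb{Z}[\mathtt{x}^{\pm 1},\mathtt{q}^{\pm 1}]$-linear combination of the $\{\mathbb{F}_{\mathbf{e}'}\}$. Geometrically $\beta$ transports each tine of the multifork to a new arc in $\Sigma_n$, and I would push these image arcs back toward the standard position near the real axis by an explicit isotopy, reading off the power of $\mathtt{x}$ or $\mathtt{q}$ produced each time a strand crosses a puncture or passes another strand. The resulting identity expresses $\beta\cdot\mathbb{F}_{\mathbf{e}}$ as a finite sum over $E_{n,m}$ with coefficients in $\mathbb{Z}[\mathtt{x}^{\pm 1},\mathtt{q}^{\pm 1}]$, which is exactly $\Br_n$-stability of $\mathcal{H}_{n,m}$. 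Alternatively, and more cleanly, $\mathcal{H}_{n,m}$ admits an intrinsic description as a natural $\Br_n$-stable submodule of $H_m^{lf}(\tilde{C}_{n,m},\mathbb{Z})$ determined by the boundary behavior of cycles; since every braid fixes $\partial\Sigma_n$ pointwise, invariance is then automatic and the explicit isotopy bookkeeping can be avoided.

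The genuine technical heart, and the step I expect to be the main obstacle, is proving that $\{\mathbb{F}_{\mathbf{e}}\}_{\mathbf{e}\in E_{n,m}}$ is a free basis. I would establish linear independence via a Bigelow-style pairing: introduce a dual family of geometric cycles (noodles/barcodes) $\{\mathbb{N}_{\mathbf{e}'}\}_{\mathbf{e}'\in E_{n,m}}$, dual to the multiforks under Poincar\'e--Lefschetz duality of the configuration space, together with the resulting intersection pairing valued in $\mathbb{Z}[\mathtt{x}^{\pm 1},\mathtt{q}^{\pm 1}]$. Choosing a suitable total order on $E_{n,m}$ (for instance lexicographic on the partition data), the core computation is to show that the pairing matrix $\big(\langle \mathbb{F}_{\mathbf{e}}, \mathbb{N}_{\mathbf{e}'}\rangle\big)_{\mathbf{e},\mathbf{e}'}$ is triangular with diagonal entries that are units (monomials) in $\mathbb{Z}[\mathtt{x}^{\pm 1},\mathtt{q}^{\pm 1}]$; such unitriangularity forces the $\mathbb{F}_{\mathbf{e}}$ to be linearly independent. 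For spanning --- hence that $\mathcal{H}_{n,m}$ is free of rank $d_{n,m}$ --- I would use a handle decomposition of $C_{n,m}$ adapted to the puncture line to compute $H_m^{lf}(\tilde{C}_{n,m},\mathbb{Z})$ and match the number of top-dimensional cells with $|E_{n,m}|=\binom{n+m-2}{m}$, each cell being represented by a standard multifork. The delicate part is the monodromy bookkeeping in the intersection computation --- tracking exactly which powers of $\mathtt{x}$ and $\mathtt{q}$ appear as tines wind around punctures and past one another --- which is what makes the unitriangularity claim, and therefore the whole basis statement, require the most care.
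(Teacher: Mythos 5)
The paper does not actually prove this proposition: it is imported from the literature, with the proof deferred to the cited sources (Lawrence, Kassel--Turaev, Ito). So there is no in-paper argument to compare against; what you have reconstructed is, in outline, the standard proof from those references, and it is essentially sound --- the lifted, deck-equivariant action on $H_m^{lf}(\tilde{C}_{n,m},\mathbb{Z})$ comes from the previous proposition, and invariance plus the basis statement then yield the matrix representation formally. Two caveats are worth flagging. First, since $\mathcal{H}_{n,m}$ is \emph{defined} as the $\mathbb{Z}[\mathtt{x}^{\pm 1},\mathtt{q}^{\pm 1}]$-span of the classes $\mathbb{F}_{\mathbf{e}}$, ``spanning'' is vacuous; the genuine content is (a) linear independence of the $\mathbb{F}_{\mathbf{e}}$ and (b) $\Br_n$-stability of their span. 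Your handle-decomposition computation of $H_m^{lf}(\tilde{C}_{n,m},\mathbb{Z})$ is therefore not needed for the proposition as literally stated, but it is exactly what makes your cleaner route to invariance work: once one knows the multiforks span an intrinsically characterized module (as in Martel's or Anghel--Palmer's computation of the Borel--Moore homology of the covering), stability under the mapping class group action is automatic, and this is how the modern references argue; otherwise one is stuck with the explicit isotopy bookkeeping of your first route. Second, your unitriangularity claim is stronger than what is true or needed: in the fork/noodle (multifork/barcode) pairing the diagonal entries are in general \emph{not} units of $\mathbb{Z}[\mathtt{x}^{\pm 1},\mathtt{q}^{\pm 1}]$ --- they are products of cyclotomic-type factors --- but since the coefficient ring is an integral domain, a triangular pairing matrix with nonzero diagonal entries already forces linear independence over that ring, which is all the argument requires.
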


\subsection{Truncated Lawrence representations} 
First, consider the subspace $\mathcal{H}_{n,m}^{\geq r}$ of $\mathcal{H}_{n,m}$ spanned by $\{ \mathbb{F}_{\mathbf{e}} \}_{\mathbf{e} \in E_{n,m}^{\geq r}}$ where 
\begin{equation*}
E_{n,m}^{\geq r} = \{ \mathbf{e}=(e_1, \dots, e_{n-1})\in E_{n,m} \mid e_i \geq r \text{ for some } i \} \subset E_{n,m}.
\end{equation*}
Next, define $\mathcal{H}_{n,m}^r = \mathcal{H}_{n,m}/\mathcal{H}_{n,m}^{\geq r}$. By abuse of notation again, we use $\mathbb{F}_{\mathbf{e}}$ to denote a standard multifork $\pi(\mathbb{F}_{\mathbf{e}})$ under the quotient $\pi: \mathcal{H}_{n,m} \rightarrow \mathcal{H}_{n,m}^r$. We define another indexing set
\begin{equation*}
E_{n,m}^{r} = \{ \mathbf{e}=(e_1, \dots, e_{n-1})\in E_{n,m} \mid e_i \leq r - 1 \text{ for all } i \} \subset E_{n,m}
\end{equation*}
and denote $d_{n,m}^r=|E_{n,m}^r|$.

\begin{proposition}{\cite[Proposition-Definition~3.2]{Ito16}}
When $\mathtt{q}$ specialized to $-e^{\frac{2\pi \sqrt{-1}}{r}}$, the space $\mathcal{H}_{n,m}^{\geq r}$ is a $\Br_n$-invariant subspace of $\mathcal{H}_{n,m}$. Thus, we have the following braid group representation on the quotient
\begin{equation*}
l_{n,m}: \Br_n \rightarrow GL(\mathcal{H}_{n,m}^{r}) = GL(d_{n,m}^r ; \mathbb{Z}[\mathtt{x}^{\pm 1}])
\end{equation*}
called the \emph{truncated Lawrence representation}. 
\end{proposition}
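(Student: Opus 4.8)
The plan is to establish the single assertion that $\mathcal{H}_{n,m}^{\geq r}$ is a $\Br_n$-submodule once $\mathtt{q} = -e^{2\pi\sqrt{-1}/r}$; the existence of the quotient representation $l_{n,m}$ on $\mathcal{H}_{n,m}^r = \mathcal{H}_{n,m}/\mathcal{H}_{n,m}^{\geq r}$ is then automatic, with basis $\{\mathbb{F}_{\mathbf{e}}\}_{\mathbf{e}\in E_{n,m}^r}$ of rank $d_{n,m}^r$. Since $\Br_n$ is generated by $\sigma_1, \dots, \sigma_{n-1}$, it suffices to verify $\sigma_i^{\pm 1}\bigl(\mathcal{H}_{n,m}^{\geq r}\bigr)\subseteq \mathcal{H}_{n,m}^{\geq r}$ for each $i$. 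I would begin from the explicit matrix of $\sigma_i$ in the standard multifork basis recorded in~\cite{Ito16, AnghelPalmer, KT}, whose entries are Laurent monomials in $\mathtt{x}$ and $\mathtt{q}$ weighted by $\mathtt{q}$-binomial coefficients $\qbin{a}{b}_{\mathtt{q}}$. I emphasize at the outset that over generic $\mathtt{q}$ the subspace $\mathcal{H}_{n,m}^{\geq r}$ is \emph{not} invariant, so the argument must genuinely exploit the arithmetic of the specialization.

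Writing the multifork basis as the disjoint union of the ``high'' indices $E_{n,m}^{\geq r}$ and the ``low'' indices $E_{n,m}^{r}$, the submodule condition is precisely that, for each generator, every transition coefficient from a high source $\mathbf{e}$ (some $e_i \geq r$) to a low target $\mathbf{e}'$ (all $e_j' \leq r-1$) vanishes; equivalently, each $\sigma_i^{\pm 1}$ is block triangular with respect to the filtration $\mathcal{H}_{n,m}^{\geq r}\subseteq \mathcal{H}_{n,m}$. Because the half-twist $\sigma_i$ is supported in a disc enclosing only $p_i$ and $p_{i+1}$, it fixes all homological data carried by forks outside this disc and merely recombines the forks meeting it; this locality reduces the vanishing to a two-puncture computation in which the total occupation number of the affected regions is preserved. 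Under the quantum--homological dictionary $\mathtt{x}=q^{2\alpha}$, $\mathtt{q}=q$, this local half-twist is the braiding on a weight space of $M_\alpha\otimes M_\alpha$ over $\slt$ written in the divided-power basis, which makes the $\mathtt{q}$-binomial form of the coefficients transparent.

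The decisive input is the root-of-unity vanishing. At $\mathtt{q} = -e^{2\pi\sqrt{-1}/r}$ one has $\mathtt{q}^{2r}=1$, hence $[r]_{\mathtt{q}}=0$ while $[k]_{\mathtt{q}}\neq 0$ for $0<k<r$; consequently $\qbin{r}{j}_{\mathtt{q}} = [r]_{\mathtt{q}}!\,/\,\bigl([j]_{\mathtt{q}}!\,[r-j]_{\mathtt{q}}!\bigr)=0$ for all $0<j<r$, and more generally the quantum-Lucas phenomenon forces $\qbin{a}{b}_{\mathtt{q}}=0$ whenever reducing a coordinate from the range $\geq r$ into the range $\leq r-1$ requires a carry in base $r$. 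I would check that each high-to-low transition coefficient carries exactly such a vanishing factor, so that $\sigma_i^{\pm 1}$ cannot move a class out of $\mathcal{H}_{n,m}^{\geq r}$. This yields the submodule property and hence the truncated representation $l_{n,m}$; as $\mathtt{q}$ is now a fixed scalar, the surviving entries are Laurent polynomials in $\mathtt{x}$, giving the target $GL(d_{n,m}^r;\mathbb{Z}[\mathtt{x}^{\pm 1}])$.

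The main obstacle is the precise identification and vanishing of the high-to-low transition coefficients. One route is purely geometric: track the $\mathtt{x}$- and $\mathtt{q}$-weights that forks accumulate as they wind around $p_i, p_{i+1}$ and slide past one another under the half-twist, then read off the governing $\mathtt{q}$-binomials and apply the specialization. A second route transports the statement across the dictionary and deduces invariance from the submodule structure of $M_\alpha^{\otimes n}$ at a $2r$-th root of unity, where the divided power $F^{(r)}$ generates the relevant $\slt$-submodule that matches $\mathcal{H}_{n,m}^{\geq r}$. I expect the geometric bookkeeping to be the delicate point, since one must confirm that the specialization annihilates exactly the escaping terms---no more and no fewer---for every generator simultaneously.
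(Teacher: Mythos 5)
You should know at the outset that the paper does not prove this proposition at all: it is imported verbatim from Ito \cite[Proposition-Definition~3.2]{Ito16}, and the only commentary the paper adds is the remark that follows it, recording that by \cite[Corollary~4.10]{Homological-Martel} the multiforks indexed by $E_{n,m}^{\geq r}$ already vanish in Borel--Moore homology at these specializations, so that $\Br_n$-invariance of $\mathcal{H}_{n,m}^{\geq r}$ (and hence the quotient representation) can be obtained without any matrix computation. Your outline instead reconstructs the strategy of the cited source itself: reduce to the generators $\sigma_i^{\pm 1}$, exploit locality of the half-twist, and prove block-triangularity in the multifork basis by showing that every transition coefficient from a source in $E_{n,m}^{\geq r}$ to a target in $E_{n,m}^{r}$ vanishes at $\mathtt{q}=-e^{2\pi\sqrt{-1}/r}$. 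That is the right mechanism, and it is how Ito argues; it is simply not ``the paper's'' proof, since the paper has none.

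As a proof, however, your proposal has a genuine gap: the entire content of the proposition is the vanishing of the high-to-low coefficients, and you never establish it --- no formula for $\sigma_i^{\pm 1}\mathbb{F}_{\mathbf{e}}$ is written down, no coefficient is computed, and the decisive step is deferred as ``I would check that each high-to-low transition coefficient carries exactly such a vanishing factor.'' Worse, the arithmetic offered in support is false as stated. At $\mathtt{q}=-e^{2\pi\sqrt{-1}/r}$ one indeed has $\mathtt{q}^{2r}=1$, but the multiplicative order of $\mathtt{q}$ is $2r$, $r$, or $r/2$ according to whether $r$ is odd, $r\equiv 0 \pmod 4$, or $r\equiv 2 \pmod 4$. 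Consequently, with balanced quantum integers $[k]=(\mathtt{q}^{k}-\mathtt{q}^{-k})/(\mathtt{q}-\mathtt{q}^{-1})$ one has $[r]=0$ but also $[r/2]=0$ for every even $r$, contradicting your claim that $[k]\neq 0$ for $0<k<r$; with the unbalanced convention $[k]=(1-\mathtt{q}^{k})/(1-\mathtt{q})$ one has $[r]\neq 0$ for every odd $r$, contradicting your claim that $[r]=0$. Likewise, invoking ``$q$-Lucas in base $r$'' presumes $\mathtt{q}$ has order exactly $r$, which fails in two of the three congruence cases; which vanishing theorem applies can only be decided once the actual quantum binomials appearing in the action formula (and in which variable, e.g.\ $\mathtt{q}$ versus $\mathtt{q}^{2}$) are pinned down. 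Extra vanishing would not endanger invariance, so the strategy is not doomed, but the root-of-unity input in the form you state it cannot be the engine of the proof. Finally, the dictionary you invoke, $\mathtt{x}=q^{2\alpha}$, $\mathtt{q}=q$, is not the specialization in force here --- Definition~\ref{def:l2r} and Theorem~\ref{thm:fusiontree-homological} use $\mathtt{x}=\xi_r^{-2(\alpha+r-1)}=q^{-2\lambda}$ and $\mathtt{q}=-\xi_r^{2}=-q^{2}$ --- so the proposed transport to the quantum side would also have to be redone with the correct identification before any coefficient can be matched.
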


\begin{remark}
Ito~\cite{Ito16} introduced the truncated Lawrence representations as a means to derive a formula of the colored Alexander invariant in terms of homological representations. From a pure homological representation-theoretic point of view, these truncated Lawrence representations are interesting in its own right. However, it is now understood that it is not necessary to factor through the process of truncation if one plans to ultimately specialize at roots of unity: at these specializations, the multiforks labeled by elements in $E_{n,m}^{\geq r}$ vanish at Borel-Moore homology (see~\cite[Corollary~4.10]{Homological-Martel}).
\end{remark}

\begin{definition}{(Lawrence representations specialized at the $2r^{th}$-root of unity)} \label{def:l2r}

Let $r \geq 2$ be an integer and $q=\xi_{r}=e^{\frac{\pi \sqrt{-1}}{r}}$ the $2r^{th}$-root of unity. We refer the (truncated) Lawrence representation
\begin{equation*}
\mathcal{L}_{n,m,\alpha}^r:=l_{n,m}|_{\mathtt{x}=\xi_r^{-2(\alpha+r-1)}, \mathtt{q}=-\xi_r^2}
\end{equation*}
where $\mathtt{q}$ is specialized to $-\xi_r^2=-e^{\frac{2\pi \sqrt{-1}}{r}}$ and $\mathtt{x}$ is specialized to $\xi_r^{-2(\alpha+r-1)}$ as the \emph{Lawrence representation specialized at the $2r^{th}$-root of unity}. 
\end{definition}

\begin{definition}{(Lawrence representations specialized at roots of unity)}

We call the family of homological representations $\mathcal{L}_{n,m,\alpha}^r$ as the \emph{Lawrence representations specialized at roots of unity} or the \emph{$q$-specialized Lawrence representations} for short.
\end{definition}

\section{The unrolled quantum $\slt$}
In this section we recall the algebra $\overline{U}_{q}^H\mathfrak{sl}(2)$ and a category of modules over this algebra. Fix an integer $r\geq 2$ and let $q=e^\frac{\pi\sqrt{-1}}{r}$ be a $2r^{th}$-root of unity. Let $\mathbb{C}$ be the complex numbers and $\ddot{\mathbb{C}}=(\mathbb{C}\setminus \mathbb{Z})\cup r\mathbb{Z}.$ We use the notation $q^x=e^{\frac{\pi\sqrt{-1}}{r}x}$. For $n\in \N$, we also set 
\begin{equation*}
\qn{x}=q^x-q^{-x},\quad\qN{x}=\frac{\qn x}{\qn1},\quad\qn{n}!=\qn{n}\qn{n-1}\cdots\qn{1},\et\qN{n}!=\qN{n}\qN{n-1}\cdots\qN{1}.
\end{equation*}

\subsection{The Drinfel'd-Jimbo quantum group}
Let $\Uq$ be the $\mathbb{C}$-algebra given by generators $E, F, K, K^{-1}$ and relations:
\begin{align}\label{E:RelDCUqsl}
  KK^{-1}&=K^{-1}K=1, & KEK^{-1}&=q^2E, & KFK^{-1}&=q^{-2}F, &
  [E,F]&=\frac{K-K^{-1}}{q-q^{-1}}.
\end{align}
The algebra $\Uq$ is a Hopf algebra where the coproduct, counit, and antipode are defined by
\begin{align}\label{E:HopfAlgDCUqsl}
  \Delta(E)&= 1\otimes E + E\otimes K,
  &\varepsilon(E)&= 0,
  &S(E)&=-EK^{-1},
  \\
  \Delta(F)&=K^{-1} \otimes F + F\otimes 1,
  &\varepsilon(F)&=0,& S(F)&=-KF,
    \\
  \Delta(K)&=K\otimes K,
  &\varepsilon(K)&=1,
  & S(K)&=K^{-1}
.\label{E:HopfAlgDCUqsle}
\end{align}
Let $\UqMed$ be the algebra $\Uq$ modulo the relations $E^r=F^r=0$.

\subsection{A modified version of $\Uq$}\label{SS:UqH} 
Let $\UsltH$ be the $\mathbb{C}$-algebra given by generators $E$, $F$, $K$, $K^{-1}$, $H$ and relations in \eqref{E:RelDCUqsl} along with the relations:
\begin{align*}
  HK&=KH,
& [H,E]&=2E, & [H,F]&=-2F.
\end{align*}
The algebra $\UsltH$ is a Hopf algebra where the coproduct, counit, and antipode are defined in \eqref{E:HopfAlgDCUqsl}--\eqref{E:HopfAlgDCUqsle} and by
\begin{align*}
  \Delta(H)&=H\otimes 1 + 1 \otimes H,
  & \varepsilon(H)&=0,
  &S(H)&=-H.
\end{align*}
Define the \emph{unrolled quantum group} $\overline{U}_{q}^H\mathfrak{sl}(2)$ to be the Hopf algebra $\UsltH$ modulo the relations $E^r=F^r=0$.

Let $V$ be a finite-dimensional $\overline{U}_q^H\mathfrak{sl}(2)$-module. An eigenvalue $\lambda\in \C$ of the operator $H:V\to V$ is called a \emph{weight} of $V$ and the associated eigenspace is called a \emph{weight space}. A vector $v$ in the $\lambda$-eigenspace of $H$ is a \emph{weight vector} of \emph{weight} $\lambda$, i.e. $Hv=\lambda v$.  We call $V$ a \emph{weight module} if $V$ splits as a direct sum of weight spaces and $\qr^H=K$ as operators on $V$, i.e. $Kv=q^{\lambda} v$ for any vector $v$ of weight $\lambda$.  Let $\cat$ be the category of finite-dimensional weight $\overline{U}_q^H\mathfrak{sl}(2)$-modules.

Since $\overline{U}_q^H\mathfrak{sl}(2)$ is a Hopf algebra, $\cat$ is a tensor category where the unit $\mathbb{I}$ is the $1$-dimensional trivial module $\mathbb{C}$.  Moreover, $\cat$ is $\mathbb{C}$-linear: $\Hom$-sets are $\mathbb{C}$-modules, the composition and tensor product of morphisms are $\mathbb{C}$-bilinear, and $\End_\cat(\mathbb{I})=\C\Id_\mathbb{I}$.  When it is clear, we denote the unit $\mathbb{I}$ by $\mathbb{C}$.  We say a module $V$ is \emph{simple} if it has no proper submodules. For a module $V$ and a morphism $f\in\End_\cat(V)$, we write $\brk f_V=\lambda\in\C$ if $f-\lambda\Id_V$ is nilpotent.  If $V$ is simple, then Schur's lemma implies that $\End_\cat(V)=\C\Id_V$. Thus for $f\in \End_\cat(V)$, we have $f=\brk{f}_V \Id_V$.

We will now recall the fact that the category $\cat$ is a ribbon category. Let $V$ and $W$ be objects of $\cat$. Let $\{v_i\}$ be a basis of $V$ and $\{v_i^*\}$ be a dual basis of $V^*=\Hom_\C(V,\C)$. Then
\begin{align*}
  \coev_V \maps & \C \rightarrow V\otimes V^{*}, \text{ given by } 1 \mapsto \sum
  v_i\otimes v_i^*,  &
  \ev_V \maps & V^*\otimes V\rightarrow \C, \text{ given by }
  f\otimes w \mapsto f(w)
\end{align*}
are duality morphisms of $\cat$. In \cite{Oh}, Ohtsuki truncates the usual formula of the $h$-adic quantum $\slt$ $R$-matrix to define an operator on $V\otimes W$ by
\begin{equation}
  \label{eq:R}
  R=\qr^{H\otimes H/2} \sum_{n=0}^{r-1} \frac{\{1\}^{2n}}{\{n\}!}\qr^{n(n-1)/2}
  E^n\otimes F^n
\end{equation}
where $q^{H\otimes H/2}$ is the operator given by
\begin{equation*}
q^{H\otimes H/2}(v\otimes v') =q^{\lambda \lambda'/2}v\otimes v'
\end{equation*}
for weight vectors $v$ and $v'$ of weights $\lambda$ and
$\lambda'$ respectively. The $R$-matrix is not an element in $\Ubar\otimes \Ubar$.
However, the action of $R$ on the tensor product of two objects of
$\cat$ is a well-defined linear map.
Moreover, $R$ gives rise to a braiding $c_{V,W} \maps V\otimes W
\rightarrow W \otimes V$ on $\cat$ defined by $v\otimes w \mapsto
\tau(R(v\otimes w))$ where $\tau$ is the permutation $x\otimes
y\mapsto y\otimes x$.
This braiding follows from the invertibility of the $R$-matrix.  An explicit inverse
(see \cite[Section 2.1.2]{BDGG} and \cite{Oh}) is given by
\begin{equation}
  \label{eq:Rinverse}
  R^{-1}= \left(\sum_{n=0}^{r-1} (-1)^n  \frac{\{1\}^{2n}}{\{n\}!}\qr^{-n(n-1)/2}
  E^n\otimes F^n \right) \qr^{-H\otimes H/2}.
\end{equation}

Let $\theta$ be the operator given by
\begin{equation}
\theta=K^{r-1}\sum_{n=0}^{r-1}
\frac{\{1\}^{2n}}{\{n\}!}\qr^{n(n-1)/2} S(F^n)\qr^{-H^2/2}E^n
\end{equation}
where $q^{-H^2/2}$ is an operator defined on a weight vector $v_\lambda$ by
$q^{-H^2/2}.v_\lambda = q^{-\lambda^2/2}v_\lambda.$
Ohtsuki shows that the family of maps $\theta_V:V\rightarrow V$ in
$\cat$ defined by $v\mapsto \theta^{-1}v$ is a twist (see
\cite{Mur08,Oh}).

Now the ribbon structure on $\cat$ yields right duality morphisms
\begin{equation}\label{E:d'b'}
  \tev_{V}=\ev_{V}c_{V,V^*}(\theta_V\otimes\Id_{V^*})\text{ and }\tcoev_V =(\Id_{V^*}\otimes\theta_V)c_{V,V^*}\coev_V
\end{equation}
which are compatible with the left duality morphisms $\{\coev_V\}_V$ and
$\{\ev_V\}_V$. These duality morphisms are given explicitly by \begin{align*}
  \tcoev_{V} \maps & \C \rightarrow V^*\otimes V, \text{ where } 1 \mapsto
  \sum v_i^* \otimes K^{r-1}v_i, \\ \tev_{V} \maps & V\otimes V^*\rightarrow
  \C, \text{ where } v\otimes f \mapsto f(K^{1-r}v).
\end{align*}
The \emph{quantum dimension} $\qdim(V)$ of an object $V$ in $\cat$ is defined by
\begin{equation*}
\qdim(V)= \brk{\tev_V\circ \coev_V}_\mathbb{I}=\sum  v_i^*(K^{1-r}v_i) \ .
\end{equation*}

For $g\in\C/2\Z$, define
$\cat_{g}$ as the full subcategory of weight modules whose weights
are all in the class  $g$ (mod $2\Z$). Then $\cat=\{\cat_g\}_{g\in \C/2\Z}$ is a $\C/2\Z$-graded category (where $\C/2\Z$ is an additive group). Let $V\in\cat_g$ and $V'\in\cat_{g'}$. Then the weights of $V\otimes V'$ are congruent to $g+g' \mod 2\Z$,
and so the tensor product is in $\cat_{g+g'}$.  Also, if $g\neq g'$
 then $\Hom_\cat(V, V')=0$ since morphisms in $\cat$  preserve weights.
Finally, if $f\in V^*=\Hom_\C(V,\C)$, then by definition the action of
$H$ on $f$ is given by $(Hf)(v)=f(S(H)v)=-f(Hv)$
and so $V^{*}\in\cat_{-g}$.
We call the part of the category where $g=0,1$, the {\it singular } part of the category and refer to the objects in this part of the category as singular objects. The category $\cat_g$ is non-semisimple if $g$ is singular, otherwise $g$ is called {\it generic} and $\cat_g$ is semisimple.

We now consider the following classes of finite-dimensional highest weight modules. For each $n\in \{ 0, \dots, r-1 \}$ let $S_n$ be the usual $(n+1)$-dimensional simple highest weight $\overline{U}_q^H\mathfrak{sl}(2)$-module with highest weight $n$. The module $S_n$ is the highest weight module with a highest weight vector $s_0$ such that $Es_0 = 0$ and $Hs_0 = ns_0$. Then $\{ s_0, s_1, \dots, s_n \}$ is a basis of $S_n$ where \begin{equation}\label{E:BasisS}
H.s_i=(n-2i) s_i,\quad E.s_i= \frac{\qn i\qn{n+1-i}}{\qn1^2}
s_{i-1} ,\quad F.s_i=s_{i+1}.
\end{equation}
The quantum dimension of $S_n$ is $\qdim(S_n) = (-1)^n\frac{\qn{n+1}}{\qn{1}}$.

Next, for each $\alpha\in \C$, we let $V_\alpha$ be the $r$-dimensional
highest weight $\overline{U}_q^H\mathfrak{sl}(2)$-module of highest weight $\alpha + r - 1$.  The
module $V_\alpha$ has a basis $\{v_0, \dots, v_{r-1} \}$ whose action is
given by
\begin{equation}\label{E:BasisV}
H.v_i=(\alpha + r - 1-2i) v_i,\quad E.v_i= \frac{\qn i\qn{i-\alpha}}{\qn1^2}
v_{i-1} ,\quad F.v_i=v_{i+1}.
\end{equation}
For all $\alpha\in \C$, the quantum dimension of $V_\alpha$ is zero:
$$\qdim(V_\alpha)=\sum\limits_{i=0}^{r-1}v_i^*(K^{1-r}v_i) = q^{(1-r)(\alpha+r-1)} \frac{1-q^{2r(r-1)}}{1-q^{2(r-1)}}=0.$$

For $a\in \Z$, let $\mathbb{C}_{ra}^{H}$ be the one dimensional module in $\mathcal{C}_{\overline{0}}$ where both $E$ and $F$ act by zero and $H$ acts by $ra$. Every simple module of $\mathcal{C}$ is isomorphic to exactly one of the modules in the list:
\begin{itemize}
    \item $S_n \otimes \C_{ra}^{H}$, for $n=0,\dots, r-2$ and $a\in \Z$,
    \item $V_\alpha$ for $\alpha \in (\mathbb{C}\setminus \mathbb{Z}) \cup r\mathbb{Z}$.
\end{itemize}

A key ingredient in the construction of non-semisimple TQFTs from the representations of $\overline{U}_q^H\mathfrak{sl}(2)$ is the notion of a \emph{modified trace} $\mt:=\mt_V$~\cite{BCGP2, CGP14, GPT09}. Taking the modified trace of the identity gives a notion of \emph{modified dimension}, which can be viewed as renormalizing the representations whose usual quantum dimension is zero. The modified dimension of $V_\alpha$ is given by 
\begin{equation} \label{eq:md}
\md(\alpha) := \md(V_\alpha) =  (-1)^{r-1}\frac{r\qn{\alpha}}{\qn{r\alpha}}=(-1)^{r-1}r\frac{q^{\alpha}-q^{-\alpha}}{q^{r\alpha}-q^{-r\alpha}}
\end{equation}
for $\alpha \in \ddot{\mathbb{C}}$.

We will use graphical calculus to pictorially present the data of the category of representations $\mathcal{C}$. We refer the reader to~\cite{BK, Turaev-Book} for formal definitions of the language of graphical calculus. All diagrams are read from bottom to top. Our main focus is to study representations of the braid group $\Br_n$ on various morphism spaces in $\mathcal{C}$.

\section{Fusion Trees} \label{sec:fusion-trees}

We study ribbon graphs colored by simple objects from the \emph{generic} part of the category. For integers $n \geq 1$ and $m\geq 0$, let $\mathcal{H}_{n,m,\alpha}^r:= \Hom(V_{n\alpha+(n-1)(r-1)-2m}, V_{\alpha}^{\otimes n})$. The superscript $r$ is used to specify that the $\Hom$-space comes from the category constructed from the $2r^{th}$-root of unity. Note that this is nonzero if and only if $m\in \{ 0, 1, \dots, (n-1)(r-1) \}$. For our arguments, we choose specific morphisms
\begin{equation} \label{eq:conventions}
Y_{\gamma}^{\alpha,\beta}:=\hackcenter{\begin{tikzpicture}[ scale=1.1]
\begin{scope}[decoration={markings, mark=at position 0.5 with {\arrow{>}}}]
  \draw[ultra thick, black, postaction=decorate] (0,0) to (.6,-.6);
  \draw[ultra thick, black, postaction=decorate] (.6,-.6) to (.6,-1.2);
  \draw[ultra thick, black, postaction=decorate] (1.2,0) to (.6,-.6);
\end{scope}
   \node at (0,0.2) {$\alpha$};
   \node at (1.2,0.2) {$\beta$};
   \node at (.6,-1.4) {$\gamma$};
\end{tikzpicture} }
\in \Hom(V_\gamma, V_\alpha \otimes V_\beta),\qquad Y^{\gamma}_{\alpha,\beta}:=\hackcenter{\begin{tikzpicture}[ scale=1.1]
\begin{scope}[decoration={markings, mark=at position 0.75 with {\arrow{>}}}]
  \draw[ultra thick, black, postaction=decorate] (.6,0) to (.6,-.6);
  \draw[ultra thick, black, postaction=decorate] (.6,-.6) to (1.2,-1.2);
  \draw[ultra thick, black, postaction=decorate] (.6,-.6) to (0,-1.2);
\end{scope}
   \node at (0.6,0.2) {$\gamma$};
   \node at (0,-1.4) {$\alpha$};
   \node at (1.2,-1.4) {$\beta$};
\end{tikzpicture} }
\in \Hom(V_\alpha \otimes V_\beta, V_\gamma)
\end{equation}
coming from the convention of \cite{CGP14, CM}.

\subsection{$\alpha$-Fusion Trees}
First, we discuss the dimension of $\mathcal{H}_{n,m,\alpha}^r$ and enumerate its basis elements using fusion trees. Before proceeding, we introduce an indexing set:
\begin{equation*}
H_k = \{ k-1, k-3, \dots, -(k-1) \}.
\end{equation*}

\begin{lemma}
Suppose $\alpha,2\alpha, \dots, n\alpha\in \mathbb{C}\setminus \mathbb{Z}$ (i.e. generic values). Then, the dimension of $\mathcal{H}_{n,m,\alpha}^r$ is $d_{n,m}^r = |E_{n,m}^r|$.
\end{lemma}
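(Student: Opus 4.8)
The plan is to realize $\dim \mathcal{H}_{n,m,\alpha}^r$ as the multiplicity of the simple module $V_{n\alpha+(n-1)(r-1)-2m}$ inside $V_\alpha^{\otimes n}$, and to compute that multiplicity by iterating the two-fold Clebsch--Gordan rule along a fusion tree. \emph{First} I would record the role of the genericity hypothesis. Since $V_\alpha$ lies in the graded piece $\cat_g$ with $g\equiv \alpha+r-1 \pmod{2\Z}$, and the intermediate object $V_\alpha^{\otimes j}$ lies in the piece graded by $j(\alpha+r-1)\equiv j\alpha \pmod{2\Z}$, the assumption $\alpha,2\alpha,\dots,n\alpha\in\C\setminus\Z$ guarantees that every graded piece encountered for $1\le j\le n$ is \emph{generic}, hence semisimple. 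Consequently each module in the decomposition is a direct sum of simple modules $V_\gamma$ with $\gamma$ generic, the $V_\gamma$ are pairwise non-isomorphic, and $\dim\Hom(V_\gamma,M)$ equals the multiplicity of $V_\gamma$ in $M$.

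\emph{Second} I would establish the fusion rule
\[
V_\alpha\otimes V_\beta\;\cong\;\bigoplus_{k=0}^{r-1}V_{\alpha+\beta+r-1-2k},
\]
valid whenever $\alpha+\beta$ is generic. As both sides live in a single semisimple block, it suffices to match $H$-characters. Writing $P(z)=z^{r-1}+z^{r-3}+\cdots+z^{-(r-1)}$, the basis \eqref{E:BasisV} gives the character of $V_\alpha$ as $z^\alpha P(z)$, so the character of $V_\alpha\otimes V_\beta$ is $z^{\alpha+\beta}P(z)^2=P(z)\sum_{k=0}^{r-1}z^{\alpha+\beta+r-1-2k}$, which is exactly the sum of the characters of the claimed summands, each occurring with multiplicity one.

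\emph{Third} I would iterate, tensoring on the right by $V_\alpha$ one factor at a time. Introducing an index $e_j\in\{0,\dots,r-1\}$ for the Clebsch--Gordan summand chosen at the $j$-th step, the internal label after $j$ fusions is $\gamma_j=(j+1)\alpha+j(r-1)-2(e_1+\cdots+e_j)$, so the final label is $\gamma_{n-1}=n\alpha+(n-1)(r-1)-2m$ precisely when $e_1+\cdots+e_{n-1}=m$. Because each fusion step has multiplicity one and the category is semisimple, the compositions of the morphisms $Y^{\alpha,\beta}_\gamma$ along distinct admissible label sequences form a basis of $\mathcal{H}_{n,m,\alpha}^r$; thus the dimension equals the number of tuples $(e_1,\dots,e_{n-1})$ with $0\le e_i\le r-1$ and $\sum_i e_i=m$. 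This is exactly $|E_{n,m}^r|=d_{n,m}^r$, and it is worth emphasizing that the truncation $e_i\le r-1$ in the definition of $E_{n,m}^r$ is nothing but the range of the Clebsch--Gordan index, reflecting the $r$-dimensionality of $V_\alpha$. The main obstacle is the \emph{stability} of semisimplicity across all iterations: I must check that no intermediate internal label $\gamma_j$ ever becomes singular, which is controlled exactly by the hypotheses $j\alpha\notin\Z$ for $1\le j\le n$; confirming this, together with the multiplicity-one statement ensuring the fusion-tree count carries no hidden internal multiplicity, is the crux of the argument.
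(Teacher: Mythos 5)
Your proof is correct and follows essentially the same route as the paper: iterate the two-fold generic fusion rule $V_\alpha\otimes V_\beta\cong\bigoplus_{k=0}^{r-1}V_{\alpha+\beta+r-1-2k}$ through $V_\alpha^{\otimes n}$, use the hypothesis that each $j\alpha\notin\Z$ to keep every intermediate block generic (hence semisimple), and count admissible label sequences, which are exactly the tuples in $E_{n,m}^r$, concluding by Schur's lemma. The only differences are cosmetic: you verify the fusion rule by a character computation where the paper simply cites it, and you count paths directly where the paper runs the equivalent recursion $d_{k+1,m}^r=\sum_{j=0}^{r-1}d_{k,m-j}^r$ by induction.
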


\begin{proof}
For generic values of $\alpha$ and $\beta$ with $\alpha+\beta$ generic, recall that the tensor product structure is given by $$V_\alpha \otimes V_\beta \cong \bigoplus\limits_{\gamma \in \alpha + \beta + H_r} V_\gamma = \bigoplus\limits_{m=0}^{r-1} V_{\alpha + \beta + (r-1) - 2m}.$$ We will use the shorthand notation that $nV_\beta:=V_\beta^{\oplus n}$.

We claim that $V_\alpha^{\otimes n}\cong \bigoplus\limits_{m=0}^{(n-1)(r-1)}d_{n,m}^r V_{n\alpha + (n-1)(r-1)-2m}$ to conclude that $\dim \mathcal{H}_{n,m,\alpha}^r = d_{n,m}^r$ via Schur's lemma. We approach the claim inductively.

When $n=2$, we have $V_\alpha \otimes V_\alpha \cong \bigoplus\limits_{m=0}^{r-1} V_{2\alpha + (r-1) - 2m} = \bigoplus\limits_{m=0}^{r-1} d_{2,m}^{r} V_{2\alpha + (r-1) - 2m}$. Next, 
\begin{align*}
V_\alpha^{\otimes (k+1)} & \cong  \left( \bigoplus\limits_{m=0}^{(k-1)(r-1)}d_{k,m}^r V_{k\alpha +(k-1)(r-1) - 2m} \right) \otimes V_\alpha\\
& =   \bigoplus\limits_{m=0}^{(k-1)(r-1)}d_{k,m}^r \left( V_{k\alpha +(k-1)(r-1) - 2m}  \otimes V_\alpha \right)\\
& = \bigoplus\limits_{m=0}^{(k-1)(r-1)}d_{k,m}^r \left( \bigoplus\limits_{j=0}^{r-1} V_{(k+1)\alpha + k(r-1) -2(m +j)} \right)\\
& = \bigoplus\limits_{m=0}^{k(r-1)} \left( \sum\limits_{j=0}^{r-1} d_{k,m-j}^r \right) V_{(k+1)\alpha + k (r-1) - 2m}\\
& = \bigoplus\limits_{m=0}^{k(r-1)} d_{k+1,m}^r V_{(k+1)\alpha + k (r-1)-2m} .
\end{align*}
The second to last equality comes from the fact that elements in $E_{k+1,m}^r$ are determined by appending one additional entry of value $j$ to elements in $E_{k,m-j}^r$. Therefore, 
\begin{equation*}
d_{k+1,m}^r = |E_{k+1,m}^r| = \sum\limits_{j=0}^{r-1}|E_{k,m-j}^r| = \sum\limits_{j=0}^{r-1} d_{k,m-j}^{r}.
\end{equation*}
Note that, by definition of $E_{n,m}^r$, if $m-j<0$ then $d_{k,m-j}^r = 0$.
\end{proof}

It is useful to have the notion of a $\mathcal{H}_{n,m,\alpha}^r$ path. This is a path $$\mathbf{p}=(\alpha=\gamma_0, \gamma_1, \dots, \gamma_{n-2}, n\alpha + (n-1)(r-1) - 2m=\gamma_{n-1})$$ in the triangle of multinomial univariate coefficients from $\alpha$ to $n \alpha + (n-1)(r-1) - 2m$ by taking $(n-1)$ total steps where each step moves in the $(k,-1)$ direction where $k\in H_{r}$.

Here, by multinomial univariate coefficients, we mean the coefficient of $x^m$ in the expansion of $(1+x+x^2 + \dots + x^{r-1})^{n}$. In particular, when $r=2$, we retrieve the familiar binomial coefficients and Pascal's triangle. The triangle of multinomial univariate coefficients will provide us a combinatorial way to enumerate and explicitly write down the full basis of $\mathcal{H}_{n,m,\alpha}^r$. The corresponding Bratteli diagram will recover back the values of $d_{n,m}^r$. We demonstrate with a concrete example.

\textbf{Example: $r=4$.} Note that $H_4 = \{ 3, 1, -1, -3 \}$. The triangle of quadinomial coefficients \eqref{eq:triangle} gives the values of $d_{n,m}^4$.
\begin{equation} \label{eq:triangle}
 \hackcenter{
\begin{tikzpicture}[yscale=1, scale=0.7,  decoration={markings, mark=at position 0.6 with {\arrow{>}};},]
\node at (0,0) {$\scs 1 $};
\node at (-3,-1) {$\scs 1$};
\node at (-1,-1) {$\scs 1$};
\node at (1,-1) {$\scs 1$};
\node at (3,-1) {$\scs 1$};
\node at (-6,-2) {$\scs 1$};
\node at (-4,-2) {$\scs 2$};
\node at (-2,-2) {$\scs 3$};
\node at (0,-2) {$\scs 4$};
\node at (2,-2) {$\scs 3$};
\node at (4,-2) {$\scs 2$};
\node at (6,-2) {$\scs 1$};
\node at (0,-3) {$\scs \cdots$};
\node at (-8,-4) {$\scs d_{n,3(n-1)}^4$};
\node at (-4,-4) {$\scs \cdots$};
\node at (0,-4) {$\scs d_{n,m}^4$};
\node at (4,-4) {$\scs \cdots$};
\node at (8,-4) {$\scs d_{n,0}^4  $};
\end{tikzpicture}}
\end{equation}
One may convert each entry to admissible entries of the fusion channels to obtain the following triangle \eqref{quadrinomial-triangle1}. That is:
\begin{enumerate}
    \item A row will correspond to a level $n$, starting with $n=1$ at the top, indicated by the coefficient of $\alpha$.
    \item Within each row, each entry will placed on the $x$-position corresponding to their constant values.
    \item At level $n$, the terms correspond to the simple object $V_{n\alpha+(n-1)(r-1)-2m}$ that appears in the semisimple decomposition of $V_\alpha^{\otimes n}$. 
\end{enumerate}
For example, at level $n=3$, $V_\alpha^{\otimes3}\cong V_{3\alpha+6} \oplus 2V_{3\alpha+4} \oplus 3V_{3\alpha+2} \oplus 4V_{3\alpha}\oplus 3V_{3\alpha-2}\oplus 2V_{3\alpha-4} \oplus V_{3\alpha-6}$. Here, $3\alpha-4$ will be placed on the $x$-position of $-4$.
\begin{equation}\label{quadrinomial-triangle1}
 \hackcenter{
\begin{tikzpicture}[yscale=1, scale=0.7,  decoration={markings, mark=at position 0.6 with {\arrow{>}};},]
\draw[<->, thick] (-7,1) -- (7,1) node[right] {$\scs x$};
\foreach \x in {-6,-5,-4,-3,-2,-1,1,0,1,2,3,4,5,6}
   \draw (\x,1.1) -- (\x,0.9);
\node[above] at (0,1) {$\scs 0$};
\node[above] at (1,1) {$\scs 1$};
\node[above] at (2,1) {$\scs 2$};
\node[above] at (3,1) {$\scs 3$};
\node[above] at (4,1) {$\scs 4$};
\node[above] at (5,1) {$\scs 5$};
\node[above] at (6,1) {$\scs 6$};
\node[above] at (-1,1) {$\scs -1$};
\node[above] at (-2,1) {$\scs -2$};
\node[above] at (-3,1) {$\scs -3$};
\node[above] at (-4,1) {$\scs -4$};
\node[above] at (-5,1) {$\scs -5$};
\node[above] at (-6,1) {$\scs -6$};
\node at (0,0) {$\scs \alpha $};
\node at (-3,-1) {$\scs 2\alpha-3$};
\node at (-1,-1) {$\scs 2\alpha-1$};
\node at (1,-1) {$\scs 2\alpha+1$};
\node at (3,-1) {$\scs 2\alpha+3$};
\node at (-6,-2) {$\scs 3\alpha-6$};
\node at (-4,-2) {$\scs 3\alpha-4$};
\node at (-2,-2) {$\scs 3\alpha-2$};
\node at (0,-2) {$\scs 3\alpha$};
\node at (2,-2) {$\scs 3\alpha+2$};
\node at (4,-2) {$\scs 3\alpha+4$};
\node at (6,-2) {$\scs 3\alpha+6$};
\node at (0,-3) {$\scs \vdots$};
\node at (-8,-4) {$\scs n\alpha-3(n-1)$};
\node at (-4,-4) {$\scs \cdots$};
\node at (0,-4) {$\scs n\alpha+3(n-1)-2m$};
\node at (4,-4) {$\scs \cdots$};
\node at (8,-4) {$\scs n\alpha+3(n-1)$};
\node at (7,-3) {$\scs \ddots$};
\node at (-7,-3) {$\scs \mathinner{\reflectbox{$\ddots$}}$};
\end{tikzpicture}}
\end{equation}
We will also label such paths by direction sequences given elements of $E_{n,m}^r$ which indicates that at the $i^{th}$ step the path moves by $(r-1-2e_i)$ in the $x$-direction. For example, there is the path $(\alpha,2\alpha-3,3\alpha-2,4\alpha + 1)$ given in \eqref{quadrinomial-triangle2}.  The corresponding direction sequence is $(3,1,0)\in E_{4,4}^4$.
\begin{equation}\label{quadrinomial-triangle2}
 \hackcenter{
\begin{tikzpicture}[yscale=1, scale=0.7,  decoration={markings, mark=at position 0.6 with {\arrow{>}};},]
\node at (0,0) {$\scs \alpha $};
\node at (-3,-1) {$\scs 2\alpha-3$};
\node at (-1,-1) {$\scs 2\alpha-1$};
\node at (1,-1) {$\scs 2\alpha+1$};
\node at (3,-1) {$\scs 2\alpha+3$};
\node at (-6,-2) {$\scs 3\alpha-6$};
\node at (-4,-2) {$\scs 3\alpha-4$};
\node at (-2,-2) {$\scs 3\alpha-2$};
\node at (0,-2) {$\scs 3\alpha$};
\node at (2,-2) {$\scs 3\alpha+2$};
\node at (4,-2) {$\scs 3\alpha+4$};
\node at (6,-2) {$\scs 3\alpha+6$};
\node at (-9,-3) {$\scs 4\alpha-9$};
\node at (-7,-3) {$\scs 4\alpha-7$};
\node at (-5,-3) {$\scs 4\alpha-5$};
\node at (-3,-3) {$\scs 4\alpha - 3$};
\node at (-1,-3) {$\scs 4\alpha-1$};
\node at (1,-3) {$\scs 4\alpha+1$};
\node at (3,-3) {$\scs 4\alpha+3$};
\node at (5,-3) {$\scs 4\alpha+5$};
\node at (7,-3) {$\scs 4\alpha+7$};
\node at (9,-3) {$\scs 4\alpha+9$};
\draw[postaction={decorate}] (-.2,-.2) to  (-2.8,-.8);
\draw[postaction={decorate}] (-2.8,-1.2) to  (-2.2,-1.8);
\draw[postaction={decorate}] (-1.8,-2.2) to  (.8,-2.8);
\end{tikzpicture}}
\end{equation}
An explicit formula for translating between a $\mathcal{H}_{n,m,\alpha}^r$ path $\mathbf{p}$ and $\mathbf{e} \in E_{n,m}^r$ is provided later in Equation~\eqref{eq:bijection}.

\begin{lemma} \label{lemma:basis}
A basis of the vector space $\mathcal{H}_{n,m,\alpha}^r$ can be enumerated by paths $\alpha$ to $n \alpha + (n-1)(r-1) - 2m$. In particular, the basis is also enumerated by the indexing set $E_{n,m}^r$.
\end{lemma}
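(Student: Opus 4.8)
The plan is to produce an explicit basis of $\mathcal{H}_{n,m,\alpha}^r = \Hom(V_{n\alpha+(n-1)(r-1)-2m}, V_\alpha^{\otimes n})$ indexed by the admissible fusion paths, and then to match these paths bijectively with the set $E_{n,m}^r$. Working under the same genericity hypothesis $\alpha, 2\alpha, \dots, n\alpha \in \mathbb{C}\setminus\mathbb{Z}$ as in the previous lemma, the relevant blocks of $\cat$ are semisimple, so it suffices to exhibit one nonzero morphism per path and to prove these are linearly independent; the dimension count $\dim \mathcal{H}_{n,m,\alpha}^r = d_{n,m}^r$ already established then forces them to span.

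First I would attach to each path $\mathbf{p} = (\alpha = \gamma_0, \gamma_1, \dots, \gamma_{n-1})$ ending at $\gamma_{n-1} = n\alpha+(n-1)(r-1)-2m$ the iterated splitting morphism assembled from the trivalent vertices of \eqref{eq:conventions},
\[
Y_{\mathbf{p}} = (Y_{\gamma_1}^{\alpha,\alpha} \otimes \Id_{V_\alpha^{\otimes(n-2)}}) \circ (Y_{\gamma_2}^{\gamma_1,\alpha} \otimes \Id_{V_\alpha^{\otimes(n-3)}}) \circ \cdots \circ (Y_{\gamma_{n-2}}^{\gamma_{n-3},\alpha} \otimes \Id_{V_\alpha}) \circ Y_{\gamma_{n-1}}^{\gamma_{n-2},\alpha},
\]
which is exactly the tree drawn in Figure~\ref{fig:trees} read from the bottom strand $V_{\gamma_{n-1}}$ upward. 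Each factor $Y_{\gamma_i}^{\gamma_{i-1},\alpha} \in \Hom(V_{\gamma_i}, V_{\gamma_{i-1}} \otimes V_\alpha)$ is nonzero precisely because the admissibility condition $\gamma_i - \gamma_{i-1} \in \alpha + H_r$ guarantees, by the generic fusion rule $V_{\gamma_{i-1}} \otimes V_\alpha \cong \bigoplus_{\gamma \in \gamma_{i-1}+\alpha+H_r} V_\gamma$, that $V_{\gamma_i}$ occurs with multiplicity one; hence $Y_{\mathbf{p}} \in \mathcal{H}_{n,m,\alpha}^r$.

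To prove linear independence I would pair these trees against the reversed fusion trees $\overline{Y}_{\mathbf{p}'} \maps V_\alpha^{\otimes n} \to V_{\gamma'_{n-1}}$ built in the same way from the companion merging morphisms $Y^{\gamma}_{\alpha,\beta}$ of \eqref{eq:conventions}. Since every $V_{\gamma_i}$ is simple, Schur's lemma gives $\Hom(V_{\gamma_i}, V_{\gamma'_i}) = 0$ unless $\gamma_i = \gamma'_i$, in which case it is $\mathbb{C}\,\Id_{V_{\gamma_i}}$, so that a splitting followed by a merging through distinct intermediate labels vanishes and through equal labels is a nonzero scalar multiple of the identity. Consequently $\overline{Y}_{\mathbf{p}'} \circ Y_{\mathbf{p}}$ is a nonzero scalar when $\mathbf{p} = \mathbf{p}'$ and zero otherwise; this diagonal, nondegenerate pairing forces $\{Y_{\mathbf{p}}\}$ to be linearly independent. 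As the number of admissible paths ending at the fixed weight equals $d_{n,m}^r = \dim \mathcal{H}_{n,m,\alpha}^r$, the family is a basis.

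Finally I would record the bijection with $E_{n,m}^r$ by parametrizing each step as $\gamma_i - \gamma_{i-1} = \alpha + (r-1) - 2e_i$ with $e_i \in \{0, 1, \dots, r-1\}$, which ranges over $\alpha + H_r$ since $H_r = \{r-1, r-3, \dots, -(r-1)\}$. Summing the $n-1$ steps gives $\gamma_{n-1} = n\alpha + (n-1)(r-1) - 2\sum_i e_i$, so the constraint $\gamma_{n-1} = n\alpha + (n-1)(r-1) - 2m$ is equivalent to $\sum_i e_i = m$ with $0 \le e_i \le r-1$, which is the defining condition of $E_{n,m}^r$ (this is the map later written as \eqref{eq:bijection}). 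The main obstacle is the independence step: one must be careful that the merging morphisms really are scalar-dual to the splitting morphisms, a fact that holds only after restricting to the generic semisimple blocks where Schur's lemma applies, rather than in the full non-semisimple category $\cat$.
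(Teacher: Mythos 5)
Your proposal is correct and follows essentially the same route as the paper: attach to each admissible path the iterated splitting tree \eqref{eq:tree}, invoke the dimension count $d_{n,m}^r$ from the preceding lemma, and record the explicit bijection \eqref{eq:bijection} between paths and $E_{n,m}^r$ via $\gamma_k=(k+1)\alpha+k(r-1)-2\sum_{i\le k}e_i$. The only difference is one of explicitness: the paper leaves linear independence implicit in the semisimple decomposition $V_\alpha^{\otimes n}\cong \bigoplus_m d_{n,m}^r\, V_{n\alpha+(n-1)(r-1)-2m}$ established just before, whereas you verify it directly by pairing against merging trees and applying Schur's lemma on the generic (semisimple) blocks --- precisely the bubble-pop orthogonality the paper only formalizes later in Lemma~\ref{lemma:character-identity}.
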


\begin{proof}
The bijection maps the path $\mathbf{p}=(\alpha, \gamma_1, \dots, \gamma_{n-2}, n\alpha + (n-1)(r-1)-2m)$ to the homomorphism encoded by the tree
\begin{equation} \label{eq:tree}
Y_{\mathbf{p}}:=\hackcenter{\begin{tikzpicture}[scale=1]
\begin{scope}[decoration={markings, mark=at position 0.5 with {\arrow{>}}}]
  \draw[ultra thick, black, postaction=decorate] (0,0) to (0.6,-0.6);
  \draw[ultra thick, black, postaction=decorate] (0.6,-0.6) to (1.2,-1.2);
  \draw[ultra thick, dotted] (1.2,-1.2) to (1.8,-1.8);
  \draw[ultra thick, black, postaction=decorate] (1.8,-1.8) to (2.4,-2.4);
  \draw[ultra thick, black, postaction=decorate] (1.2,0) to (.6,-.6);
  \draw[ultra thick, black, postaction=decorate] (2.4,0) to (1.2,-1.2);
  \draw[ultra thick, black, postaction=decorate] (3.6,0) to (1.8,-1.8);
  \draw[ultra thick, black, postaction=decorate] (4.8,0) to (2.4,-2.4);
  \draw[ultra thick, black, postaction=decorate] (2.4,-2.4) to (2.4,-3.3);
\end{scope}
   \node at (0,0.2) {$\alpha$};
   \node at (1.2,0.2) {$\alpha$};
   \node at (2.4,0.2) {$\alpha$};
   \node at (3,0.2) {$\cdots$};
   \node at (3.6,0.2) {$\alpha$};
   \node at (4.8,0.2) {$\alpha$};
   \node at (.7,-1.1) {$\gamma_1$};
   \node at (1.75,-2.3) {$\gamma_{n-2}$};
   \node at (4.65,-3) {$n\alpha + (n-1)(r-1)-2m$};
\end{tikzpicture} }.
\end{equation}
For the latter statement, suppose we are given an element $(e_1, \dots, e_{n-1}) \in E_{n,m}^r$. We correspond this element to the path $(\gamma_0, \gamma_1, \dots, \gamma_{n-1})$ where $\gamma_k = (k+1)\alpha + k(r-1) - 2 \sum\limits_{i=1}^k e_i$.
\end{proof}

Denote by $P_{n,m,\alpha}^r$ the set of all paths in $\mathcal{H}_{n,m,\alpha}^r$. Lemma~\ref{lemma:basis} tells us that this indexing set is completely characterized by sending each element of $E_{n,m}^r$ to the corresponding path as given in the set bijection defined above. That is:
\begin{align} \label{eq:bijection}
\begin{split}
E_{n,m}^r & \leftrightarrow P_{n,m,\alpha}^r\\
\mathbf{e} =(e_1, \dots, e_{n-1}) & \mapsto \mathbf{p}=(\gamma_0, \gamma_1, \dots, \gamma_{n-1}) \text{ where } \gamma_k = (k+1)\alpha + k(r-1) - 2 \sum\limits_{i=1}^k e_i.
\end{split}
\end{align}

\begin{definition}{($\alpha$-Fusion Tree)}
We refer to an element in $\mathcal{H}_{n,m,\alpha}^r$ of the form~\eqref{eq:tree} as an \emph{$\alpha$-fusion tree}. By Lemma~\ref{lemma:basis}, the basis of $\mathcal{H}_{n,m,\alpha}^r$ is enumerated by the set of all possible $\alpha$-fusion trees $\{ Y_{\mathbf{p}} \}_{\mathbf{p} \in P_{n,m,\alpha}^r}$ residing in the $\Hom$-space.

The braid group representation $\rho_{\gamma_{n-1}}: \Br_n \rightarrow \End(\mathcal{H}_{n,m,\alpha}^r)$ is defined by the following map
\begin{equation} \label{eq:braid}
\sigma_i \mapsto \hackcenter{\begin{tikzpicture}[scale=0.7]
\begin{scope}[decoration={markings, mark=at position 1 with {\arrow{>}}}]
  \draw[ultra thick, black, postaction=decorate] (0,1.2) to (1.2,0);
  \draw[line width=10pt, white] (1.2,1.2) to (0,0);
  \draw[ultra thick, black, postaction=decorate] (1.2,1.2) to (0,0);
  \draw[ultra thick, black, postaction=decorate] (-3, 1.2) to (-3, 0);
  \draw[ultra thick, black, postaction=decorate] (-1, 1.2) to (-1, 0);
  \draw[ultra thick, black, postaction=decorate] (4.2, 1.2) to (4.2, 0);
  \draw[ultra thick, black, postaction=decorate] (2.2, 1.2) to (2.2, 0);
  \draw (-3,1.5) node {$\scs \alpha_1$};
  \draw (-1,1.5) node {$\scs \alpha_{i-1}$};
  \draw (-0,1.5) node {$\scs \alpha_i$};
  \draw (1.2,1.5) node {$\scs \alpha_{i+1}$};
  \draw (2.2,1.5) node {$\scs \alpha_{i+2}$};
  \draw (4.2,1.5) node {$\scs \alpha_{n}$};
  \draw (-2,0.6) node {$\cdots$};
  \draw (3.2,0.6) node {$\cdots$};
\end{scope}
\end{tikzpicture} }
\end{equation}
which pictorially stacks on top of an $\alpha$-fusion tree. The subscript $i$ of $\alpha_i$ in Equation~\eqref{eq:braid} indicate the positioning index of $\alpha$.
\end{definition}

We are now ready to establish an identification between the space of $\alpha$-fusion trees and homological representations.

\begin{theorem}\label{thm:fusiontree-homological}
The braid group representation on $\mathcal{H}_{n,m,\alpha}^r$ is projectively isomorphic to the Lawrence representation specialized at a root of unity $\mathcal{L}_{n,m,\alpha}^{r}$.
\end{theorem}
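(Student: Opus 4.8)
The plan is to exhibit an explicit basis-matching map and then check that the two braid actions coincide up to a scalar character of $\Br_n$. Both spaces have dimension $d_{n,m}^r=|E_{n,m}^r|$ (the dimension of $\mathcal{H}_{n,m,\alpha}^r$ by the earlier lemma, and of $\mathcal{L}_{n,m,\alpha}^r$ by Definition~\ref{def:l2r}), and both carry distinguished bases indexed by the \emph{same} set $E_{n,m}^r$: the $\alpha$-fusion trees $\{Y_{\mathbf{p}}\}$ on the quantum side (Lemma~\ref{lemma:basis}) and the standard multiforks $\{\mathbb{F}_{\mathbf{e}}\}$ on the homological side. I would define the candidate isomorphism $\Psi\maps \mathcal{H}_{n,m,\alpha}^r\to\mathcal{L}_{n,m,\alpha}^r$ on basis elements by $Y_{\mathbf{p}}\mapsto\mathbb{F}_{\mathbf{e}}$, using the bijection~\eqref{eq:bijection} between paths $\mathbf{p}$ and direction sequences $\mathbf{e}\in E_{n,m}^r$, and then argue that $\Psi$ intertwines the actions up to a global scalar per braid word.

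First I would reinterpret the fusion-tree space representation-theoretically. By Schur's lemma and the highest weight structure, $\mathcal{H}_{n,m,\alpha}^r=\Hom(V_\gamma,V_\alpha^{\otimes n})$ with $\gamma=n\alpha+(n-1)(r-1)-2m$ is identified with the space of highest weight vectors of weight $\gamma$ inside $V_\alpha^{\otimes n}$: a morphism is determined by the image of the cyclic highest weight vector of $V_\gamma$, which must land in a highest weight vector of weight $\gamma$. Under this identification the braid action~\eqref{eq:braid}, given by post-composing a fusion tree with the braiding $c_{V_\alpha,V_\alpha}$ on adjacent tensor factors, becomes the restriction of the $R$-matrix action on $V_\alpha^{\otimes n}$ to this highest weight subspace; the restriction is well defined because each $c_{V_\alpha,V_\alpha}$ is a morphism in $\cat$ and hence commutes with $E$, $F$, and $H$, so it preserves both the weight and the condition of being annihilated by $E$.

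Next I would invoke the quantum--homological dictionary. At generic $q$ the Verma-module version is Martel's theorem on the extended Lawrence representations~\cite{Homological-Martel}, which identifies the braid action on the weight-$\gamma$ highest weight subspace of a tensor power of the Verma module with the Lawrence representation $\mathcal{H}_{n,m}$ over $\mathbb{Z}[\mathtt{x}^{\pm 1},\mathtt{q}^{\pm 1}]$, where $\mathtt{x}$ records the $K$-eigenvalue (winding about the punctures) and $\mathtt{q}$ records the quadratic braiding factor. Imposing $E^r=F^r=0$ collapses the Verma module to the $r$-dimensional $V_\alpha$, and this matches precisely the homological truncation $\mathcal{H}_{n,m}/\mathcal{H}_{n,m}^{\geq r}$, where the multiforks indexed by $E_{n,m}^{\geq r}$ vanish and leave the basis indexed by $E_{n,m}^r$. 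Matching the substitution in Definition~\ref{def:l2r} then only requires inserting the highest weight $\alpha+r-1$ of $V_\alpha$ for the $K$-weight, producing $\mathtt{x}=\xi_r^{-2(\alpha+r-1)}$, together with $\mathtt{q}=-\xi_r^2=-q^2$ read off from the diagonal part of the $R$-matrix in~\eqref{eq:R}.

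Finally I would account for the word \emph{projectively}. The $R$-matrix braiding and the homological monodromy differ by the framing/twist data carried by $q^{H\otimes H/2}$ and the ribbon element $\theta$; on a fixed total-weight space this discrepancy is a scalar attached to each generator $\sigma_i$ that is independent of the basis element, so it assembles into a character $\Br_n\to\mathbb{C}^\times$ (an overall power of $q$ governed by the writhe, exactly of the shape that reappears as the prefactor in Theorem~\ref{thm:Ito}). Dividing out this character turns $\Psi$ into a genuine $\Br_n$-equivariant isomorphism; retaining it gives the projective isomorphism asserted. The hard part will be the precise comparison in the third step: one must either re-derive the Kohno--Martel isomorphism by computing the $R$-matrix action on the fusion-tree basis and matching it term-by-term with the geometric monodromy of the multiforks, or carefully reconcile the conventions (coproduct, the choice of $R$ versus $R^{-1}$, and the orientation of the forks) between the present setup and~\cite{Homological-Martel}, which is where the sign and power-of-$q$ bookkeeping that isolates the projective factor is most delicate.
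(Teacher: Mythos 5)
Your proposal is correct and follows essentially the same route as the paper's proof: identify $\mathcal{H}_{n,m,\alpha}^r$ with the highest weight subspace $W_{n,m}=\mathrm{Ker}(K-q^{n(\alpha+r-1)-2m}\id)\cap\mathrm{Ker}\,E\subset V_\alpha^{\otimes n}$ by evaluating on the highest weight vector, invoke the Ito--Martel identification of the braid action on this subspace with the truncated Lawrence representation at the specialization $\mathtt{q}=-q^2$, $\mathtt{x}=q^{-2(\alpha+r-1)}$, and attribute the projective (rather than honest) isomorphism to the $q^{H\otimes H/2}$ normalization of the $R$-matrix. The only cosmetic difference is your opening basis-matching map $\Psi$, which the paper does not need since the cited results already supply the intertwiner.
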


\begin{proof}
Since morphisms in this category preserve weights, respect the action of the quantum group, and $V_{\gamma_{n-1}}$ is simple, any map $Y\in \mathcal{H}_{n,m,\alpha}^r$ is determined by the image $w_Y:=Y(v_0)$ of the highest weight vector $v_0 \in V_{\gamma_{n-1}}$ of weight $n(\alpha + r-1) - 2m$. In other words, $\mathcal{H}_{n,m,\alpha}^r$ as a vector space is isomorphic to the highest weight subspace $W_{n,m}:=\mathrm{Ker}(K - q^{n(\alpha+r-1)-2m}\id)\cap \mathrm{Ker}E \subset V_{\alpha}^{\otimes n}$ and the $\mathbb{C}$-span of pairwise distinct $Y_{\mathbf{p}}$ and $Y_{\mathbf{p'}}$ corresponds to pairwise linearly independent one-dimensional subspaces of $W_{n,m}$. Furthermore, we also have an isomorphism of braid group representations because the braid group action on an element $Y\in \mathcal{H}_{n,m,\alpha}^r$ is completely determined by the output of the braid acting on its highest weight vector $w_Y \in W_{n,m}$.

By~\cite[Theorem~4.2]{Ito16} (more generally, see~\cite[Corollary~7.1]{Homological-Martel}), the braid group representation on the highest weight subspace $W_{n,m}$ is projectively isomorphic to the truncated Lawrence representation $l_{n,m}$ specialized at $t=-q^2$ and $x=q^{-2\lambda}$ where $\lambda = \alpha + r - 1$ is the highest weight of $V_\alpha$. We clarify that our isomorphism is projective as we do not alter the $R$-matrix by $q^{-\frac{H \otimes H}{2}}$.
\end{proof}

\begin{remark}
Quantum states of a quantum mechanical system correspond to an element of the \emph{projective} Hilbert space $P(\mathcal{H}_{n,m,\alpha}^r)$. Consequently, we are interested in studying \emph{projective} representations of the braid group. So the \emph{projective} isomorphism above is not a concern in the context of quantum computation.
\end{remark}

\subsection{Graphical Calculus} \label{subsec:graphical-calculus}
As mentioned earlier, the original quantum interpretation of the (truncated) Lawrence representations come from certain highest weight subspaces of tensor powers of the Verma modules associated to quantum $\slt$. For the remainder of this section, we leverage our new interpretation of the $q$-specialized Lawrence representations as fusion trees to reprove Ito's formula through the usage of graphical calculus techniques.

We introduce four graphical moves.\par
    \centerline{\makebox[4cm]{Pinch Move}  (Lemma~\ref{lemma:fusion-identity})\par}
    \centerline{\makebox[4cm]{Mirror Reflection}  (Lemma~\ref{lemma:fusion-channel})\par}
    \centerline{\makebox[4cm]{Braid Pop}  (Lemma~\ref{lemma:character-identity})\par}
    \centerline{\makebox[4cm]{Pruning}  (Lemma~\ref{lemma:isotoped-bubble})\par}

\begin{lemma}[Pinch Move]  \label{lemma:fusion-identity}
For generic values $\alpha_1, \dots, \alpha_n$ such that $\sum\limits_{i=1}^{j} \alpha_i$ are also generic for $j=2, \dots, n$, we have the following identity:
\begin{align*}
\id_{V_{\alpha_1} \otimes \cdots \otimes V_{\alpha_n}}
&= \sum\limits_{\substack{\gamma_1 \in \alpha_1 + \alpha_2 + H_r \\ \gamma_2 \in \gamma_1 + \alpha_3 + H_r \\ \vdots \\ \gamma_{n-1} \in \gamma_{n-2} + \alpha_n + H_r}} \md(\gamma_1) \md(\gamma_2) \cdots \md(\gamma_{n-1})
\begin{tikzpicture}[baseline=0, thick, scale=0.6, shift={(0,0)}]
\begin{scope}[decoration={markings, mark=at position 0.5 with {\arrow{>}}}]
    \draw[ultra thick, black, postaction=decorate] (0,3.5) to (0.8125,2.75);
    \draw[ultra thick, black, postaction=decorate] (1,3.5) to (0.8125,2.75);
    \draw[ultra thick, black, postaction=decorate] (0.8125,2.75) to (1.625,2);
    \draw[ultra thick, black, postaction=decorate] (2,3.5) to (1.625,2);
    \draw[ultra thick, black, dotted] (1.625, 2) to (2.4375, 1.25);
    \draw[ultra thick, black, postaction=decorate] (3, 3.5) to (2.4375, 1.25);
    \draw[ultra thick, black, postaction=decorate] (2.4375,1.25) to (3.25,0.5);
    \draw[ultra thick, black, postaction=decorate] (4, 3.5) to (3.25,0.5);
    \draw[ultra thick, black, postaction=decorate] (3.25,0.5) to (3.25,-0.5);
\end{scope}
\draw (0,3.5) node[above] {$\scs \alpha_1$};
\draw (1,3.5) node[above] {$\scs \alpha_2$};
\draw (2,3.5) node[above] {$\scs \alpha_3$};
\draw (3,3.5) node[above] {$\scs \cdots$};
\draw (4,3.5) node[above] {$\scs \alpha_{n}$};
\draw (0.75,2) node {$\scs \gamma_{1}$};
\draw (2.5,0.5) node {$\scs \gamma_{n-2}$};
\draw (3.5,0) node[right] {$\scs \gamma_{n-1}$};
\begin{scope}[decoration={markings, mark=at position 0.75 with {\arrow{>}}}]
    \draw[ultra thick, black, postaction=decorate] (0.8125,-2.75) to (0,-3.5);
    \draw[ultra thick, black, postaction=decorate] (0.8125,-2.75) to (1,-3.5);
    \draw[ultra thick, black, postaction=decorate] (1.625,-2) to (0.8125,-2.75);
    \draw[ultra thick, black, postaction=decorate] (1.625,-2) to (2,-3.5);
    \draw[ultra thick, black, dotted] (2.4375,-1.25) to (1.625,-2);
    \draw[ultra thick, black, postaction=decorate] (2.4375,-1.25) to (3,-3.5);
    \draw[ultra thick, black, postaction=decorate] (3.25,-0.5) to (2.4375,-1.25);
    \draw[ultra thick, black, postaction=decorate] (3.25,-0.5) to (4,-3.5);
\end{scope}
\draw (0,-3.5) node[below] {$\scs \alpha_1$};
\draw (1,-3.5) node[below] {$\scs \alpha_2$};
\draw (2,-3.5) node[below] {$\scs \alpha_3$};
\draw (3,-3.5) node[below] {$\scs \cdots$};
\draw (4,-3.5) node[below] {$\scs \alpha_{n}$};
\draw (0.75,-2) node {$\scs \gamma_{1}$};
\draw (2.5,-0.5) node {$\scs \gamma_{n-2}$};
\end{tikzpicture}.
\end{align*}
\end{lemma}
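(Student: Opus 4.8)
The plan is to prove the identity by induction on the number of strands $n$, bootstrapping from the two-strand case by repeatedly inserting a local completeness relation. The one essential geometric input is that the genericity hypotheses force every tensor product that appears to lie in the semisimple part of $\cat$: by the decomposition recalled in the proof of the first lemma above, for generic $\alpha,\beta$ with $\alpha+\beta$ generic one has the multiplicity-free splitting $V_\alpha\otimes V_\beta\cong\bigoplus_{\gamma\in\alpha+\beta+H_r}V_\gamma$, and the spaces $\Hom(V_\gamma,V_\alpha\otimes V_\beta)$ and $\Hom(V_\alpha\otimes V_\beta,V_\gamma)$ are each one-dimensional, spanned by $Y_\gamma^{\alpha,\beta}$ and $Y^\gamma_{\alpha,\beta}$ from \eqref{eq:conventions}.

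For the base case $n=2$, since $V_{\alpha_1}\otimes V_{\alpha_2}$ is a multiplicity-free sum of the simple modules $V_{\gamma_1}$, the identity decomposes into a complete system of orthogonal idempotents $\id=\sum_{\gamma_1}p_{\gamma_1}$. As $p_{\gamma_1}$ factors through the $\gamma_1$-isotypic component and the relevant $\Hom$-spaces are one-dimensional, we must have $p_{\gamma_1}=c_{\gamma_1}\,Y_{\gamma_1}^{\alpha_1,\alpha_2}\circ Y^{\gamma_1}_{\alpha_1,\alpha_2}$ for a scalar $c_{\gamma_1}$. Writing the bubble $Y^{\gamma_1}_{\alpha_1,\alpha_2}\circ Y_{\gamma_1}^{\alpha_1,\alpha_2}=b_{\gamma_1}\,\id_{V_{\gamma_1}}$ (a scalar by Schur's lemma), idempotency of $p_{\gamma_1}$ forces $c_{\gamma_1}=b_{\gamma_1}^{-1}$, so the claim reduces to the bubble evaluation $b_{\gamma_1}=\md(\gamma_1)^{-1}$. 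I would establish this from the explicit formulas for the fusion morphisms in the conventions of \cite{CGP14, CM} together with the module action \eqref{E:BasisV}, by evaluating the composite $Y^{\gamma_1}\circ Y_{\gamma_1}$ on a highest weight vector of $V_{\gamma_1}$ and reading off the scalar. This normalization computation is the main obstacle: the entire weight of the $\md(\gamma)$ prefactors rests on it, while everything else is formal.

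For the inductive step, suppose the identity holds on $n-1$ strands, factoring $\id_{V_{\alpha_1}\otimes\cdots\otimes V_{\alpha_{n-1}}}=\sum_{\boldsymbol\gamma}\md(\gamma_1)\cdots\md(\gamma_{n-2})\,T_{\boldsymbol\gamma}\circ T^{\boldsymbol\gamma}$, where $T^{\boldsymbol\gamma}$ is the left-comb merge onto $V_{\gamma_{n-2}}$ and $T_{\boldsymbol\gamma}$ its mirror splitting. Tensoring on the right with $\id_{V_{\alpha_n}}$ and inserting the two-strand completeness $\id_{V_{\gamma_{n-2}}\otimes V_{\alpha_n}}=\sum_{\gamma_{n-1}\in\gamma_{n-2}+\alpha_n+H_r}\md(\gamma_{n-1})\,Y_{\gamma_{n-1}}^{\gamma_{n-2},\alpha_n}\circ Y^{\gamma_{n-1}}_{\gamma_{n-2},\alpha_n}$ into the middle of this factorization yields
\begin{equation*}
\id_{V_{\alpha_1}\otimes\cdots\otimes V_{\alpha_n}}=\sum_{\boldsymbol\gamma}\md(\gamma_1)\cdots\md(\gamma_{n-1})\,\bigl[(T_{\boldsymbol\gamma}\otimes\id)\circ Y_{\gamma_{n-1}}\bigr]\circ\bigl[Y^{\gamma_{n-1}}\circ(T^{\boldsymbol\gamma}\otimes\id)\bigr].
\end{equation*}
The composite $Y^{\gamma_{n-1}}\circ(T^{\boldsymbol\gamma}\otimes\id)\colon V_{\alpha_1}\otimes\cdots\otimes V_{\alpha_n}\to V_{\gamma_{n-1}}$ is exactly the $n$-strand left-comb merge with intermediate channels $\gamma_1,\dots,\gamma_{n-1}$, and $(T_{\boldsymbol\gamma}\otimes\id)\circ Y_{\gamma_{n-1}}$ is its mirror, so their composition is the diagram on the right-hand side of the lemma; the admissibility constraints $\gamma_k\in\gamma_{k-1}+\alpha_{k+1}+H_r$ are precisely those produced at each fusion, so the index set matches as well. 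Throughout, the partial-sum genericity hypothesis is used to guarantee that each intermediate channel $\gamma_k$ remains generic, ensuring that every fusion $V_{\gamma_{k-1}}\otimes V_{\alpha_{k+1}}$ is semisimple and multiplicity-free so that the base case applies verbatim at each stage. This completes the induction.
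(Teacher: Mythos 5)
Your proof is correct, and its engine is the same as the paper's: the inductive step --- tensoring the $(n-1)$-strand factorization with $\id_{V_{\alpha_n}}$ and inserting the two-strand completeness relation for $\id_{V_{\gamma_{n-2}}\otimes V_{\alpha_n}}$, with the partial-sum genericity hypothesis guaranteeing each intermediate channel $\gamma_k$ stays generic --- is exactly how the paper proceeds (its red-dashed box is your insertion point). Where you genuinely diverge is the base case. The paper disposes of $n=2$ by citing \cite[Equation (Ni)]{CGP14} outright, whereas you re-derive it: multiplicity-freeness of $V_{\alpha_1}\otimes V_{\alpha_2}$ plus Schur's lemma and one-dimensionality of the relevant $\Hom$-spaces force each isotypic projection to have the form $c_{\gamma_1}\,Y^{\alpha_1,\alpha_2}_{\gamma_1}\circ Y_{\alpha_1,\alpha_2}^{\gamma_1}$, and idempotency pins $c_{\gamma_1}=b_{\gamma_1}^{-1}$, reducing everything to the bubble evaluation $Y_{\alpha_1,\alpha_2}^{\gamma_1}\circ Y^{\alpha_1,\alpha_2}_{\gamma_1}=\md(\gamma_1)^{-1}\id_{V_{\gamma_1}}$, which you flag as the main obstacle and only sketch. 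That deferred normalization is not a substantive gap: it is precisely what relation (Ni) packages, and it is the same ``bubble pop'' that the paper itself invokes without proof in Lemma~\ref{lemma:character-identity}; in the conventions of \cite{CGP14, CM} it is a fixed property of the trivalent morphisms \eqref{eq:conventions}, verifiable exactly as you describe (evaluate on a highest weight vector using \eqref{E:BasisV}). What your route buys is transparency: it isolates the single non-formal input --- the normalization of the bubble against the modified dimension --- and exhibits everything else (idempotent decomposition, matching of index sets, induction) as pure Schur-lemma formalism; it also explains \emph{why} the coefficients must be $\md(\gamma_k)$ rather than merely asserting it. What the paper's route buys is brevity. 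To make your write-up complete, either carry out the highest-weight computation or replace it with the citation of (Ni); as it stands the $\md(\gamma)$ prefactors rest on an announced but unperformed calculation.
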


\begin{proof}
Since $\alpha_1$, $\alpha_2$, and $\alpha_1+\alpha_2$ are generic, the base case $n=2$ of the induction is satisfied by~\cite[Equation (Ni)]{CGP14}.
\begin{align*}
\id_{V_{\alpha_1} \otimes V_{\alpha_2}}
= \begin{tikzpicture}[baseline=0, thick, scale=0.75, shift={(0,0)}]
\begin{scope}[decoration={markings, mark=at position 0.5 with {\arrow{>}}}]
    \draw[ultra thick, black, postaction=decorate] (0.5,1.2) to (0.5,-1.2);
    \draw[ultra thick, black, postaction=decorate] (1.5,1.2) to (1.5,-1.2);
\end{scope}
\draw (0.5,0) node[left] {$\scs \alpha_1$};
\draw (1.5,0) node[right] {$\scs \alpha_2$};
\end{tikzpicture}
&= \sum\limits_{\gamma_1 \in \alpha_1 + \alpha_2 + H_r} \md(\gamma_1)
\begin{tikzpicture}[baseline=0, thick, scale=0.75, shift={(0,0)}]
\begin{scope}[decoration={markings, mark=at position 0.75 with {\arrow{>}}}]
    \draw[ultra thick, black, postaction=decorate] (0,1.2) to (.6,0.6);
    \draw[ultra thick, black, postaction=decorate] (.6,0.6) to (.6,-0.6);
    \draw[ultra thick, black, postaction=decorate] (1.2,1.2) to (.6,0.6);
    \draw[ultra thick, black, postaction=decorate] (.6,-0.6) to (0,-1.2);
    \draw[ultra thick, black, postaction=decorate] (.6,-0.6) to (1.2,-1.2);
\end{scope}
\draw (0,1.25) node[above] {$\scs \alpha_1$};
\draw (1,1.25) node[above] {$\scs \alpha_2$};
\draw (0.5,0.2) node[right] {$\scs \gamma_1$};
\draw (0,-1.25) node[below] {$\scs \alpha_1$};
\draw (1,-1.25) node[below] {$\scs \alpha_2$};
\end{tikzpicture}.
\end{align*}
Next, observe that applying (Ni) to the red-dashed box gives us the following equality. Since $\sum\limits_{i=1}^{n-1} \alpha_i$ is assumed to be generic, any admissible choices of $\gamma_{n-2}$ has to be generic as well.
\begin{align*}
\hackcenter{\begin{tikzpicture}[baseline=0, thick, scale=0.6, shift={(0,0)}]
\begin{scope}[decoration={markings, mark=at position 0.5 with {\arrow{>}}}]
    \draw[ultra thick, black, postaction=decorate] (0,3.5) to (0.8125,2.75);
    \draw[ultra thick, black, postaction=decorate] (1,3.5) to (0.8125,2.75);
    \draw[ultra thick, black, postaction=decorate] (0.8125,2.75) to (1.625,2);
    \draw[ultra thick, black, postaction=decorate] (2,3.5) to (1.625,2);
    \draw[ultra thick, black, dotted] (1.625, 2) to (2.4375, 1.25);
    \draw[ultra thick, black, postaction=decorate] (3, 3.5) to (2.4375, 1.25);
    \draw[ultra thick, black, postaction=decorate] (2.4375,1.25) to (2.4375,-1.25);
    \draw[ultra thick, black, postaction=decorate] (4,3.5) to (4,-3.5);
\end{scope}
\draw (0,3.5) node[above] {$\scs \alpha_1$};
\draw (1,3.5) node[above] {$\scs \alpha_2$};
\draw (2,3.5) node[above] {$\scs \cdots$};
\draw (3,3.5) node[above] {$\scs \alpha_{n-1}$};
\draw (0.75,2) node {$\scs \gamma_{1}$};
\draw[red,dashed] (0.75,0.8) rectangle (5.25,-0.8);
\draw (2.5,0.5) node[left] {$\scs \gamma_{n-2}$};
\draw (4,0.5) node[right] {$\scs \alpha_{n}$};
\begin{scope}[decoration={markings, mark=at position 0.75 with {\arrow{>}}}]
    \draw[ultra thick, black, postaction=decorate] (0.8125,-2.75) to (0,-3.5);
    \draw[ultra thick, black, postaction=decorate] (0.8125,-2.75) to (1,-3.5);
    \draw[ultra thick, black, postaction=decorate] (1.625,-2) to (0.8125,-2.75);
    \draw[ultra thick, black, postaction=decorate] (1.625,-2) to (2,-3.5);
    \draw[ultra thick, black, dotted] (2.4375,-1.25) to (1.625,-2);
    \draw[ultra thick, black, postaction=decorate] (2.4375,-1.25) to (3,-3.5);
\end{scope}
\draw (0,-3.5) node[below] {$\scs \alpha_1$};
\draw (1,-3.5) node[below] {$\scs \alpha_2$};
\draw (2,-3.5) node[below] {$\scs \cdots$};
\draw (3,-3.5) node[below] {$\scs \alpha_{n-1}$};
\draw (0.75,-2) node {$\scs \gamma_{1}$};
\end{tikzpicture} }
&~=~\sum\limits_{\gamma_{n-1} \in \gamma_{n-2} + \alpha_n + H_r} \md(\gamma_{n-1})
\hackcenter{\begin{tikzpicture}[baseline=0, thick, scale=0.6, shift={(0,0)}]
\begin{scope}[decoration={markings, mark=at position 0.5 with {\arrow{>}}}]
    \draw[ultra thick, black, postaction=decorate] (0,3.5) to (0.8125,2.75);
    \draw[ultra thick, black, postaction=decorate] (1,3.5) to (0.8125,2.75);
    \draw[ultra thick, black, postaction=decorate] (0.8125,2.75) to (1.625,2);
    \draw[ultra thick, black, postaction=decorate] (2,3.5) to (1.625,2);
    \draw[ultra thick, black, dotted] (1.625, 2) to (2.4375, 1.25);
    \draw[ultra thick, black, postaction=decorate] (3, 3.5) to (2.4375, 1.25);
    \draw[ultra thick, black, postaction=decorate] (2.4375,1.25) to (3.25,0.5);
    \draw[ultra thick, black, postaction=decorate] (4, 3.5) to (3.25,0.5);
    \draw[ultra thick, black, postaction=decorate] (3.25,0.5) to (3.25,-0.5);
\end{scope}
\draw (0,3.5) node[above] {$\scs \alpha_1$};
\draw (1,3.5) node[above] {$\scs \alpha_2$};
\draw (2,3.5) node[above] {$\scs \alpha_3$};
\draw (3,3.5) node[above] {$\scs \cdots$};
\draw (4,3.5) node[above] {$\scs \alpha_{n}$};
\draw (0.75,2) node {$\scs \gamma_{1}$};
\draw (2.5,0.5) node {$\scs \gamma_{n-2}$};
\draw (3.5,0) node[right] {$\scs \gamma_{n-1}$};
\begin{scope}[decoration={markings, mark=at position 0.75 with {\arrow{>}}}]
    \draw[ultra thick, black, postaction=decorate] (0.8125,-2.75) to (0,-3.5);
    \draw[ultra thick, black, postaction=decorate] (0.8125,-2.75) to (1,-3.5);
    \draw[ultra thick, black, postaction=decorate] (1.625,-2) to (0.8125,-2.75);
    \draw[ultra thick, black, postaction=decorate] (1.625,-2) to (2,-3.5);
    \draw[ultra thick, black, dotted] (2.4375,-1.25) to (1.625,-2);
    \draw[ultra thick, black, postaction=decorate] (2.4375,-1.25) to (3,-3.5);
    \draw[ultra thick, black, postaction=decorate] (3.25,-0.5) to (2.4375,-1.25);
    \draw[ultra thick, black, postaction=decorate] (3.25,-0.5) to (4,-3.5);
\end{scope}
\draw (0,-3.5) node[below] {$\scs \alpha_1$};
\draw (1,-3.5) node[below] {$\scs \alpha_2$};
\draw (2,-3.5) node[below] {$\scs \alpha_3$};
\draw (3,-3.5) node[below] {$\scs \cdots$};
\draw (4,-3.5) node[below] {$\scs \alpha_{n}$};
\draw (0.75,-2) node {$\scs \gamma_{1}$};
\draw (2.5,-0.5) node {$\scs \gamma_{n-2}$};
\end{tikzpicture} }
\end{align*}
The inductive step immediately follows from the previous equality.
\end{proof}

We explain that Lemma~\ref{lemma:fusion-identity} fails without the $\sum_{i=1}^j \alpha_i$ generic assumption. Recall that one may only guarantee a semisimple decomposition if the object lives in the generic part of the category. As a counterexample, consider the case when $r=2$ and choose any generic value $\alpha$. Then, $V_\alpha$ and $V_{-\alpha}$ are generic objects whereas $V_\alpha \otimes V_{-\alpha}\in\mathcal{C}_{\overline{0}}$ will live in the singular part of the category. Their tensor product, by~\cite[Corollary 6.3]{CGP2}, is equivalent to $V_\alpha \otimes V_{-\alpha}\cong V_0 \otimes V_0$. Since $V_0 = S_{1}$, using~\cite[Lemma 8.1]{CGP2}, we can deduce that their tensor product is $V_0 \otimes V_0 \cong P_0$ non-semisimple. Therefore we cannot derive an identity involving ribbon graphs colored by simple objects from the generic part of the category like above.

\begin{lemma}[Mirror Reflection]  \label{lemma:fusion-channel}
Suppose that the following graph is a nonzero morphism in $\mathrm{Hom}(V_{\psi_0}, V_{\gamma_0})$:
\begin{equation*}
\hackcenter{\begin{tikzpicture}[baseline=0, thick, scale=0.6, shift={(0,0)}]
\begin{scope}[decoration={markings, mark=at position 0.5 with {\arrow{>}}}]
    \draw[ultra thick, black, postaction=decorate] (0,4.5) to (0.8125,3.75);
    \draw[ultra thick, black, postaction=decorate] (1,4.5) to (0.8125,3.75);
    \draw[ultra thick, black, postaction=decorate] (0.8125,3.75) to (1.625,3);
    \draw[ultra thick, black, postaction=decorate] (2,4.5) to (1.625,3);
    \draw[ultra thick, black, dotted] (1.625, 3) to (2.4375, 2.25);
    \draw[ultra thick, black, postaction=decorate] (3, 4.5) to (2.4375, 2.25);
    \draw[ultra thick, black, postaction=decorate] (2.4375,2.25) to (3.25,1.5);
    \draw[ultra thick, black, postaction=decorate] (4, 4.5) to (3.25,1.5);
    \draw[ultra thick, black, postaction=decorate] (3.25,1.5) to (3.25,0.5);
\end{scope}
\draw (-0.5,5) node {$\scs \gamma_0$};
\draw (0.65,5) node {$\scs \alpha_1$};
\draw (1.65,5) node {$\scs \alpha_2$};
\draw (2.75,5) node {$\scs \cdots$};
\draw (4.5,5) node {$\scs \alpha_{n-1}$};
\draw (1.15,2.85) node {$\scs \gamma_{1}$};
\draw (2.35,1.5) node {$\scs \gamma_{n-2}$};
\draw (4.25,0.9) node {$\scs \gamma_{n-1}$};
\draw (2.75,-0.5) rectangle (3.75,0.5);
\draw (3.25,0) node {$\scs f$};
\begin{scope}[decoration={markings, mark=at position 0.9 with {\arrow{>}}}]
    \draw[ultra thick, black, postaction=decorate] (3.25,-0.5) to (3.25,-1.5);
    \draw[ultra thick, black, postaction=decorate] (0.8125,-3.75) to (0,-4.5);
    \draw[ultra thick, black, postaction=decorate] (0.8125,-3.75) to (1,-4.5);
    \draw[ultra thick, black, postaction=decorate] (1.625,-3) to (0.8125,-3.75);
    \draw[ultra thick, black, postaction=decorate] (1.625,-3) to (2,-4.5);
    \draw[ultra thick, black, dotted] (2.4375,-2.25) to (1.625,-3);
    \draw[ultra thick, black, postaction=decorate] (2.4375,-2.25) to (3,-4.5);
    \draw[ultra thick, black, postaction=decorate] (3.25,-1.5) to (2.4375,-2.25);
    \draw[ultra thick, black, postaction=decorate] (3.25,-1.5) to (4,-4.5);
\end{scope}
\draw (-0.5,-5) node {$\scs \psi_0$};
\draw (0.65,-5) node {$\scs \alpha_1$};
\draw (1.65,-5) node {$\scs \alpha_2$};
\draw (2.75,-5) node {$\scs \cdots$};
\draw (4.5,-5) node {$\scs \alpha_{n-1}$};
\draw (1.15,-2.85) node {$\scs \psi_{1}$};
\draw (2.35,-1.5) node {$\scs \psi_{n-2}$};
\draw (4.25,-0.9) node {$\scs \psi_{n-1}$};
\draw[ultra thick, black] (4,4.5) to [out=90,in=90] (5,4.5);
\draw[ultra thick, black] (4,-4.5) to [out=-90,in=-90] (5,-4.5);
\draw[ultra thick, black] (5,4.5) to (5,-4.5);
\draw[ultra thick, black] (3,4.5) to [out=90,in=90] (6,4.5);
\draw[ultra thick, black] (3,-4.5) to [out=-90,in=-90] (6,-4.5);
\draw[ultra thick, black] (6,4.5) to (6,-4.5);
\draw[ultra thick, black] (2,4.5) to [out=90,in=90] (7,4.5);
\draw[ultra thick, black] (2,-4.5) to [out=-90,in=-90] (7,-4.5);
\draw[ultra thick, black] (7,4.5) to (7,-4.5);
\draw[ultra thick, black] (1,4.5) to [out=90,in=90] (8,4.5);
\draw[ultra thick, black] (1,-4.5) to [out=-90,in=-90] (8,-4.5);
\draw[ultra thick, black] (8,4.5) to (8,-4.5);
\draw[ultra thick, black] (0,4.5) to (0,6);
\draw[ultra thick, black] (0,-4.5) to (0,-6);
\end{tikzpicture} }.
\end{equation*}
Then, $\gamma_i = \psi_i$ for $i=0,\dots, n-1$.
\end{lemma}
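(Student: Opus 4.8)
The plan is to induct on $n$, closing the nested $\alpha$-loops one at a time starting from the innermost one, and to read off each equality $\gamma_i=\psi_i$ from Schur's lemma applied to a single strand joining two simple objects. Two standing facts will be used repeatedly. First, under the genericity hypotheses all of the modules $V_{\gamma_i}$ and $V_{\psi_i}$ are simple, so that $\Hom(V_a,V_b)=0$ whenever $a\neq b$ (since then $V_a\not\cong V_b$); this is the only input that produces the equalities. Second, the ribbon structure recalled in Section~3 lets me interpret the caps and cups as the duality morphisms $\ev,\coev,\tev,\tcoev$, and lets me evaluate the nested closures one loop at a time, innermost first, since partial traces over disjoint tensor factors commute.

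For the base case $n=1$ there are no $\alpha$-legs and the diagram is just the box $f\in\Hom(V_{\psi_0},V_{\gamma_0})$; as it is assumed nonzero, Schur's lemma forces $\gamma_0=\psi_0$. For the inductive step I first observe that the central box $f$ sits on a single strand carrying $\gamma_{n-1}$ above and $\psi_{n-1}$ below, so $f\in\Hom(V_{\psi_{n-1}},V_{\gamma_{n-1}})$; a nonzero graph requires $f\neq 0$, whence $\gamma_{n-1}=\psi_{n-1}$. I then close the innermost loop $\alpha_{n-1}$. This loop caps off the $\alpha_{n-1}$-legs of the two trivalent vertices adjacent to $f$, and the corresponding partial trace fuses those two vertices together with $f$ into a single effective box $f'$ placed on the strand now relating $\gamma_{n-2}$ (top) to $\psi_{n-2}$ (bottom). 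What is left is exactly a diagram of the same shape with $n$ replaced by $n-1$: the truncated fusion tree on $\gamma_0,\alpha_1,\dots,\alpha_{n-2}$, the box $f'$, the truncated splitting tree on $\psi_0,\alpha_1,\dots,\alpha_{n-2}$, and the still-to-be-closed loops $\alpha_1,\dots,\alpha_{n-2}$. Because nested traces may be taken innermost first, this $(n-1)$-diagram is equal to the original morphism and hence nonzero, so the inductive hypothesis yields $\gamma_i=\psi_i$ for $i=0,\dots,n-2$; combined with $\gamma_{n-1}=\psi_{n-1}$ this completes the induction.

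The step requiring the most care is the reduction move: I must verify that closing $\alpha_{n-1}$ returns a single diagram of the same form, rather than a linear combination, so that the inductive hypothesis applies verbatim. This is precisely where the ribbon/pivotal structure enters—the closure is one duality pairing and produces a well-defined effective box $f'$, whose actual value is irrelevant to the argument; all that matters is that $f'$ is a morphism between $V_{\gamma_{n-2}}$ and $V_{\psi_{n-2}}$. Beyond this bookkeeping I anticipate no genuine obstacle, since every nonvanishing constraint collapses, once enough loops are closed, to Schur's lemma on a strand joining two simple modules.
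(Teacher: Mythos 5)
Your proof is correct and is essentially the paper's argument: the paper isotopes the evaluation/coevaluation arcs in toward the coupon $f$ to exhibit exactly the nested partial closures you describe, observes that each intermediate morphism $V_{\psi_i}\to V_{\gamma_i}$ must be nonzero because the full diagram is, and applies Schur's lemma together with the simplicity of generic $V_\alpha$'s. Your induction on $n$, peeling off the innermost $\alpha_{n-1}$-loop to form the effective box $f'\in\Hom(V_{\psi_{n-2}},V_{\gamma_{n-2}})$, is just an inductive packaging of that same extraction, so the two proofs coincide in substance.
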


\begin{proof}
The diagram in the statement can be isotoped to the following picture
\begin{equation*}
\hackcenter{\begin{tikzpicture}[baseline=0, thick, scale=0.8, shift={(0,0)}]
\draw (-0.5,-0.5) rectangle (0.5,0.5);
\draw (0,0) node {$\scs f$};
\begin{scope}[decoration={markings, mark=at position 0.75 with {\arrow{>}}}]
\begin{scope}[shift={(-0.6,1.7)}]
    \draw[ultra thick, black, postaction=decorate] (0,0) to (0.6,-0.6);
    \draw[ultra thick, black, postaction=decorate] (1.2,0) to (0.6,- 0.6);
    \draw[ultra thick, black, postaction=decorate] (0.6,-0.6) to (0.6,-1.2);
    \draw[ultra thick, black] (2.2,0) to [out=90, in=0] (1.7,0.25);
    \draw[ultra thick, black] (1.7,0.25) to [out=180, in=90] (1.2,0);
\end{scope}
\begin{scope}[shift={(-1.2,2.9)}]
    \draw[ultra thick, black, dotted] (0,0) to (0.6,-0.6);
    \draw[ultra thick, black, postaction=decorate] (1.2,0) to (0.6,- 0.6);
    \draw[ultra thick, black] (0.6,-0.6) to (0.6,-1.2);
    \draw[ultra thick, black] (3.2,0) to [out=90, in=0] (2.7,0.25);
    \draw[ultra thick, black] (2.7,0.25) to (1.7,0.25);
    \draw[ultra thick, black] (1.7,0.25) to [out=180, in=90] (1.2,0);
\end{scope}
\begin{scope}[shift={(-1.8,4.1)}]
    \draw[ultra thick, black, postaction=decorate] (0,0) to (0.6,-0.6);
    \draw[ultra thick, black, postaction=decorate] (1.2,0) to (0.6,- 0.6);
    \draw[ultra thick, black, dotted] (0.6,-0.6) to (0.6,-1.2);
    \draw[ultra thick, black] (4.2,0) to [out=90, in=0] (3.7,0.25);
    \draw[ultra thick, black] (3.7,0.25) to (1.7,0.25);
    \draw[ultra thick, black] (1.7,0.25) to [out=180, in=90] (1.2,0);
\end{scope}
\begin{scope}[shift={(-2.4,5.3)}]
    \draw[ultra thick, black, postaction=decorate] (0,1) to (0,0);
    \draw[ultra thick, black] (0,0) to (0.6,-0.6);
    \draw[ultra thick, black, postaction=decorate] (1.2,0) to (0.6,- 0.6);
    \draw[ultra thick, black] (0.6,-0.6) to (0.6,-1.2);
    \draw[ultra thick, black] (5.2,0) to [out=90, in=0] (4.7,0.25);
    \draw[ultra thick, black] (4.7,0.25) to (1.7,0.25);
    \draw[ultra thick, black] (1.7,0.25) to [out=180, in=90] (1.2,0);
\end{scope}
\end{scope}
\begin{scope}[decoration={markings, mark=at position 0.75 with {\arrow{>}}}]
\begin{scope}[shift={(-0.6,-0.5)}]
    \draw[ultra thick, black, postaction=decorate] (0.6,0) to (0.6,-0.6);
    \draw[ultra thick, black, postaction=decorate] (0.6,-0.6) to (0,-1.2);
    \draw[ultra thick, black, postaction=decorate] (0.6,-0.6) to (1.2,-1.2);
    \draw[ultra thick, black] (2.2,-1.2) to [out=-90, in=0] (1.7,-1.45);
    \draw[ultra thick, black] (1.7,-1.45) to [out=-180, in=-90] (1.2,-1.2);
\end{scope}
\begin{scope}[shift={(-1.2,-1.7)}]
    \draw[ultra thick, black] (0.6,0) to (0.6,-0.6);
    \draw[ultra thick, black, dotted] (0.6,-0.6) to (0,-1.2);
    \draw[ultra thick, black, postaction=decorate] (0.6,-0.6) to (1.2,-1.2);
    \draw[ultra thick, black] (3.2,-1.2) to [out=-90, in=0] (2.7,-1.45);
    \draw[ultra thick, black] (2.7,-1.45) to (1.7,-1.45);
    \draw[ultra thick, black] (1.7,-1.45) to [out=-180, in=-90] (1.2,-1.2);
\end{scope}
\begin{scope}[shift={(-1.8,-2.9)}]
    \draw[ultra thick, black, dotted] (0.6,0) to (0.6,-0.6);
    \draw[ultra thick, black, postaction=decorate] (0.6,-0.6) to (0,-1.2);
    \draw[ultra thick, black, postaction=decorate] (0.6,-0.6) to (1.2,-1.2);
    \draw[ultra thick, black] (4.2,-1.2) to [out=-90, in=0] (3.7,-1.45);
    \draw[ultra thick, black] (3.7,-1.45) to (1.7,-1.45);
    \draw[ultra thick, black] (1.7,-1.45) to [out=-180, in=-90] (1.2,-1.2);
\end{scope}
\begin{scope}[shift={(-2.4,-4.1)}]
    \draw[ultra thick, black] (0.6,0) to (0.6,-0.6);
    \draw[ultra thick, black] (0.6,-0.6) to (0,-1.2);
    \draw[ultra thick, black, postaction=decorate] (0,-1.2) to (0,-2.2);
    \draw[ultra thick, black, postaction=decorate] (0.6,-0.6) to (1.2,-1.2);
    \draw[ultra thick, black] (5.2,-1.2) to [out=-90, in=0] (4.7,-1.45);
    \draw[ultra thick, black] (4.7,-1.45) to (1.7,-1.45);
    \draw[ultra thick, black] (1.7,-1.45) to [out=-180, in=-90] (1.2,-1.2);
\end{scope}
\end{scope}
\draw (-2.4,6.6) node {$\gamma_0$};
\draw (1.1,5.8) node {$\scs \alpha_1$};
\draw (1.1,4.6) node {$\scs \alpha_2$};
\draw (1.1,3.8) node {$\scs \vdots$};
\draw (1.1,2.9) node {$\scs \alpha_{n-2}$};
\draw (1.1,1.7) node {$\scs \alpha_{n-1}$};
\draw (-2.1,4.4) node {$\scs \gamma_{1}$};
\draw (-1.1,1.7) node {$\scs \gamma_{n-2}$};
\draw (-0.5,1) node {$\scs \gamma_{n-1}$};
\draw[ultra thick, black] (1.6, 1.7) to (1.6, -1.7);
\draw[ultra thick, black] (2, 2.9) to (2, -2.9);
\draw[ultra thick, black] (2.4, 4.1) to (2.4, -4.1);
\draw[ultra thick, black] (2.8, 5.3) to (2.8, -5.3);
\draw (-2.4,-6.6) node {$\psi_0$};
\draw (1.1,-5.8) node {$\scs \alpha_1$};
\draw (1.1,-4.6) node {$\scs \alpha_2$};
\draw (1.1,-3.8) node {$\scs \vdots$};
\draw (1.1,-2.9) node {$\scs \alpha_{n-2}$};
\draw (1.1,-1.7) node {$\scs \alpha_{n-1}$};
\draw (-2.1,-4.4) node {$\scs \psi_{1}$};
\draw (-1.1,-1.7) node {$\scs \psi_{n-2}$};
\draw (-0.5,-1) node {$\scs \psi_{n-1}$};
\draw[red,dashed] (-1.6,2.1) rectangle (1.8,-2.1);
\end{tikzpicture} }
\end{equation*}
by pulling the coevaluation
\begin{tikzpicture}[baseline=0, thick, scale=0.5, shift={(0,0)}]
\begin{scope}[decoration={markings, mark=at position 1 with {\arrow{>}}}]
    \draw[postaction=decorate] (0,0.5) to [out=-90, in=180] (0.5,0) to [out=0, in=-90] (1,0.5);
\end{scope}
\end{tikzpicture}~and evaluation 
\begin{tikzpicture}[baseline=0, thick, scale=0.5, shift={(0,0)}]
\begin{scope}[decoration={markings, mark=at position 1 with {\arrow{>}}}]
    \draw[postaction=decorate] (1,0) to [out=90, in=0] (0.5,0.5) to [out=180, in=90] (0,0);
\end{scope}
\end{tikzpicture}
~closer to the $f$ coupon. This diagram makes it clear that we can extract the intermediate morphisms $V_{\psi_i} \rightarrow V_{\gamma_i}$. For example, see the red-dashed rectangle to obtain $V_{\psi_{n-2}} \rightarrow V_{\gamma_{n-2}}$. Observe that these intermediate morphisms must be nonzero since we assumed that the full diagram evaluates to a nonzero morphism in $\mathrm{Hom}(V_{\psi_0}, V_{\gamma_0})$ under the ribbon functor. By the simplicity of $V_\alpha$ when $\alpha$ is generic, Schur's Lemma on a nonzero morphism between simples $V_{\psi_i} \rightarrow V_{\gamma_i}$ then informs us that $\gamma_i = \psi_i$ for $i=0, \dots, n-1$.
\end{proof}

\begin{lemma}[Braid Pop] \label{lemma:character-identity}
For any $\beta_n \in \Br_n$, we have the following identity:
\begin{equation*}
\sum\limits_{(\alpha, \gamma_1, \dots, \gamma_{n-1})\in P_{n,m,\alpha}^r} \md(\gamma_1)\cdots \md(\gamma_{n-1})
~\hackcenter{\begin{tikzpicture}[baseline=0, thick, scale=0.5, shift={(0,0)}]
\draw (0.5,.5) rectangle (5.5,-.5);
\draw (3,0) node {$\scs \beta_n$};
\begin{scope}[shift={(1,-5.5)}]
\begin{scope}[decoration={markings, mark=at position 0.6 with {\arrow{>}}}]
    \draw[ultra thick, black, postaction=decorate] (0,4.5) to (0.8125,3.75);
    \draw[ultra thick, black, postaction=decorate] (1,4.5) to (0.8125,3.75);
    \draw[ultra thick, black, postaction=decorate] (0.8125,3.75) to (1.625,3);
    \draw[ultra thick, black, postaction=decorate] (2,4.5) to (1.625,3);
    \draw[ultra thick, black, dotted] (1.625, 3) to (2.4375, 2.25);
    \draw[ultra thick, black, postaction=decorate] (3, 4.5) to (2.4375, 2.25);
    \draw[ultra thick, black, postaction=decorate] (2.4375,2.25) to (3.25,1.5);
    \draw[ultra thick, black, postaction=decorate] (4, 4.5) to (3.25,1.5);
    \draw[ultra thick, black, postaction=decorate] (3.25,1.5) to (3.25,0.5);
\end{scope}
\end{scope}
\begin{scope}[shift={(1,5.5)}]
\begin{scope}[decoration={markings, mark=at position 0.9 with {\arrow{>}}}]
    \draw[ultra thick, black, postaction=decorate] (3.25,-0.5) to (3.25,-1.5);
    \draw[ultra thick, black, postaction=decorate] (0.8125,-3.75) to (0,-4.5);
    \draw[ultra thick, black, postaction=decorate] (0.8125,-3.75) to (1,-4.5);
    \draw[ultra thick, black, postaction=decorate] (1.625,-3) to (0.8125,-3.75);
    \draw[ultra thick, black, postaction=decorate] (1.625,-3) to (2,-4.5);
    \draw[ultra thick, black, dotted] (2.4375,-2.25) to (1.625,-3);
    \draw[ultra thick, black, postaction=decorate] (2.4375,-2.25) to (3,-4.5);
    \draw[ultra thick, black, postaction=decorate] (3.25,-1.5) to (2.4375,-2.25);
    \draw[ultra thick, black, postaction=decorate] (3.25,-1.5) to (4,-4.5);
\end{scope}
\end{scope}
\draw[ultra thick, black] (1,0.5) to (1,1);
\draw[ultra thick, black] (2,0.5) to (2,1);
\draw[ultra thick, black] (3,0.5) to (3,1);
\draw[ultra thick, black] (4,0.5) to (4,1);
\draw[ultra thick, black] (5,0.5) to (5,1);
\draw[ultra thick, black] (1,-0.5) to (1,-1);
\draw[ultra thick, black] (2,-0.5) to (2,-1);
\draw[ultra thick, black] (3,-0.5) to (3,-1);
\draw[ultra thick, black] (4,-0.5) to (4,-1);
\draw[ultra thick, black] (5,-0.5) to (5,-1);
\draw (0.7,0.75) node {$\scs \alpha$};
\draw (1.7,0.75) node {$\scs \alpha$};
\draw (2.7,0.75) node {$\scs \alpha$};
\draw (3.5,0.75) node {$\scs \cdots$};
\draw (4.3,0.75) node {$\scs \alpha$};
\draw (5.3,0.75) node {$\scs \alpha$};
\draw (0.7,-0.75) node {$\scs \alpha$};
\draw (1.7,-0.75) node {$\scs \alpha$};
\draw (2.7,-0.75) node {$\scs \alpha$};
\draw (3.5,-0.75) node {$\scs \cdots$};
\draw (4.3,-0.75) node {$\scs \alpha$};
\draw (5.3,-0.75) node {$\scs \alpha$};
\draw (2.2,-2.5) node {$\scs \gamma_1$};
\draw (3.3,-4) node {$\scs \gamma_{n-2}$};
\draw (4.25,-5.25) node {$\scs \gamma_{n-1}$};
\draw (2.2,2.5) node {$\scs \gamma_1$};
\draw (3.3,4) node {$\scs \gamma_{n-2}$};
\draw (4.25,5.25) node {$\scs \gamma_{n-1}$};
\end{tikzpicture} }
~=~
\chi_{\rho_{\gamma_{n-1}}}(\beta_n)
\hackcenter{\begin{tikzpicture}[baseline=0, thick, scale=0.5, shift={(0,0)}]
\begin{scope}[shift={(1,-5.5)}]
\begin{scope}[decoration={markings, mark=at position 0.6 with {\arrow{>}}}]
    \draw[ultra thick, black, postaction=decorate] (0,5) to (0,-5);
\end{scope}
\draw (1,0) node {$\gamma_{n-1}$};
\end{scope}
\end{tikzpicture} }
.
\end{equation*}
where $\chi_{\rho_{\gamma_{n-1}}}$ is the character of $\rho_{\gamma_{n-1}}: \Br_n \rightarrow \End(\mathcal{H}_{n,m,\alpha}^r)$.
\end{lemma}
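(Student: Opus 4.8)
The plan is to read the left-hand diagram algebraically and recognize it as the trace of the braid action computed in the fusion-tree basis. Reading bottom to top, the summand indexed by a path $\mathbf{p}=(\alpha,\gamma_1,\dots,\gamma_{n-1})\in P_{n,m,\alpha}^r$ is the composite $\overline{Y}_{\mathbf{p}}\circ \rho_{\gamma_{n-1}}(\beta_n)\circ Y_{\mathbf{p}}\in\End(V_{\gamma_{n-1}})$, where $Y_{\mathbf{p}}\in\Hom(V_{\gamma_{n-1}},V_\alpha^{\otimes n})$ is the splitting tree of Equation~\eqref{eq:tree}, $\overline{Y}_{\mathbf{p}}\in\Hom(V_\alpha^{\otimes n},V_{\gamma_{n-1}})$ is its vertical mirror (the merging tree drawn on top), and $\beta_n$ acts on $V_\alpha^{\otimes n}$ via the braiding. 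Since the braiding is a $\overline{U}_q^H\mathfrak{sl}(2)$-module map it preserves the $V_{\gamma_{n-1}}$-isotypic part, so the representation $\rho_{\gamma_{n-1}}$ is precisely post-composition $Y\mapsto\beta_n\circ Y$. Writing $\beta_n\circ Y_{\mathbf{p}}=\sum_{\mathbf{p}'}M_{\mathbf{p}',\mathbf{p}}Y_{\mathbf{p}'}$ in the basis $\{Y_{\mathbf{p}}\}_{\mathbf{p}\in P_{n,m,\alpha}^r}$, by definition $\chi_{\rho_{\gamma_{n-1}}}(\beta_n)=\Tr M=\sum_{\mathbf{p}}M_{\mathbf{p},\mathbf{p}}$.

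The crucial ingredient is a biorthogonality (bubble) relation for the trees, namely $\overline{Y}_{\mathbf{p}}\circ Y_{\mathbf{q}}=\delta_{\mathbf{p},\mathbf{q}}\,\md(\mathbf{q})^{-1}\,\id_{V_{\gamma_{n-1}}}$, where I abbreviate $\md(\mathbf{q}):=\md(\gamma_1)\cdots\md(\gamma_{n-1})$. I would derive this from the Pinch Move (Lemma~\ref{lemma:fusion-identity}): precomposing the resolution of the identity $\id_{V_\alpha^{\otimes n}}=\sum_{\mathbf{p}}\md(\mathbf{p})\,Y_{\mathbf{p}}\circ\overline{Y}_{\mathbf{p}}$ with $Y_{\mathbf{q}}$ yields $Y_{\mathbf{q}}=\sum_{\mathbf{p}}\md(\mathbf{p})\,(\overline{Y}_{\mathbf{p}}\circ Y_{\mathbf{q}})\,Y_{\mathbf{p}}$. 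Each factor $\overline{Y}_{\mathbf{p}}\circ Y_{\mathbf{q}}$ lies in $\Hom(V_{\gamma_{n-1}(\mathbf{q})},V_{\gamma_{n-1}(\mathbf{p})})$, hence vanishes unless $\mathbf{p}$ and $\mathbf{q}$ share the same endpoint and is a scalar multiple of the identity when they do (Schur's Lemma, using simplicity of $V_\gamma$ at generic $\gamma$). Comparing coefficients against the linear independence of the basis $\{Y_{\mathbf{p}}\}$ then forces the stated scalar. The vanishing for $\mathbf{p}\neq\mathbf{q}$ is moreover exactly the content of the Mirror Reflection Lemma~\ref{lemma:fusion-channel} applied with $f=\id$.

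With biorthogonality in hand the computation is immediate. Expanding each summand gives
$$\overline{Y}_{\mathbf{p}}\circ\beta_n\circ Y_{\mathbf{p}}=\sum_{\mathbf{p}'}M_{\mathbf{p}',\mathbf{p}}\,\overline{Y}_{\mathbf{p}}\circ Y_{\mathbf{p}'}=M_{\mathbf{p},\mathbf{p}}\,\md(\mathbf{p})^{-1}\,\id_{V_{\gamma_{n-1}}},$$
so that in the weighted sum the modified-dimension factors cancel:
$$\sum_{\mathbf{p}\in P_{n,m,\alpha}^r}\md(\mathbf{p})\,\overline{Y}_{\mathbf{p}}\circ\beta_n\circ Y_{\mathbf{p}}=\sum_{\mathbf{p}}M_{\mathbf{p},\mathbf{p}}\,\id_{V_{\gamma_{n-1}}}=\chi_{\rho_{\gamma_{n-1}}}(\beta_n)\,\id_{V_{\gamma_{n-1}}}.$$
Identifying $\id_{V_{\gamma_{n-1}}}$ with the single strand colored $\gamma_{n-1}$ on the right-hand side completes the argument. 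I note that no projectivity subtlety enters here, since the same morphism $\beta_n$ appears on both sides.

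I expect the main obstacle to be the second step, the bubble relation with its precise normalization $\md(\mathbf{p})^{-1}$. Orthogonality of distinct trees is clean (Schur together with Mirror Reflection), but pinning down the exact scalar requires carefully propagating the Pinch Move through the basis as above, and it is here that the factors $\md(\gamma_i)$ must be tracked so that they cancel the weights in the final sum. Everything downstream — that the character equals the sum of diagonal matrix entries, and that post-composition by $\beta_n$ realizes $\rho_{\gamma_{n-1}}$ — is then formal.
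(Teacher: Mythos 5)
Your proof is correct, and its skeleton agrees with the paper's: both arguments reduce to the biorthogonality relation
$\overline{Y}_{\mathbf{p}}\circ Y_{\mathbf{q}}=\delta_{\mathbf{p},\mathbf{q}}\,\bigl(\md(\gamma_1)\cdots\md(\gamma_{n-1})\bigr)^{-1}\id_{V_{\gamma_{n-1}}}$,
after which the weighted sum formally extracts the diagonal entries of the matrix of $\beta_n$ in the fusion-tree basis, i.e.\ the character. Where you genuinely differ is in how that relation is established. The paper's proof is a one-liner: it invokes the single-vertex bubble pop $\overline{Y}^{\alpha,\beta}_{\gamma'}\circ Y^{\alpha,\beta}_{\gamma}=\delta_{\gamma\gamma'}\,\md(\gamma)^{-1}\id_{V_\gamma}$, a normalization built into the conventions of Equation~\eqref{eq:conventions} taken from \cite{CGP14}, and iterates it up the tree to produce the product of Kronecker deltas along the whole path; the remaining trace bookkeeping is left implicit. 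You instead derive the relation from the Pinch Move (Lemma~\ref{lemma:fusion-identity}) by precomposing the resolution of the identity with $Y_{\mathbf{q}}$, then using Schur's lemma (distinct generic labels give non-isomorphic simples) together with linear independence of the basis $\{Y_{\mathbf{p}}\}$ from Lemma~\ref{lemma:basis} to pin down both the vanishing and the scalar $\md(\mathbf{p})^{-1}$ simultaneously. This buys a derivation that is self-contained relative to the paper's own lemmas, with no separate appeal to the vertex-level normalization, at the cost of being slightly longer; you also write out explicitly the character computation that the paper's proof omits. One small attribution quibble: your aside that the vanishing for $\mathbf{p}\neq\mathbf{q}$ is ``exactly the content'' of Mirror Reflection (Lemma~\ref{lemma:fusion-channel}) is not quite right — that lemma concerns the partially closed diagram with caps, cups, and a coupon $f$, not the open composite $\overline{Y}_{\mathbf{p}}\circ Y_{\mathbf{q}}$ — but this does not affect your argument, since your linear-independence step already yields the vanishing on its own.
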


\begin{proof}
Recall that our conventions aid us the bubble pop move:
\begin{equation*}
\hackcenter{\begin{tikzpicture}[scale=1.1]
\begin{scope}[decoration={markings, mark=at position 0.6 with {\arrow{>}}}]
  \draw[ultra thick, black, postaction=decorate] (0,0) to (.6,-.6);
  \draw[ultra thick, black, postaction=decorate] (.6,-.6) to (.6,-1.2);
  \draw[ultra thick, black, postaction=decorate] (1.2,0) to (.6,-.6);
  \draw[ultra thick, black] (.6,.6) to (0,0);
  \draw[ultra thick, black, postaction=decorate] (.6,1.2) to (.6,.6);
  \draw[ultra thick, black] (.6,.6) to (1.2,0);
\end{scope}
   \node at (0,0.3) {$\alpha$};
   \node at (1.2,0.3) {$\beta$};
   \node at (.6,-1.4) {$\gamma$};
   \node at (.6,1.4) {$\gamma'$};
\end{tikzpicture} }~=~ \frac{\delta_{\gamma\gamma'}}{\md(\gamma)}~\id_{V_\gamma}
\quad
~\Rightarrow~
\quad
\hackcenter{\begin{tikzpicture}[scale=0.8]
\begin{scope}[decoration={markings, mark=at position 0.75 with {\arrow{>}}}]
  \draw[ultra thick, black, postaction=decorate] (0,0) to (0.6,-0.6);
  \draw[ultra thick, black, postaction=decorate] (0.6,-0.6) to (1.2,-1.2);
  \draw[ultra thick, dotted] (1.2,-1.2) to (1.8,-1.8);
  \draw[ultra thick, black, postaction=decorate] (1.8,-1.8) to (2.4,-2.4);
  \draw[ultra thick, black, postaction=decorate] (1.2,0) to (.6,-.6);
  \draw[ultra thick, black, postaction=decorate] (2.4,0) to (1.2,-1.2);
  \draw[ultra thick, black, postaction=decorate] (3.6,0) to (1.8,-1.8);
  \draw[ultra thick, black, postaction=decorate] (4.8,0) to (2.4,-2.4);
  \draw[ultra thick, black, postaction=decorate] (2.4,-2.4) to (2.4,-3.3);
  \draw[ultra thick, black] (0.6,0.6) to (0,0);
  \draw[ultra thick, black, postaction=decorate] (1.2,1.2) to (0.6,0.6);
  \draw[ultra thick, dotted] (1.8,1.8) to (1.2,1.2);
  \draw[ultra thick, black, postaction=decorate] (2.4,2.4) to (1.8,1.8);
  \draw[ultra thick, black] (.6,.6) to (1.2,0);
  \draw[ultra thick, black] (1.2,1.2) to (2.4,0);
  \draw[ultra thick, black] (1.8,1.8) to (3.6,0);
  \draw[ultra thick, black] (2.4,2.4) to (4.8,0);
  \draw[ultra thick, black, postaction=decorate] (2.4,3.3) to (2.4,2.4);
\end{scope}
   \node at (-0.3,0) {$\alpha$};
   \node at (0.8,0) {$\alpha$};
   \node at (2,0) {$\alpha$};
   \node at (2.9,0) {$\cdots$};
   \node at (3.9,0) {$\alpha$};
   \node at (5.1,0) {$\alpha$};
   \node at (.7,-1.25) {$\psi_1$};
   \node at (1.7,-2.4) {$\psi_{n-2}$};
   \node at (2.4,-3.5) {$\psi_{n-1}$};
   \node at (.7,1.25) {$\gamma_1$};
   \node at (1.7,2.4) {$\gamma_{n-2}$};
   \node at (2.4,3.5) {$\gamma_{n-1}$};
\end{tikzpicture} }
~=~
\frac{\delta_{\gamma_1\psi_1} \cdots \delta_{\gamma_{n-1}\psi_{n-1}}}{\md(\gamma_1) \cdots \md(\gamma_{n-1})}~\id_{V_{\gamma_{n-1}}}
\end{equation*}
where $\delta_{\gamma\gamma'}$ is the Kronecker delta.
\end{proof}

\begin{lemma}[Pruning]  \label{lemma:isotoped-bubble}
For any path $\mathbf{p}=(\alpha,\gamma_1, \dots, \gamma_{n-1})\in P_{n,m,\alpha}^r$, we have the following identity:
\begin{equation}
\hackcenter{\begin{tikzpicture}[baseline=0, thick, scale=0.6, shift={(0,0)}]
\begin{scope}[shift={(0,-0.5)}]
\begin{scope}[decoration={markings, mark=at position 0.5 with {\arrow{>}}}]
    \draw[ultra thick, black, postaction=decorate] (0,4.5) to (0.8125,3.75);
    \draw[ultra thick, black, postaction=decorate] (1,4.5) to (0.8125,3.75);
    \draw[ultra thick, black, postaction=decorate] (0.8125,3.75) to (1.625,3);
    \draw[ultra thick, black, postaction=decorate] (2,4.5) to (1.625,3);
    \draw[ultra thick, black, dotted] (1.625, 3) to (2.4375, 2.25);
    \draw[ultra thick, black, postaction=decorate] (3, 4.5) to (2.4375, 2.25);
    \draw[ultra thick, black, postaction=decorate] (2.4375,2.25) to (3.25,1.5);
    \draw[ultra thick, black, postaction=decorate] (4, 4.5) to (3.25,1.5);
    \draw[ultra thick, black, postaction=decorate] (3.25,1.5) to (3.25,-0.5);
\end{scope}
\draw (-0.5,5) node {$\scs \alpha$};
\draw (0.65,5) node {$\scs \alpha$};
\draw (1.65,5) node {$\scs \alpha$};
\draw (2.75,5) node {$\scs \cdots$};
\draw (4.5,4.5) node {$\scs \alpha$};
\draw (1.15,2.85) node {$\scs \gamma_{1}$};
\draw (2.35,1.5) node {$\scs \gamma_{n-2}$};
\draw (4.25,0.9) node {$\scs \gamma_{n-1}$};
\end{scope}
\begin{scope}[shift={(0,0.5)}]
\begin{scope}[decoration={markings, mark=at position 0.9 with {\arrow{>}}}]
    \draw[ultra thick, black, postaction=decorate] (0.8125,-3.75) to (0,-4.5);
    \draw[ultra thick, black, postaction=decorate] (0.8125,-3.75) to (1,-4.5);
    \draw[ultra thick, black, postaction=decorate] (1.625,-3) to (0.8125,-3.75);
    \draw[ultra thick, black, postaction=decorate] (1.625,-3) to (2,-4.5);
    \draw[ultra thick, black, dotted] (2.4375,-2.25) to (1.625,-3);
    \draw[ultra thick, black, postaction=decorate] (2.4375,-2.25) to (3,-4.5);
    \draw[ultra thick, black, postaction=decorate] (3.25,-1.5) to (2.4375,-2.25);
    \draw[ultra thick, black, postaction=decorate] (3.25,-1.5) to (4,-4.5);
\end{scope}
\draw (-0.5,-5) node {$\scs \alpha$};
\draw (0.65,-5) node {$\scs \alpha$};
\draw (1.65,-5) node {$\scs \alpha$};
\draw (2.75,-5) node {$\scs \cdots$};
\draw (4.5,-4.5) node {$\scs \alpha$};
\draw (1.15,-2.85) node {$\scs \gamma_{1}$};
\draw (2.35,-1.5) node {$\scs \gamma_{n-2}$};
\end{scope}
\draw[ultra thick, black] (4,4) to [out=90,in=90] (5,4);
\draw[ultra thick, black] (4,-4) to [out=-90,in=-90] (5,-4);
\draw[ultra thick, black] (5,4) to (5,-4);
\draw[ultra thick, black] (3,4) to [out=90,in=90] (6,4);
\draw[ultra thick, black] (3,-4) to [out=-90,in=-90] (6,-4);
\draw[ultra thick, black] (6,4) to (6,-4);
\draw[ultra thick, black] (2,4) to [out=90,in=90] (7,4);
\draw[ultra thick, black] (2,-4) to [out=-90,in=-90] (7,-4);
\draw[ultra thick, black] (7,4) to (7,-4);
\draw[ultra thick, black] (1,4) to [out=90,in=90] (8,4);
\draw[ultra thick, black] (1,-4) to [out=-90,in=-90] (8,-4);
\draw[ultra thick, black] (8,4) to (8,-4);
\draw[ultra thick, black] (0,4) to (0,5.5);
\draw[ultra thick, black] (0,-4) to (0,-5.5);
\end{tikzpicture} }~=~\frac{1}{\md(\alpha)\md(\gamma_1) \cdots \md(\gamma_{n-2})} 
\hackcenter{\begin{tikzpicture}[baseline=0, thick, scale=0.5, shift={(0,0)}]
\begin{scope}[shift={(1,-5.5)}]
\begin{scope}[decoration={markings, mark=at position 0.6 with {\arrow{>}}}]
    \draw[ultra thick, black, postaction=decorate] (0,6) to (0,-6);
\end{scope}
\draw (0.5,0) node {$\alpha$};
\end{scope}
\end{tikzpicture} } .
\end{equation}
\end{lemma}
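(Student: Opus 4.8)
The plan is to evaluate the closed diagram by induction on $n$, collapsing the nested cap--cup closures one strand at a time, from the innermost (rightmost) strand outward. Each collapse will be a single application of the bubble pop move recorded in the proof of Lemma~\ref{lemma:character-identity}, namely that a bubble astride a through-strand $V_\delta$ evaluates to $\frac{1}{\md(\delta)}\,\id_{V_\delta}$. The base case is immediate (there is nothing to close and the empty product is $1$), so I assume the identity for paths of length $n-1$.

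For the inductive step I would first isotope the innermost closure --- the arc joining the two position-$n$ strands colored $\alpha$ --- inward toward the vertices to which that strand attaches, namely the last splitting vertex $V_{\gamma_{n-1}} \to V_{\gamma_{n-2}}\otimes V_\alpha$ on top and its mirror fusion vertex on the bottom. Because the closure arcs are nested and mutually non-crossing and the position-$n$ strand leaves these vertices to the right, pulling the arc in creates neither a crossing with the through-strands on the left nor a framing twist, and it produces exactly a bubble whose two internal edges carry the waist label $\gamma_{n-1}$ and the label $\alpha$, straddling the through-strand labeled $\gamma_{n-2}$. Genericity of the path labels makes every object simple, so Schur's lemma applies and the bubble pop move collapses this configuration to $\frac{1}{\md(\gamma_{n-2})}\,\id_{V_{\gamma_{n-2}}}$.

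After this collapse the residual picture is, verbatim, the Pruning diagram for the truncated path $(\alpha,\gamma_1,\dots,\gamma_{n-2})$ on $n-1$ strands, now with waist $\gamma_{n-2}$. Applying the induction hypothesis rewrites it as $\frac{1}{\md(\alpha)\md(\gamma_1)\cdots\md(\gamma_{n-3})}$ times the single strand colored $\alpha$, and multiplying by the factor $\frac{1}{\md(\gamma_{n-2})}$ from the first collapse gives the claimed coefficient $\frac{1}{\md(\alpha)\md(\gamma_1)\cdots\md(\gamma_{n-2})}$. Equivalently, closing position $n-k+1$ at the $k$-th step contributes $\frac{1}{\md(\gamma_{n-1-k})}$ (with $\gamma_0:=\alpha$, so that the final closure of position $2$ contributes $\frac{1}{\md(\alpha)}$ with position $1$ as its through-strand), and the product over $k=1,\dots,n-1$ is precisely the stated scalar.

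The computation is mechanical once the bubble pop move is in hand; the point demanding care --- and the main potential pitfall --- is the bookkeeping of which strand plays the role of the through-strand at each stage, since it is the through-strand's modified dimension, and not that of either bubble edge, that enters the product. In particular one must confirm that the waist $\gamma_{n-1}$ appears only as a bubble edge and never as a through-strand, which is why $\md(\gamma_{n-1})$ is correctly absent from the final coefficient; an orientation slip here would contaminate the indices. Verifying that each arc can indeed be isotoped inward as a genuine untwisted bubble, using the nested non-crossing arcs together with the caterpillar shape of the tree, is the remaining routine check.
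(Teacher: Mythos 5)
Your proof is correct and is essentially the paper's own argument: the paper performs one global isotopy that turns all the closure arcs into nested bubbles (with through-strand labels $\gamma_{n-2},\dots,\gamma_1,\alpha$ and bubble edges $\gamma_{n-1},\dots,\gamma_1$ paired with $\alpha$) and then pops them from the inside out via the same bubble pop move, which is exactly what your induction does one arc at a time. Your coefficient bookkeeping, including the correct absence of $\md(\gamma_{n-1})$ from the final scalar, matches the paper's computation.
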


\begin{proof}
The proof is obvious once the diagram is isotoped to
\begin{equation*}
\hackcenter{\begin{tikzpicture}[scale=0.8]
\begin{scope}[decoration={markings, mark=at position 0.6 with {\arrow{>}}}]
  \draw[ultra thick, black, postaction=decorate] (0,0) to (0.6,-0.6);
  \draw[ultra thick, black, postaction=decorate] (0.6,-0.6) to (1.2,-1.2);
  \draw[ultra thick, dotted] (1.2,-1.2) to (1.8,-1.8);
  \draw[ultra thick, black, postaction=decorate] (1.8,-1.8) to (2.4,-2.4);
  \draw[ultra thick, black, postaction=decorate] (.6,-.6) to (1.2,0);
  \draw[ultra thick, black, postaction=decorate] (1.2,-1.2) to (2.4,0);
  \draw[ultra thick, black, postaction=decorate] (1.8,-1.8) to (3.6,0);
  \draw[ultra thick, black, postaction=decorate] (2.4,-2.4) to (4.8,0);
  \draw[ultra thick, black, postaction=decorate] (2.4,-2.4) to (2.4,-3.3);
  \draw[ultra thick, black] (0.6,0.6) to (0,0);
  \draw[ultra thick, black, postaction=decorate] (1.2,1.2) to (0.6,0.6);
  \draw[ultra thick, dotted] (1.8,1.8) to (1.2,1.2);
  \draw[ultra thick, black, postaction=decorate] (2.4,2.4) to (1.8,1.8);
  \draw[ultra thick, black] (.6,.6) to (1.2,0);
  \draw[ultra thick, black] (1.2,1.2) to (2.4,0);
  \draw[ultra thick, black] (1.8,1.8) to (3.6,0);
  \draw[ultra thick, black] (2.4,2.4) to (4.8,0);
  \draw[ultra thick, black, postaction=decorate] (2.4,3.3) to (2.4,2.4);
\end{scope}
   \node at (-0.5,0) {$\gamma_{n-1}$};
   \node at (0.8,0) {$\alpha$};
   \node at (2,0) {$\alpha$};
   \node at (2.9,0) {$\cdots$};
   \node at (3.9,0) {$\alpha$};
   \node at (5.1,0) {$\alpha$};
   \node at (.6,-1.3) {$\gamma_{n-2}$};
   \node at (1.7,-2.4) {$\gamma_1$};
   \node at (2.4,-3.5) {$\alpha$};
   \node at (.6,1.3) {$\gamma_{n-2}$};
   \node at (1.7,2.4) {$\gamma_{1}$};
   \node at (2.4,3.5) {$\alpha$};
\end{tikzpicture} }.
\end{equation*}
\end{proof}

\subsection{Quantum Knot Invariants} We are now ready to study quantum knot invariants using the graphical calculi developed above.

\begin{theorem} \label{thm:CGP-formula}
Let $K$ be a knot represented as a (Markov) closure of an $n$-braid $\beta_n$. Then,
\begin{equation}
N_r^{\alpha}(K)=\sum\limits_{\gamma \in n\alpha + H_{(n-1)(r-1)+1}}\md(\gamma) \chi_{\rho_\gamma}(\beta_n).
\end{equation}
\end{theorem}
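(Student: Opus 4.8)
The plan is to read the colored Alexander invariant of $K=\hat\beta_n$ as a modified trace and then evaluate that trace with the graphical moves established above. Throughout I assume $\alpha$ is generic, meaning $\alpha,2\alpha,\dots,n\alpha\in\C\setminus\Z$; this is exactly the hypothesis under which $V_\alpha^{\otimes n}$ decomposes into generic simple summands $V_\gamma$, each of which is projective, so that $V_\alpha^{\otimes n}$ is projective and carries a modified trace. The first step is to record
\[
N_r^\alpha(K)=\mt_{V_\alpha^{\otimes n}}\big(\rho(\beta_n)\big),
\]
where $\rho(\beta_n)\in\End(V_\alpha^{\otimes n})$ is the braiding operator of $\beta_n$. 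This follows from the $(1,1)$-tangle definition of $N_r^\alpha$ together with the partial-trace property of the modified trace: closing off $n-1$ strands of $\rho(\beta_n)$ produces the cut-open-knot endomorphism $T_K\in\End(V_\alpha)=\C\,\id_{V_\alpha}$, and $\mt_{V_\alpha}(T_K)=\md(\alpha)\brk{T_K}_{V_\alpha}=N_r^\alpha(K)$.

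Next I would insert the Pinch Move (Lemma~\ref{lemma:fusion-identity}) to resolve the identity
\[
\id_{V_\alpha^{\otimes n}}=\sum_{\mathbf{p}}\md(\gamma_1)\cdots\md(\gamma_{n-1})\,(\mathrm{cotree})_{\mathbf{p}}\circ(\mathrm{tree})_{\mathbf{p}}
\]
into the closure. Using cyclicity of the modified trace, each summand rearranges so that $\beta_n$ sits between a splitting cotree below and a fusing tree above, whose common outer strand $V_{\gamma_{n-1}}$ is closed into a loop: this is exactly the configuration on the left of the Braid Pop identity. The Mirror Reflection lemma (Lemma~\ref{lemma:fusion-channel}) legitimizes this rearrangement, forcing the intermediate labels on the two sides of $\beta_n$ to agree so that only matched fusion channels contribute.

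Grouping the sum by the outgoing label $\gamma:=\gamma_{n-1}$ and applying Braid Pop (Lemma~\ref{lemma:character-identity}) collapses the weighted sum over intermediate channels to $\chi_{\rho_\gamma}(\beta_n)\,\id_{V_\gamma}$, where $\rho_\gamma\maps\Br_n\to\End(\mathcal{H}_{n,m,\alpha}^r)$ is the fusion-tree representation attached to $\gamma=n\alpha+(n-1)(r-1)-2m$. Closing the remaining $V_\gamma$-loop contributes $\mt_{V_\gamma}(\id_{V_\gamma})=\md(\gamma)$, which is the graphical content of the Pruning move (Lemma~\ref{lemma:isotoped-bubble}). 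Summing over the admissible labels $\gamma\in n\alpha+H_{(n-1)(r-1)+1}$ then gives $N_r^\alpha(K)=\sum_\gamma\md(\gamma)\chi_{\rho_\gamma}(\beta_n)$.

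The step I expect to be most delicate is the bookkeeping of modified dimensions: the Pinch Move already supplies the factor $\md(\gamma_1)\cdots\md(\gamma_{n-1})$ that feeds \emph{into} Braid Pop, whereas the final closure of the $V_\gamma$-loop supplies the \emph{separate} factor $\md(\gamma)=\md(\gamma_{n-1})$ that survives in the answer, and one must be careful not to conflate these two appearances of $\md(\gamma_{n-1})$. The other point requiring care is the genericity hypothesis: the Pinch Move fails in the singular part of the category (as the counterexample following Lemma~\ref{lemma:fusion-identity} illustrates), so the computation is carried out for generic $\alpha$ and then extended to all $\alpha\in\Cp$ by analyticity of both sides.
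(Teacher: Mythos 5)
Your proof is correct, but it reaches the formula by a genuinely different route than the paper. The paper never takes a trace over $V_\alpha^{\otimes n}$: it works directly with the cut-open $(1,1)$-tangle from Equation~\eqref{eq:definition} and stays entirely inside the graphical calculus, inserting the pinch resolution around the braid box and then using Mirror Reflection (Lemma~\ref{lemma:fusion-channel}) to force the fusion channels above and below $\beta_n$ to agree across the closure arcs, Braid Pop (Lemma~\ref{lemma:character-identity}) to extract $\chi_{\rho_\gamma}(\beta_n)$, and finally Pruning (Lemma~\ref{lemma:isotoped-bubble}) to collapse the leftover nested bubbles on the open $\alpha$-strand into $\frac{1}{\md(\alpha)\md(\gamma_1)\cdots\md(\gamma_{n-2})}\id_{V_\alpha}$, after which the prefactor $\md(\alpha)$ in the definition cancels. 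You instead convert the closure into algebra at the outset, writing $N_r^\alpha(K)=\mt_{V_\alpha^{\otimes n}}(\rho(\beta_n))$ via the partial-trace property, and then cyclicity of the modified trace turns each pinch summand into $\mt_{V_{\gamma_{n-1}}}\bigl(Y^{\mathbf{p}}\rho(\beta_n)Y_{\mathbf{p}}\bigr)$; Braid Pop and $\mt_{V_\gamma}(\id_{V_\gamma})=\md(\gamma)$ finish the computation. What your route buys is brevity: Lemmas~\ref{lemma:fusion-channel} and~\ref{lemma:isotoped-bubble} become unnecessary, and the $\md$-bookkeeping you flag as delicate is handled cleanly (Braid Pop consumes the full product $\md(\gamma_1)\cdots\md(\gamma_{n-1})$, and the surviving $\md(\gamma)$ comes from the trace of the identity). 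What it costs is that you import the cyclicity and partial-trace axioms of the modified trace from~\cite{GPT09, BCGP2}, which the paper deliberately avoids invoking so that the entire argument runs on the four graphical moves it has just developed (and which it reuses later for the special $\alpha$-fusion tree proof of the same theorem).

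Two small inaccuracies in your write-up, neither fatal: since you insert the pinch resolution only once and then apply cyclicity, the tree and cotree flanking $\beta_n$ automatically carry the \emph{same} path label $\mathbf{p}$, so your appeal to Mirror Reflection to ``force the intermediate labels to agree'' is superfluous in your argument (it is needed only in the paper's two-sided graphical setup); and the evaluation of the closed $V_\gamma$-loop as $\md(\gamma)$ is not the content of the Pruning move but simply $\mt_{V_\gamma}(\id_{V_\gamma})=\md(\gamma)$, i.e.\ the definition of the modified dimension --- Pruning is the separate nested-bubble identity that your approach never uses.
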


\begin{proof}
Let $K_\alpha$ be the ribbon graph of the knot $K$ colored by $V_\alpha$. Recall the definition of the renormalized quantum knot invariant~\cite{GPT09}:
\begin{equation} \label{eq:definition}
N_r^\alpha(K) =\mt(K_\alpha)= \md(\alpha)~\left\langle \hackcenter{\begin{tikzpicture}[baseline=0, thick, scale=0.75, shift={(0,0)}]
\draw (0.5,.5) rectangle (4.5,-.5);
\draw (2.5,0) node {$\beta_n$};
\draw (3,1) node {$\cdots$};
\draw (3,-1) node {$\cdots$};
\draw (5.5,0) node {$\cdots$};
\draw (1,3) node[above] {$\alpha$};
\draw (1,-3) node[below] {$\alpha$};
\draw (2,1) node[left] {$\alpha$};
\draw (4,1) node[left] {$\alpha$};
\draw[ultra thick, black] (4,0.5) to (4, 1);
\draw[ultra thick, black] (4,1) to [out=90,in=90] (5,1);
\draw[ultra thick, black] (4,-1) to [out=-90,in=-90] (5,-1);
\draw[ultra thick, black] (2,0.5) to (2, 1.5);
\draw[ultra thick, black] (2,1.5) to [out=90,in=90] (6,1.5);
\draw[ultra thick, black] (2,-1.5) to [out=-90,in=-90] (6,-1.5);
\draw[ultra thick, black] (2,-0.5) to (2, -1.5);
\draw[ultra thick, black] (4,-0.5) to (4, -1);
\begin{scope}[decoration={markings, mark=at position 0.5 with {\arrow{>}}}]
    \draw[ultra thick, black, postaction=decorate] (1,3) to (1, 0.5);
    \draw[ultra thick, black, postaction=decorate] (1,-0.5) to (1, -3);
    \draw[ultra thick, black, postaction=decorate] (5,-1) to (5,1);
    \draw[ultra thick, black, postaction=decorate] (6,-1.5) to (6,1.5);
\end{scope}
\end{tikzpicture} } \right\rangle \in \mathbb{C}.
\end{equation}
Applying Lemma~\ref{lemma:fusion-identity}, Lemma~\ref{lemma:fusion-channel}, Lemma~\ref{lemma:character-identity}, and then Lemma~\ref{lemma:isotoped-bubble} gives us the following identity
\begin{align*}
\hackcenter{\begin{tikzpicture}[baseline=0, thick, scale=0.6, shift={(0,0)}]
\draw (0.5,.5) rectangle (4.5,-.5);
\draw (2.5,0) node {$\beta_n$};
\draw (2.7,1) node {$\cdots$};
\draw (2.7,-1) node {$\cdots$};
\draw (5.5,0) node {$\cdots$};
\draw (1,3) node[above] {$\alpha$};
\draw (1,-3) node[below] {$\alpha$};
\draw (2,1) node[left] {$\alpha$};
\draw (4,1) node[left] {$\alpha$};
\draw[ultra thick, black] (4,0.5) to (4, 1);
\draw[ultra thick, black] (4,1) to [out=90,in=90] (5,1);
\draw[ultra thick, black] (4,-1) to [out=-90,in=-90] (5,-1);
\draw[ultra thick, black] (2,0.5) to (2, 1.5);
\draw[ultra thick, black] (2,1.5) to [out=90,in=90] (6,1.5);
\draw[ultra thick, black] (2,-1.5) to [out=-90,in=-90] (6,-1.5);
\draw[ultra thick, black] (2,-0.5) to (2, -1.5);
\draw[ultra thick, black] (4,-0.5) to (4, -1);
\begin{scope}[decoration={markings, mark=at position 0.5 with {\arrow{>}}}]
    \draw[ultra thick, black, postaction=decorate] (1,3) to (1, 0.5);
    \draw[ultra thick, black, postaction=decorate] (1,-0.5) to (1, -3);
    \draw[ultra thick, black, postaction=decorate] (5,-1) to (5,1);
    \draw[ultra thick, black, postaction=decorate] (6,-1.5) to (6,1.5);
\end{scope}
\end{tikzpicture} } &= \sum\limits_{\gamma_{n-1}\in n\alpha + H_{(n-1)(r-1)+1}}\frac{\md(\gamma_{n-1})}{\md(\alpha)}\chi_{\rho_{\gamma_{n-1}}}(\beta_n) \id_{V_\alpha}.
\end{align*}
\end{proof}
Although the proof relied on a specific choice of morphism conventions, the resulting formula is now independent of this choice because the traces of similar matrices are equal.

Next, we discuss the \emph{Akutsu-Deguchi-Ohtsuki (ADO) invariants}~\cite{ADO92}. The ADO invariants are the most studied non-semisimple quantum link invariants and they play a prominent role in quantum topology. For example, the Kashaev invariant, which is the special case of the ADO invariant, is the key ingredient for the statement of the Volume conjecture~\cite{CGP14}.

\begin{remark}
We clarify the terminology used in this paper. The original definition of the ADO invariant utilizes a state sum formulation~\cite{ADO92}. Murakami then reconstructed a framed version of the ADO invariant using the universal $R$-matrix from quantum groups~\cite{Mur08}. He suggested this invariant to be named the \emph{colored Alexander invariant}. At a similar timeframe, Geer and Patureau-Mirand introduced the notion of the modified dimensions under the pseudonym ``fake quantum dimensions'' in~\cite{GP08b} which were later used to introduce the renormalized quantum link invariant with Turaev in~\cite{GPT09}. They showed that their renormalized invariant $N_r^\alpha(K)$ recover the colored Alexander polynomial. In the literature, the ADO invariant and the colored Alexander invariant are often synonymous. Here, we specifically refer the ADO invariant as the unframed version of the colored Alexander polynomial without the modified dimension factor $\md(\alpha)$. That is, for a knot $K$ represented as a closure of an $n$-braid $\beta_n$,
\begin{equation}
    q^{-\frac{\alpha^2 - (r-1)^2}{2} writhe(\beta_n)}N_r^{\alpha}(K)=\md(\alpha)\operatorname{ADO}_r(K)|_{x=q^{2\alpha - 2}}
\end{equation}
where the coefficient $q^{-\frac{\alpha^2 - (r-1)^2}{2} writhe(\beta_n)}$ is the framing correction (see~\cite[Section 3.2]{Mur08}). 
\end{remark}

This brings us to the following Corollary.
\begin{corollary}
Let $K$ be a knot represented as a closure of an $n$-braid $\beta_n$. Then, 
\begin{equation}
\operatorname{ADO}_r(K)|_{x=q^{2\alpha - 2}}=\frac{q^{-\frac{\alpha^2 - (r-1)^2}{2} writhe(\beta_n)}}{\md(\alpha)}\sum\limits_{\gamma \in n\alpha + H_{(n-1)(r-1)+1}}\md(\gamma) \chi_{\rho_\gamma}(\beta_n).    
\end{equation}
In particular, when $r=2$, we have 
\begin{equation}
\Delta_{K}(q^{2\alpha - 2}) = \frac{q^{-\frac{\alpha^2 - 1}{2} writhe(\beta_n)}}{\md(\alpha)}\sum\limits_{\gamma \in n\alpha + H_{n}} \md(\gamma)\chi_{\rho_\gamma}(\beta_n)
\end{equation} where $\Delta_K(x)=\operatorname{ADO}_2(K)$ is the Alexander polynomial of $K$. 
\end{corollary}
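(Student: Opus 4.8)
The plan is to combine Theorem~\ref{thm:CGP-formula} with the normalization relating $N_r^\alpha(K)$ to the ADO invariant recorded in the preceding Remark. Since both ingredients are already established, the argument is a direct algebraic substitution rather than a new geometric computation.

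First I would recall the identity from the Remark,
\[
q^{-\frac{\alpha^2-(r-1)^2}{2}writhe(\beta_n)}N_r^\alpha(K)=\md(\alpha)\operatorname{ADO}_r(K)|_{x=q^{2\alpha-2}},
\]
and solve it for $\operatorname{ADO}_r(K)|_{x=q^{2\alpha-2}}$ by dividing through by the nonzero scalar $\md(\alpha)$. This isolates the ADO invariant as
\[
\operatorname{ADO}_r(K)|_{x=q^{2\alpha-2}}=\frac{q^{-\frac{\alpha^2-(r-1)^2}{2}writhe(\beta_n)}}{\md(\alpha)}N_r^\alpha(K).
\]

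Next I would substitute the character formula for $N_r^\alpha(K)$ supplied by Theorem~\ref{thm:CGP-formula}, namely $\sum_{\gamma\in n\alpha+H_{(n-1)(r-1)+1}}\md(\gamma)\chi_{\rho_\gamma}(\beta_n)$, directly into the right-hand side. This reproduces the first displayed equation of the Corollary verbatim.

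For the specialization $r=2$, I would verify three elementary facts: the framing exponent becomes $-\frac{\alpha^2-1}{2}writhe(\beta_n)$ since $(r-1)^2=1$; the index set $H_{(n-1)(r-1)+1}$ collapses to $H_n$ because $(n-1)(r-1)+1=n$; and the ADO invariant at $r=2$ is the Alexander polynomial, so that $\Delta_K(q^{2\alpha-2})=\operatorname{ADO}_2(K)|_{x=q^{2\alpha-2}}$. Substituting these into the general formula yields the second displayed equation. I do not expect a substantial obstacle here; the only point requiring care is correctly tracking the framing correction and confirming that the specialization $x=q^{2\alpha-2}$ is compatible with the conventions fixed in Definition~\ref{def:l2r} and in the Remark.
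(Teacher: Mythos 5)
Your proposal is correct and matches the paper's own treatment: the paper presents this Corollary as an immediate consequence of Theorem~\ref{thm:CGP-formula} and the framing identity in the preceding Remark, exactly the substitution you perform. Your verification of the $r=2$ specialization (the exponent $(r-1)^2=1$, the collapse $H_{(n-1)(r-1)+1}=H_n$, and $\Delta_K=\operatorname{ADO}_2(K)$) is precisely what is needed and introduces no gap.
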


\begin{theorem}[Ito's formula~\cite{Ito16}]\label{thm:Ito}
Let $K$ be a knot represented as a closure of an $n$-braid $\beta_n$. Then, the colored Alexander invariant formula is given by
\begin{equation}
N_r^{\alpha}(K)= q^{\frac{(\alpha+r-1)^2}{2}writhe(\beta_n)}\sum\limits_{m=0}^{(n-1)(r-1)}\md(n\alpha+(n-1)(r-1)-2m) \chi_{\mathcal{L}_{n,m,\alpha}^r}(\beta_n) .
\end{equation}
In particular,
\begin{equation}
\Delta_K(q^{2\alpha-2})=\frac{q^{(\alpha+1) writhe(\beta_n)}}{\md(\alpha)}\sum\limits_{m=0}^{n-1}\md(n\alpha+n-1-2m) \chi_{\mathcal{L}_{n,m,\alpha}^2}(\beta_n) 
\end{equation}
where $\Delta_K(x)\in \mathbb{Z}[x,x^{-1}]$ is the Alexander polynomial of $K$. 
\end{theorem}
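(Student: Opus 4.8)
The plan is to derive Ito's formula directly from Theorem~\ref{thm:CGP-formula} by converting each quantum character $\chi_{\rho_\gamma}$ into the homological character $\chi_{\mathcal{L}_{n,m,\alpha}^r}$ via Theorem~\ref{thm:fusiontree-homological}, while carefully tracking the projective scalar. First I would reindex the sum in Theorem~\ref{thm:CGP-formula}: the set $n\alpha + H_{(n-1)(r-1)+1}$ consists exactly of the weights $\gamma_{n-1} = n\alpha + (n-1)(r-1) - 2m$ for $m = 0,1,\dots,(n-1)(r-1)$, and for each such $\gamma_{n-1}$ the representation $\rho_{\gamma_{n-1}}$ is the braid action on $\mathcal{H}_{n,m,\alpha}^r$. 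This rewrites the formula as
\begin{equation*}
N_r^\alpha(K) = \sum_{m=0}^{(n-1)(r-1)} \md\!\left(n\alpha + (n-1)(r-1) - 2m\right)\chi_{\rho_{\gamma_{n-1}}}(\beta_n).
\end{equation*}

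Next I would feed in the projective isomorphism $\rho_{\gamma_{n-1}} \cong \mathcal{L}_{n,m,\alpha}^r$ from Theorem~\ref{thm:fusiontree-homological}. A projective isomorphism of braid group representations produces a scalar $c_m(\beta)\in\mathbb{C}^\times$ with $\rho_{\gamma_{n-1}}(\beta) = c_m(\beta)\,M_m\,\mathcal{L}_{n,m,\alpha}^r(\beta)\,M_m^{-1}$ for a fixed intertwiner $M_m$; evaluating on a product $\beta\beta'$ shows $c_m\colon\Br_n\to\mathbb{C}^\times$ is a group homomorphism. Since the abelianization of $\Br_n$ is $\mathbb{Z}$ with all Artin generators identified, $c_m$ is determined by $\mu_m := c_m(\sigma_1)$ and satisfies $c_m(\beta_n) = \mu_m^{\,e(\beta_n)}$, where $e(\beta_n)$ is the exponent sum. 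As every crossing of a braid word contributes its sign and the Markov closure introduces none, $e(\beta_n) = writhe(\beta_n)$. Taking traces yields $\chi_{\rho_{\gamma_{n-1}}}(\beta_n) = \mu_m^{\,writhe(\beta_n)}\,\chi_{\mathcal{L}_{n,m,\alpha}^r}(\beta_n)$.

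The crux is to show $\mu_m = q^{(\alpha+r-1)^2/2}$ for every $m$. I would pin down the value by evaluating the quantum braiding on the highest weight vector $v_0^{\otimes n}$ of weight $n\lambda$, with $\lambda := \alpha + r - 1$: only the leading $E^0\otimes F^0$ term of the $R$-matrix \eqref{eq:R} survives since $Ev_0 = 0$, giving $R(v_0\otimes v_0) = q^{\lambda^2/2}v_0\otimes v_0$, so each adjacent braiding acts on $v_0^{\otimes n}$ by $q^{\lambda^2/2}$; as $v_0^{\otimes n}$ spans the one-dimensional space $\mathcal{H}_{n,0,\alpha}^r$, on which $\mathcal{L}_{n,0,\alpha}^r$ is trivial (the configuration space of zero points is a point), this gives $\mu_0 = q^{\lambda^2/2} = q^{(\alpha+r-1)^2/2}$. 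The genuinely delicate point is that $\mu_m$ is \emph{independent of $m$}: this is precisely the ``projective'' discrepancy flagged in the proof of Theorem~\ref{thm:fusiontree-homological}, where the homological model applies the same $q^{-H\otimes H/2}$ normalization of the $R$-matrix uniformly across all weight spaces, so that under the identification each crossing is rescaled by the single module-level constant $q^{\lambda^2/2}$ regardless of $m$. I expect verifying this $m$-independence, rather than the value itself, to be the main obstacle; one can corroborate it by computing the full braiding eigenvalue on each isotypic summand $V_\nu\subset V_\alpha^{\otimes 2}$ through the twist $\theta_{V_\nu}$ and checking the ratios against the specialized Lawrence matrices at $\mathtt q = -q^2$, $\mathtt x = q^{-2\lambda}$ (the sign matching being supplied by the $-q^2$ specialization). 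Assembling these steps produces the main formula.

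Finally I would obtain the ``in particular'' statement by setting $r=2$, where $H_{(n-1)(r-1)+1} = H_n$ and the sum runs over $m = 0,\dots,n-1$. Using $\operatorname{ADO}_2(K) = \Delta_K$ together with the framing-correction relation $q^{-\frac{\alpha^2-(r-1)^2}{2}writhe(\beta_n)}N_r^\alpha(K) = \md(\alpha)\operatorname{ADO}_r(K)|_{x=q^{2\alpha-2}}$ from the preceding Remark, the prefactor collapses via $\tfrac{(\alpha+1)^2}{2} - \tfrac{\alpha^2-1}{2} = \alpha+1$, giving
\begin{equation*}
\Delta_K(q^{2\alpha-2}) = \frac{q^{(\alpha+1)writhe(\beta_n)}}{\md(\alpha)}\sum_{m=0}^{n-1}\md(n\alpha+n-1-2m)\,\chi_{\mathcal{L}_{n,m,\alpha}^2}(\beta_n),
\end{equation*}
as claimed.
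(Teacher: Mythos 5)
Your proposal is correct and follows essentially the same route as the paper: the paper's proof is precisely the combination of Theorem~\ref{thm:CGP-formula} with the character identity $q^{-\frac{(\alpha+r-1)^2}{2}writhe(\beta_n)}\chi_{\rho_\gamma}(\beta_n) = \chi_{\mathcal{L}_{n,m,\alpha}^r}(\beta_n)$ obtained from Theorem~\ref{thm:fusiontree-homological}, which is exactly your reindexing-plus-scalar-tracking argument. The only difference is that the projective scalar you labor to pin down (including its $m$-independence, which you rightly flag as the delicate point) is not re-derived in the paper but inherited from the results cited in the proof of Theorem~\ref{thm:fusiontree-homological} (Ito's Theorem~4.2, Martel's Corollary~7.1), where the discrepancy per positive crossing is shown to be the uniform factor $q^{(\alpha+r-1)^2/2}$ that must be factored out of the $R$-matrix so that the remaining matrix entries are polynomial in the homological variables $\mathtt{x},\mathtt{q}$.
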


\begin{proof}
Ito's colored Alexander invariant formula~\cite{Ito16} from homological representations follows immediately from combining our renormalized quantum knot invariant formula from $\alpha$-fusion trees (Theorem~\ref{thm:CGP-formula}) and the following identity
\begin{equation*}
q^{-\frac{(\alpha+r-1)^2}{2}writhe(\beta_n)} \chi_{\rho_{\gamma}}(\beta_n) = \chi_{\mathcal{L}_{n,m,\alpha}^r}(\beta_n)
\end{equation*}
obtained from the identification theorem between the space of $\alpha$-fusion trees and the $q$-specialized Lawrence representations (Theorem~\ref{thm:fusiontree-homological}).
\end{proof}

\section{Fusion Trees in the Hermitian Category}
We now study fusion trees in $\Hom$-spaces equipped with a non-degenerate Hermitian pairing. First, consider the subcategory $\mathcal{D}$ of $\mathcal{C}$ generated by the following set:
\begin{equation*}
\mathrm{A}=\left\{V_\alpha, S_n, P_i, \mathbb{C}_{a r}^H \mid \alpha \in(\mathbb{R} \backslash \mathbb{Z}) \cup r \mathbb{Z}, n, i \in\{0, \cdots, r-2\}, a \in \mathbb{Z}\right\}.
\end{equation*}
Let $\mathcal{C}^{\dagger}$ be the full subcategory of $\mathcal{C}$ whose objects are \emph{Hermitian $\overline{U}_{q}^H\mathfrak{sl}(2)$-modules} in the language of~\cite{GLPMS}. Then, let $\mathcal{D}^\dagger$ be the full subcategory of $\mathcal{C}^\dagger$ whose objects are in $\mathcal{D}$. It was shown in~\cite[Theorem 4.18]{GLPMS} that $\mathcal{D}^\dagger$ is a \emph{Hermitian ribbon category} in the sense of~\cite[Definition 3.1]{GLPMS}.

\subsection{Hermitian Data}
We restrict our attention to ribbon graphs colored by simple objects from the generic part of the category $\mathcal{D}^\dagger$. The following Lemma tells us that the $\Hom$-spaces we consider in $\mathcal{D}^\dagger$ are equipped with a non-degenerate Hermitian pairing which will be utilized for the rest of the paper.

\begin{lemma}{\cite[Lemma 4.20]{GLPMS}} \label{def:hermitian-pairing}
For any objects $V,W$ of $\mathcal{D}^\dagger$ with $V$ projective, the pairing
\begin{equation*} \langle f , g \rangle \mapsto \mt_V(f^\dagger g): \Hom_{\mathcal{D}^\dagger}(V,W) \times \Hom_{\mathcal{D}^\dagger}(V,W) \rightarrow \mathbb{C}\end{equation*}
is a non-degenerate Hermitian pairing.
\end{lemma}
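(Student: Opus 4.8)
The plan is to check the three defining properties of a non-degenerate Hermitian pairing in turn: sesquilinearity, conjugate symmetry, and non-degeneracy. The first two are formal consequences of the axioms of a Hermitian ribbon category, whereas non-degeneracy carries the actual content and will be reduced to the non-degeneracy of the modified trace on the ideal of projective objects.

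I would begin by recording the structure available. Because $\mathcal{D}^\dagger$ is a Hermitian ribbon category in the sense of~\cite[Definition 3.1]{GLPMS}, the dagger is an anti-linear, contravariant, involutive functor; thus for $f,g \in \Hom_{\mathcal{D}^\dagger}(V,W)$ we have $f^\dagger \in \Hom_{\mathcal{D}^\dagger}(W,V)$, so $f^\dagger g \in \End_{\mathcal{D}^\dagger}(V)$, and since $V$ is projective the scalar $\mt_V(f^\dagger g)$ is defined. Sesquilinearity is then immediate: $(\lambda f)^\dagger = \bar\lambda f^\dagger$ gives anti-linearity in the first argument, while $\mathbb{C}$-linearity of composition and of $\mt_V$ gives linearity in the second. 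For conjugate symmetry I would invoke the compatibility of the modified trace with the dagger, $\overline{\mt_V(h)} = \mt_V(h^\dagger)$ for $h \in \End_{\mathcal{D}^\dagger}(V)$ (the Hermitian refinement of cyclicity of $\mt$), together with the contravariance and involutivity of the dagger, which give $(f^\dagger g)^\dagger = g^\dagger f$. Combining these,
\[
\overline{\langle f, g\rangle} = \overline{\mt_V(f^\dagger g)} = \mt_V\!\big((f^\dagger g)^\dagger\big) = \mt_V(g^\dagger f) = \langle g, f\rangle .
\]

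The heart of the argument is non-degeneracy, which I would deduce from the fundamental non-degeneracy property of the modified trace. For $V$ projective, the composition pairing
\[
\Hom_{\mathcal{D}^\dagger}(W,V) \times \Hom_{\mathcal{D}^\dagger}(V,W) \longrightarrow \mathbb{C}, \qquad (a,b) \longmapsto \mt_V(a b),
\]
is non-degenerate; equivalently, $\mt_V$ induces an isomorphism $\Hom_{\mathcal{D}^\dagger}(V,W) \cong \Hom_{\mathcal{D}^\dagger}(W,V)^\ast$. Granting this, suppose $f \in \Hom_{\mathcal{D}^\dagger}(V,W)$ satisfies $\langle f, g\rangle = 0$ for all $g$. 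Writing $a = f^\dagger$, this says $\mt_V(a g) = 0$ for every $g$, so non-degeneracy of the composition pairing forces $f^\dagger = 0$, and involutivity of the dagger yields $f = 0$; conjugate symmetry then handles the other argument. I expect the main obstacle to be precisely this non-degeneracy of the modified-trace composition pairing: it is the deep input, resting on the trace theory for the ideal of projectives in $\mathcal{C}$, while the remaining steps are formal manipulations with the dagger. I would therefore cite this property of the modified trace (as established for the unrolled quantum $\slt$ category) rather than reprove it here.
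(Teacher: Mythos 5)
Your proposal is correct, but note that this paper does not prove the lemma at all: it is imported verbatim from \cite[Lemma 4.20]{GLPMS}, so there is no internal proof to compare against. Your argument is essentially the standard one (and the one given in \cite{GLPMS}): sesquilinearity and conjugate symmetry follow formally from the anti-linear involutive dagger together with the compatibility $\overline{\mt_V(h)}=\mt_V(h^\dagger)$, while non-degeneracy reduces to the non-degeneracy of the composition pairing $(a,b)\mapsto \mt_V(ab)$ for the modified trace on the ideal of projectives; both of these inputs are themselves nontrivial results that you correctly flag as needing citation rather than reproof.
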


A key ingredient in computing $(Y^{\alpha,\beta}_\gamma)^\dagger$ is the map $\mathrm{X}_{V_\alpha,V_\beta}$ defined in~\cite{GLPMS}. With respect to our conventions, we outline some useful properties.
\begin{proposition} \label{prop:X-map}
We have the following identity:
\begin{equation*}
\mathrm{X}_{V_{\alpha},V_{\beta}}Y_{\gamma}^{\alpha,\beta} = Y_{\gamma}^{\beta,\alpha}.
\end{equation*}
\end{proposition}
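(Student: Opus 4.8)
The plan is to first reduce the asserted equality to a single scalar and then pin that scalar to $1$ by evaluating on a highest weight vector. Recall from~\cite{GLPMS} that $\mathrm{X}_{V_\alpha,V_\beta}\colon V_\alpha\otimes V_\beta\to V_\beta\otimes V_\alpha$ is the braiding-type isomorphism assembled from the $R$-matrix and ribbon data of $\mathcal{D}^\dagger$; in particular it is a morphism of the category, hence an intertwiner of $\overline{U}_q^H\mathfrak{sl}(2)$-modules. Consequently $\mathrm{X}_{V_\alpha,V_\beta}\circ Y_\gamma^{\alpha,\beta}$ is again an intertwiner $V_\gamma\to V_\beta\otimes V_\alpha$. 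For generic $\alpha,\beta$ with $\alpha+\beta$ generic, the simple module $V_\gamma$ occurs with multiplicity one in each of $V_\alpha\otimes V_\beta$ and $V_\beta\otimes V_\alpha$, so $\Hom(V_\gamma,V_\alpha\otimes V_\beta)$ and $\Hom(V_\gamma,V_\beta\otimes V_\alpha)$ are one-dimensional, spanned respectively by $Y_\gamma^{\alpha,\beta}$ and $Y_\gamma^{\beta,\alpha}$. Hence there is a unique $\lambda\in\mathbb{C}$ with $\mathrm{X}_{V_\alpha,V_\beta}Y_\gamma^{\alpha,\beta}=\lambda\,Y_\gamma^{\beta,\alpha}$, and the entire content of the proposition is the claim that $\lambda=1$.

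To extract $\lambda$ I would evaluate both sides on the highest weight vector $v_0\in V_\gamma$. Writing $\gamma=\alpha+\beta+(r-1)-2m$, the vector $w:=Y_\gamma^{\alpha,\beta}(v_0)=\sum_{i+j=m}c_{ij}\,v_i^\alpha\otimes v_j^\beta$ is the highest weight vector of the $V_\gamma$-isotypic component, with coupling coefficients $c_{ij}$ fixed by the convention~\eqref{eq:conventions} of~\cite{CGP14,CM}; likewise $Y_\gamma^{\beta,\alpha}(v_0)=\sum_{k+l=m}c'_{kl}\,v_k^\beta\otimes v_l^\alpha$. Applying $\mathrm{X}_{V_\alpha,V_\beta}=\tau\circ R$ with the explicit truncated $R$-matrix~\eqref{eq:R} and the weight action~\eqref{E:BasisV}, the summand $E^n\otimes F^n$ carries $v_i^\alpha\otimes v_j^\beta$ to a scalar multiple of $v_{j+n}^\beta\otimes v_{i-n}^\alpha$ after the flip $\tau$. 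The decisive simplification is to read off the coefficient of the single basis vector $v_0^\beta\otimes v_m^\alpha$: this forces $j+n=0$, hence $j=n=0$, and then $i-n=m$, hence $i=m$, so \emph{only} the term $i=m,\,j=0,\,n=0$ survives. Comparing coefficients therefore yields a closed relation $\lambda\,c'_{0,m}=c_{m,0}$ up to the leading scalar of~\eqref{eq:R} (which is $1$) together with whatever twist factor enters the definition of $\mathrm{X}_{V_\alpha,V_\beta}$.

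The main obstacle is thus a pure normalization check: one must confirm that the explicit coupling constants $c_{m,0}$ and $c'_{0,m}$ determined by~\eqref{eq:conventions}, multiplied by the ribbon scalar built into $\mathrm{X}_{V_\alpha,V_\beta}$, combine to give exactly $\lambda=1$ rather than a nontrivial constant. Genericity of $\alpha,\beta,\alpha+\beta$ guarantees that $c_{m,0}$ and $c'_{0,m}$ are nonzero, so the ratio is well-defined, and it is the symmetry of the chosen coupling conventions under interchanging the two tensor factors that ultimately produces the value $1$. I expect this bookkeeping of the twist and $R$-matrix scalars against the coupling normalization to be the only delicate point; the structural part of the argument is immediate from one-dimensionality of the relevant $\Hom$-spaces, and the extremal projection onto $v_0^\beta\otimes v_m^\alpha$ is what makes the scalar computable in one line.
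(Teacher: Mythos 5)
Your structural reduction is sound, and it is genuinely different in flavor from the paper's argument: $\mathrm{X}_{V_\alpha,V_\beta}$ is a composite of module maps (the braiding and the square roots of the twists), so $\mathrm{X}_{V_\alpha,V_\beta}Y_\gamma^{\alpha,\beta}\in\Hom(V_\gamma,V_\beta\otimes V_\alpha)$, which is one-dimensional for generic parameters, whence $\mathrm{X}_{V_\alpha,V_\beta}Y_\gamma^{\alpha,\beta}=\lambda\,Y_\gamma^{\beta,\alpha}$ for a unique scalar $\lambda$; and your extremal-coefficient observation is correct, since reading off the coefficient of $v_0^\beta\otimes v_m^\alpha$ kills every term of the truncated $R$-matrix \eqref{eq:R} except $n=0$, $i=m$, $j=0$. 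However, the proof stops exactly where the content of the proposition begins. With $\gamma=\alpha+\beta+r-1-2m$, and using the fact (from \cite[Table (28)]{GLPMS}, recalled in the paper's proof) that $\sqrt{\theta}$ acts on a generic simple $V_\mu$ by $q^{(\mu^2-(r-1)^2)/4}$, your comparison of coefficients reads
\begin{equation*}
\lambda\, A^{\beta,\alpha,\gamma}_{0,m,0} \;=\; q^{\frac{\alpha^2+\beta^2-\gamma^2-(r-1)^2}{4}}\, q^{\frac{(\alpha+r-1-2m)(\beta+r-1)}{2}}\, A^{\alpha,\beta,\gamma}_{m,0,0},
\end{equation*}
so $\lambda=1$ is equivalent to a specific $q$-power identity between the Clebsch--Gordan coefficients of \cite{CM,CGP14}. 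You assert this identity holds ``by the symmetry of the chosen coupling conventions under interchanging the two tensor factors,'' but no such symmetry is available for free: the convention \eqref{eq:conventions} fixes $Y_\gamma^{\alpha,\beta}$ and $Y_\gamma^{\beta,\alpha}$ by two \emph{independent} normalizations, and rescaling either one rescales $\lambda$. The value $\lambda=1$ is therefore a convention-dependent fact that must be computed or cited, not inferred from symmetry; it is precisely the claim being proved. Relatedly, your mid-proof identification $\mathrm{X}_{V_\alpha,V_\beta}=\tau\circ R$ drops the $\sqrt{\theta}$ factors from the definition $\mathrm{X}_{V_\alpha,V_\beta}=\bigl(\sqrt{\theta}_{V_\beta\otimes V_\alpha}\bigr)^{-1}c_{V_\alpha,V_\beta}\bigl(\sqrt{\theta}_{V_\alpha}\otimes\sqrt{\theta}_{V_\beta}\bigr)$; these contribute the factor $q^{\frac{\alpha^2+\beta^2-\gamma^2-(r-1)^2}{4}}$ above and cannot be deferred as bookkeeping, since it is exactly against them and the $q^{H\otimes H/2}$ eigenvalue that the CGQC ratio must cancel.

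For comparison, the paper closes exactly this gap without ever touching the CGQCs: it expands $\bigl(\sqrt{\theta}_{V_\beta\otimes V_\alpha}\bigr)^{-1}$ through the semisimple decomposition as $\sum_{\gamma_k} q^{-\frac{\gamma_k^2-(r-1)^2}{4}}\md(\gamma_k)\,Y_{\gamma_k}^{\beta,\alpha}Y^{\gamma_k}_{\beta,\alpha}$, applies \cite[Equation (Ng)]{CGP14} --- which is precisely the packaged, correctly normalized statement of how the braiding acts on a trivalent vertex in these conventions --- and finishes with a bubble pop, after which all $q$-powers cancel identically. To repair your argument you would either have to carry out the explicit CGQC computation from \cite{CM}, or invoke \cite[Equation (Ng)]{CGP14}; in the latter case the highest-weight-vector calculation becomes redundant and your proof collapses into the paper's.
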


\begin{proof}
Recall that
\begin{equation}
\mathrm{X}_{V_{\alpha},V_{\beta}}=\left( \sqrt{\theta}_{V_\beta, V_\alpha} \right)^{-1} c_{V_\alpha, V_\beta} \left( \sqrt{\theta}_{V_\alpha} \otimes \sqrt{\theta}_{V_\beta} \right).
\end{equation}
The map $\sqrt{\theta}_{V_\alpha}$ can be extracted from~\cite[Table (28)]{GLPMS}. Thus, the computation of $\mathrm{X}_{V_{\alpha},V_{\beta}}$ amounts to solving for $\sqrt{\theta}_{V_\beta, V_\alpha}$. For generic values $\alpha$ and $\beta$, we have the equation
\begin{equation}
\left(\sqrt{\theta}_{V_\beta \otimes V_\alpha}\right)^{\pm 1} = \sum\limits_{\gamma \in \alpha + \beta + H_r} q^{\pm \frac{\gamma^2 - (r-1)^2}{4}} \md(\gamma) \hackcenter{\begin{tikzpicture}[scale=1.1]
\begin{scope}[decoration={markings, mark=at position 0.7 with {\arrow{>}}}]
  \draw[ultra thick, black, postaction=decorate] (0,.9) to (.6,.3);
  \draw[ultra thick, black, postaction=decorate] (.6,.3) to (.6,-.3);
  \draw[ultra thick, black, postaction=decorate] (1.2,.9) to (.6,0.3);
  \draw[ultra thick, black, postaction=decorate] (.6,-.3) to (0,-0.9);
  \draw[ultra thick, black, postaction=decorate] (.6,-.3) to (1.2,-0.9);
\end{scope}
   \node at (0,1.1) {$\beta$};
   \node at (1.2,1.1) {$\alpha$};
   \node at (0.9,0) {$\gamma$};
   \node at (0,-1.1) {$\beta$};
   \node at (1.2,-1.1) {$\alpha$};
\end{tikzpicture} }
\end{equation}
which can be deduced from the following commutative diagram
\begin{equation*}
\begin{tikzcd}
V_\beta \otimes V_\alpha \arrow[rrr, "\sqrt{\theta}_{V_{\beta}\otimes V_{\alpha}}"] \arrow[d, "U^{-1}", swap] & & & V_\beta \otimes V_\alpha \\
\bigoplus\limits_{\gamma \in \alpha + \beta + H_r} V_{\gamma} \arrow[rrr, "\bigoplus\limits_{\gamma \in \alpha + \beta + H_r} \sqrt{\theta}_{V_{\gamma}}", swap] & & & \bigoplus\limits_{\gamma \in \alpha + \beta + H_r} V_{\gamma} \arrow[u, "U", swap]
\end{tikzcd}
\end{equation*}
with the vertical isomorphism $U$ chosen as
\begin{equation*}
U=\begin{bmatrix}
\hackcenter{\begin{tikzpicture}[scale=1.1]
\begin{scope}[decoration={markings, mark=at position 0.5 with {\arrow{>}}}]
  \draw[ultra thick, black, postaction=decorate] (0,0) to (.6,-.6);
  \draw[ultra thick, black, postaction=decorate] (.6,-.6) to (.6,-1.2);
  \draw[ultra thick, black, postaction=decorate] (1.2,0) to (.6,-.6);
\end{scope}
   \node at (0,0.2) {$\beta$};
   \node at (1.2,0.2) {$\alpha$};
   \node at (.6,-1.4) {$\gamma_0$};
\end{tikzpicture} } & \cdots & \hackcenter{\begin{tikzpicture}[ scale=1.1]
\begin{scope}[decoration={markings, mark=at position 0.5 with {\arrow{>}}}]
  \draw[ultra thick, black, postaction=decorate] (0,0) to (.6,-.6);
  \draw[ultra thick, black, postaction=decorate] (.6,-.6) to (.6,-1.2);
  \draw[ultra thick, black, postaction=decorate] (1.2,0) to (.6,-.6);
\end{scope}
   \node at (0,0.2) {$\beta$};
   \node at (1.2,0.2) {$\alpha$};
   \node at (.6,-1.4) {$\gamma_{r-1}$};
\end{tikzpicture} }
\end{bmatrix} \quad \iff \quad U^{-1} = \begin{bmatrix}
\md(\gamma_0) \hackcenter{\begin{tikzpicture}[scale=1.1]
\begin{scope}[decoration={markings, mark=at position 0.75 with {\arrow{>}}}]
  \draw[ultra thick, black, postaction=decorate] (0.6,0) to (.6,-.6);
  \draw[ultra thick, black, postaction=decorate] (.6,-.6) to (0,-1.2);
  \draw[ultra thick, black, postaction=decorate] (.6,-.6) to (1.2,-1.2);
\end{scope}
   \node at (0.6,0.2) {$\gamma_0$};
   \node at (1.2,-1.4) {$\alpha$};
   \node at (0,-1.4) {$\beta$};
\end{tikzpicture} }\\
\\
\vdots \\
\\
\md(\gamma_{r-1})\hackcenter{\begin{tikzpicture}[scale=1.1]
\begin{scope}[decoration={markings, mark=at position 0.75 with {\arrow{>}}}]
  \draw[ultra thick, black, postaction=decorate] (0.6,0) to (.6,-.6);
  \draw[ultra thick, black, postaction=decorate] (.6,-.6) to (0,-1.2);
  \draw[ultra thick, black, postaction=decorate] (.6,-.6) to (1.2,-1.2);
\end{scope}
   \node at (0.6,0.2) {$\gamma_{r-1}$};
   \node at (1.2,-1.4) {$\alpha$};
   \node at (0,-1.4) {$\beta$};
\end{tikzpicture} }
\end{bmatrix}
\end{equation*}
where $\gamma_{k} = \beta + \alpha + r-1 - 2k \in \beta + \alpha+H_r$. Then, applying~\cite[Equation (Ng)]{CGP14} and a bubble pop gives our desired solution:
\begin{equation}
\begin{split}
\mathrm{X}_{V_\alpha,V_\beta}Y_{\gamma}^{\alpha,\beta} &= \sum\limits_{\gamma_k \in \alpha + \beta + H_r} q^{\frac{\alpha^2 + \beta^2-\gamma_k^2 - (r-1)^2}{4}} \md(\gamma)
\hackcenter{\begin{tikzpicture}[scale=0.8]
\begin{scope}[decoration={markings, mark=at position 0.7 with {\arrow{>}}}]
  \draw[ultra thick, black] (0,-0.9) to (1.2,-2.1);
  \draw[line width=10pt, white] (1.2,-0.9) to (0,-2.1);
  \draw[ultra thick, black] (1.2,-0.9) to (0,-2.1);
  \draw[ultra thick, black, postaction=decorate] (0,-2.1) to (.6,-2.7);
  \draw[ultra thick, black, postaction=decorate] (1.2,-2.1) to (.6,-2.7);
  \draw[ultra thick, black, postaction=decorate] (0.6,-2.7) to (.6,-3.3);
  \draw[ultra thick, black, postaction=decorate] (0,.9) to (.6,.3);
  \draw[ultra thick, black, postaction=decorate] (.6,.3) to (.6,-.3);
  \draw[ultra thick, black, postaction=decorate] (1.2,.9) to (.6,0.3);
  \draw[ultra thick, black] (.6,-.3) to (0,-0.9);
  \draw[ultra thick, black] (.6,-.3) to (1.2,-0.9);
\end{scope}
   \node at (0,1.1) {$\beta$};
   \node at (1.2,1.1) {$\alpha$};
   \node at (1.1,0) {$\gamma_k$};
   \node at (-0.2,-2.5) {$\alpha$};
   \node at (1.4,-2.5) {$\beta$};
   \node at (.6,-3.6) {$\gamma$};
\end{tikzpicture} } = \hackcenter{\begin{tikzpicture}[scale=1.1]
\begin{scope}[decoration={markings, mark=at position 0.5 with {\arrow{>}}}]
  \draw[ultra thick, black, postaction=decorate] (0,0) to (.6,-.6);
  \draw[ultra thick, black, postaction=decorate] (.6,-.6) to (.6,-1.2);
  \draw[ultra thick, black, postaction=decorate] (1.2,0) to (.6,-.6);
\end{scope}
   \node at (0,0.2) {$\beta$};
   \node at (1.2,0.2) {$\alpha$};
   \node at (.6,-1.4) {$\gamma$};
\end{tikzpicture} }.
\end{split}
\end{equation}
\end{proof}

We now review the following decomposition.

\begin{definition}[Clebsch-Gordan decomposition~\cite{CM}]
The maps $Y^{\alpha,\beta}_\gamma$ and $Y_{\alpha,\beta}^\gamma$ can be expressed as a \emph{Clebsch-Gordan decomposition}:
\begin{equation}
Y_{\gamma}^{\alpha,\beta}(v_c^\gamma) = \sum\limits_{a+b-c=(\alpha+\beta+r-1-\gamma)/2} A_{a,b,c}^{\alpha,\beta,\gamma} v_a^\alpha \otimes v_b^\beta
\end{equation}
\begin{equation}
Y_{\alpha,\beta}^\gamma (v_a^\alpha \otimes v_b^\beta) = B^{\alpha,\beta,\gamma}_{a,b,c} v_c^\gamma
\end{equation}
where the coefficient $A^{\alpha,\beta,\gamma}_{a,b,c}$ is called the \emph{Clebsch-Gordan quantum coefficient} (CGQC).
\end{definition}

We refer the reader to~\cite[Subsection 6.2]{CGP14} to obtain the prescription for computing the coefficients: this requires translating the data of~\cite{CM} to~\cite{CGP14} via the functor $U\text{-cat} \rightarrow \mathcal{C}$ (see~\cite[Subsection 6.2]{CGP14} for the definition of $U\text{-cat}$). 

\begin{notation}
There is an isomorphism $w_\alpha: V_\alpha \rightarrow V_{-\alpha}^*$ provided in this prescription, that we will use later, represented in graphical calculus as follows:
\begin{equation}
w_\alpha=\hackcenter{\begin{tikzpicture}[scale=1.1]
\begin{scope}[decoration={markings, mark=at position 0.9 with {\arrow{>}}}]
  \draw (-0.3,-0.3) rectangle (0.3,0.3);
  \draw (0,0) node {$w_\alpha$};
  \draw[ultra thick, black, postaction=decorate] (0,0.3) to (0,0.9);
  \draw[ultra thick, black, postaction=decorate] (0,-0.3) to (0,-0.9);
  \node at (0,-1.1) {$\alpha$};
  \node at (0,1.1) {$-\alpha$};
\end{scope}
\end{tikzpicture} } ~=:~ \hackcenter{\begin{tikzpicture}[scale=1.1]
\begin{scope}[decoration={markings, mark=at position 0.9 with {\arrow{>}}}]
  \draw[ultra thick, black, postaction=decorate] (0,0) to (0,0.9);
  \draw[ultra thick, black, postaction=decorate] (0,0) to (0,-0.9);
  \node at (0,0) [circle,fill,inner sep=2pt]{};
  \node at (0,-1.1) {$\alpha$};
  \node at (0,1.1) {$-\alpha$};
\end{scope}
\end{tikzpicture} }, \qquad 
\id_{V_\alpha}=\hackcenter{\begin{tikzpicture}[scale=1.1]
\begin{scope}[decoration={markings, mark=at position 0.6 with {\arrow{>}}}]
  \draw[ultra thick, black, postaction=decorate] (0,0.9) to (0,-0.9);
  \node at (0,-1.1) {$\alpha$};
  \node at (0,1.1) {$\alpha$};
\end{scope}
\end{tikzpicture} } ~=~ \hackcenter{\begin{tikzpicture}[scale=1.1]
\begin{scope}[decoration={markings, mark=at position 0.65 with {\arrow{>}}}]
  \draw[ultra thick, black, postaction=decorate] (0,0.9) to (0,0.4);
  \draw[ultra thick, black, postaction=decorate] (0,-0.4) to (0,0.4);
  \draw[ultra thick, black, postaction=decorate] (0,-0.4) to (0,-0.9);
  \node at (0,0.4) [circle,fill,inner sep=2pt]{};
  \node at (0,-0.4) [circle,fill,inner sep=2pt]{};
  \node at (0,-1.1) {$\alpha$};
  \node at (0.5,0) {$-\alpha$};
  \node at (0,1.1) {$\alpha$};
\end{scope}
\end{tikzpicture} } = w_\alpha^{-1} \circ w_\alpha.
\end{equation}
\end{notation}
Note that $B^{\alpha,\beta,\gamma}_{a,b,c}$ can be expressed in terms of $A^{\alpha,\beta,\gamma}_{a,b,c}$. In this article, we do not work with explicit values of these coefficients, but rather provide an algorithm for computing $(Y_{\gamma}^{\alpha,\beta})^{\dagger}$ in terms of $Y_{\alpha,\beta}^{\gamma}$.

\begin{lemma} \label{lemma:dagger}
Let $\gamma=\alpha+\beta+r-1-2k$ for $k\in \{ 0, 1, \dots, r-1 \}$. Then, \begin{equation*}(Y_{\gamma}^{\alpha,\beta})^\dagger = C^{\alpha,\beta}_\gamma  Y_{\alpha,\beta}^\gamma\end{equation*} where \begin{equation*}C^{\alpha,\beta}_\gamma:= \frac{\overline{\left( A_{k,0,0}^{\beta,\alpha,\gamma} \prod\limits_{i=0}^k [i][i-\beta]\right)}}{B_{0,k,0}^{\alpha,\beta,\gamma}}\end{equation*} and $\overline{(-)}$ denotes the complex conjugation of $(-)$.
\end{lemma}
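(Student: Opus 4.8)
The plan is to pin down $(Y_{\gamma}^{\alpha,\beta})^\dagger$ as a scalar multiple of $Y_{\alpha,\beta}^\gamma$ and then compute that scalar by testing the adjunction on a single well-chosen weight vector. Since $\dagger$ is a contravariant antilinear functor, $(Y_{\gamma}^{\alpha,\beta})^\dagger$ lies in $\Hom(V_\alpha \otimes V_\beta, V_\gamma)$; because $\gamma$ is generic and $V_\gamma$ is simple, this $\Hom$-space is one-dimensional and spanned by $Y_{\alpha,\beta}^\gamma$, so I may immediately write $(Y_{\gamma}^{\alpha,\beta})^\dagger = C_\gamma^{\alpha,\beta}\, Y_{\alpha,\beta}^\gamma$ for a unique constant. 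Everything then reduces to identifying this constant.

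To extract $C_\gamma^{\alpha,\beta}$ I would use the defining object-level adjunction property of the Hermitian dagger: for $f\colon V\to W$ one has $\langle f x, y\rangle_W = \langle x, f^\dagger y\rangle_V$, where $\langle\cdot,\cdot\rangle_{(-)}$ are the Hermitian forms carried by the objects of $\mathcal{D}^\dagger$. Applying this to $f=Y_\gamma^{\alpha,\beta}$ with $x=v_0^\gamma$ the highest weight vector and $y=v_0^\alpha\otimes v_k^\beta$, the Clebsch-Gordan decomposition collapses the left-hand side to the single coefficient $A_{0,k,0}^{\alpha,\beta,\gamma}$ multiplied by the norms $\langle v_0^\alpha,v_0^\alpha\rangle\langle v_k^\beta,v_k^\beta\rangle$, while the right-hand side becomes $\overline{C_\gamma^{\alpha,\beta}\,B_{0,k,0}^{\alpha,\beta,\gamma}}$ multiplied by $\langle v_0^\gamma,v_0^\gamma\rangle$ (here $Y_{\alpha,\beta}^\gamma(v_0^\alpha\otimes v_k^\beta)=B_{0,k,0}^{\alpha,\beta,\gamma}v_0^\gamma$ by weight considerations). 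Solving this scalar equation yields $C_\gamma^{\alpha,\beta}$ up to the ratio of Hermitian norms and a single complex conjugation, which is consistent with the conjugate appearing in the claimed formula.

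The remaining input is the explicit evaluation of those Hermitian norms. The form on each $V_\beta$ is diagonal in the weight basis, since distinct weights are orthogonal, and compatibility of the form with the action $E.v_i=\frac{\{i\}\{i-\beta\}}{\{1\}^2}v_{i-1}$, $F.v_i=v_{i+1}$ under the $*$-structure of $\overline{U}_q^H\mathfrak{sl}(2)$ recorded in~\cite[Table (28)]{GLPMS} gives the recursion $\langle v_i^\beta,v_i^\beta\rangle = [i][i-\beta]\,\langle v_{i-1}^\beta,v_{i-1}^\beta\rangle$. Iterating this produces precisely a product of the factors $[i][i-\beta]$, which is the source of the product in $C_\gamma^{\alpha,\beta}$. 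After normalising the highest weight vectors so that the residual ratio $\langle v_0^\alpha,v_0^\alpha\rangle\langle v_0^\beta,v_0^\beta\rangle/\langle v_0^\gamma,v_0^\gamma\rangle$ is controlled, only the conjugated product survives.

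Finally, the appearance of the \emph{opposite-order} coefficient $A_{k,0,0}^{\beta,\alpha,\gamma}$ rather than $A_{0,k,0}^{\alpha,\beta,\gamma}$ must be accounted for, and this is exactly where Proposition~\ref{prop:X-map} enters: the identity $\mathrm{X}_{V_\alpha,V_\beta}Y_\gamma^{\alpha,\beta}=Y_\gamma^{\beta,\alpha}$, together with the unitarity of the balanced half-braiding $\mathrm{X}$ in the Hermitian category, lets me trade the decomposition of $Y_\gamma^{\alpha,\beta}$ for that of $Y_\gamma^{\beta,\alpha}$, converting $v_0^\alpha\otimes v_k^\beta$ into $v_k^\beta\otimes v_0^\alpha$ and $A_{0,k,0}^{\alpha,\beta,\gamma}$ into $A_{k,0,0}^{\beta,\alpha,\gamma}$. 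I expect the main obstacle to be precisely this bookkeeping: reconciling the categorical definition of $\dagger$ with the concrete operator adjoint, correctly fixing the Hermitian form on the tensor product $V_\alpha\otimes V_\beta$ (which in a Hermitian ribbon category is governed by the half-twist, and hence by $\mathrm{X}$, rather than by the naive tensor form), and tracking every complex conjugation so that the assembled constant matches $C_\gamma^{\alpha,\beta}$ exactly.
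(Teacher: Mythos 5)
Your proposal follows essentially the same route as the paper's proof: both pin down $(Y_{\gamma}^{\alpha,\beta})^\dagger = C^{\alpha,\beta}_\gamma Y_{\alpha,\beta}^\gamma$ by one-dimensionality of the $\Hom$-space, then evaluate the pairing $\bigl((Y_{\gamma}^{\alpha,\beta})^\dagger (v_0^\alpha \otimes v_k^\beta),\, v_0^\gamma\bigr)$ in two ways, using the Clebsch--Gordan coefficients, the weight-vector norms $\prod_i [i][i-\beta]$, and crucially Proposition~\ref{prop:X-map} together with the fact that the Hermitian form on $V_\alpha\otimes V_\beta$ is twisted by $\tau\mathrm{X}_{V_\alpha,V_\beta}$ rather than the naive product form, which is exactly how the opposite-order coefficient $A_{k,0,0}^{\beta,\alpha,\gamma}$ arises in the paper as well. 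The bookkeeping you flag as the main obstacle is precisely what the paper's chain of equalities carries out, so your plan is a faithful reconstruction of its argument.
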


\begin{proof}
Since $\gamma = \alpha + \beta + r - 1 - 2k$, it follows that $Y_{\alpha,\beta}^{\gamma}(v_0^\alpha \otimes v_k^\beta) \neq 0$.

On one hand, 
\begin{align*}
((Y_{\gamma}^{\alpha,\beta})^\dagger (v_0^\alpha \otimes v_k^\beta), v_0^\gamma)_{V_\gamma} & = (C_{\gamma}^{\alpha,\beta}  Y_{\alpha,\beta}^\gamma (v_0^\alpha \otimes v_k^\beta), v_0^\gamma)_{V_\gamma} \\
& = (C^{\alpha,\beta}_\gamma  B^{\alpha,\beta,\gamma}_{0,k,0} v_0^\gamma, v_0^\gamma)_{V_\gamma} \\
& = \overline{C^{\alpha,\beta}_\gamma  B^{\alpha,\beta,\gamma}_{0,k,0}}.
\end{align*}
On the other hand, utilizing Proposition~\ref{prop:X-map},
\begin{align*}
((Y_{\gamma}^{\alpha,\beta})^\dagger (v_0^\alpha \otimes v_k^\beta), v_0^\gamma)_{V_\gamma} & = (v_0^\alpha \otimes v_k^\beta, Y_{\gamma}^{\alpha,\beta}(v_0^\gamma))_{V_\alpha \otimes V_\beta}\\
& = (v_0^\alpha \otimes v_k^\beta, \tau X_{V_\alpha, V_\beta} Y_{\gamma}^{\alpha,\beta}(v_0^\gamma))_{p}\\
& = (v_0^\alpha \otimes v_k^\beta, \tau Y_{\gamma}^{\beta, \alpha} (v_0^\gamma))_{p} \\
& = A^{\beta,\alpha,\gamma}_{k,0,0}(v_0^\alpha \otimes v_k^\beta, \tau (v_k^\beta \otimes v_0^\alpha))_{p} \\
& = A^{\beta,\alpha,\gamma}_{k,0,0} (v_0^\alpha, v_0^\alpha)_{V_\alpha}(v_k^\beta, v_k^\beta)_{V_\beta}\\
& = A^{\beta,\alpha,\gamma}_{k,0,0}\prod\limits_{i=0}^k [i][i-\beta].
\end{align*}
\end{proof}

\begin{proposition} \label{prop:dagger-coeff}
We have the following identity
\begin{equation}
\left( \hackcenter{\begin{tikzpicture}[scale=1]
\begin{scope}[decoration={markings, mark=at position 0.5 with {\arrow{>}}}]
  \draw[ultra thick, black, postaction=decorate] (0,0) to (0.6,-0.6);
  \draw[ultra thick, black, postaction=decorate] (0.6,-0.6) to (1.2,-1.2);
  \draw[ultra thick, dotted] (1.2,-1.2) to (1.8,-1.8);
  \draw[ultra thick, black, postaction=decorate] (1.8,-1.8) to (2.4,-2.4);
  \draw[ultra thick, black, postaction=decorate] (1.2,0) to (.6,-.6);
  \draw[ultra thick, black, postaction=decorate] (2.4,0) to (1.2,-1.2);
  \draw[ultra thick, black, postaction=decorate] (3.6,0) to (1.8,-1.8);
  \draw[ultra thick, black, postaction=decorate] (4.8,0) to (2.4,-2.4);
  \draw[ultra thick, black, postaction=decorate] (2.4,-2.4) to (2.4,-3.3);
\end{scope}
   \node at (0,0.2) {$\alpha_1$};
   \node at (1.2,0.2) {$\alpha_2$};
   \node at (2.4,0.2) {$\alpha_3$};
   \node at (2.9,0.2) {$\cdots$};
   \node at (3.6,0.2) {$\alpha_{n-1}$};
   \node at (4.8,0.2) {$\alpha_n$};
   \node at (.7,-1.1) {$\gamma_1$};
   \node at (1.75,-2.3) {$\gamma_{n-2}$};
   \node at (2.4,-3.4) {$\gamma_{n-1}$};
\end{tikzpicture} } \right)^{\dagger} ~=~ C^{\alpha_1, \dots, \alpha_n}_{\gamma_1, \dots, \gamma_{n-1}}
\hackcenter{\begin{tikzpicture}[scale=0.9]
\begin{scope}[decoration={markings, mark=at position 0.6 with {\arrow{>}}}]
  \draw[ultra thick, black, postaction=decorate] (0.6,0.6) to (0,0);
  \draw[ultra thick, black, postaction=decorate] (1.2,1.2) to (0.6,0.6);
  \draw[ultra thick, dotted] (1.8,1.8) to (1.2,1.2);
  \draw[ultra thick, black, postaction=decorate] (2.4,2.4) to (1.8,1.8);
  \draw[ultra thick, black, postaction=decorate] (.6,.6) to (1.2,0);
  \draw[ultra thick, black, postaction=decorate] (1.2,1.2) to (2.4,0);
  \draw[ultra thick, black, postaction=decorate] (1.8,1.8) to (3.6,0);
  \draw[ultra thick, black, postaction=decorate] (2.4,2.4) to (4.8,0);
  \draw[ultra thick, black, postaction=decorate] (2.4,3.3) to (2.4,2.4);
\end{scope}
   \node at (0,-0.2) {$\alpha_1$};
   \node at (1.2,-0.2) {$\alpha_2$};
   \node at (2.4,-0.2) {$\alpha_3$};
   \node at (2.9,0) {$\cdots$};
   \node at (3.6,-0.2) {$\alpha_{n-1}$};
   \node at (4.8,-0.2) {$\alpha_{n}$};
   \node at (.6,1.1) {$\gamma_1$};
   \node at (1.75,2.5) {$\gamma_{n-2}$};
   \node at (2.4,3.5) {$\gamma_{n-1}$};
\end{tikzpicture} }
\end{equation}
where $C^{\alpha_1, \dots, \alpha_n}_{\gamma_1, \dots, \gamma_{n-1}} := \prod\limits_{j=1}^{n-1} C^{\gamma_{j-1}, \alpha_{j+1}}_{\gamma_j}$ such that $\gamma_0 := \alpha_1$.
\end{proposition}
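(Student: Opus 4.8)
The plan is to reduce the identity to the single-vertex case already established in Lemma~\ref{lemma:dagger} by using the functoriality of the Hermitian dagger on $\mathcal{D}^\dagger$. First I would write the $\alpha$-fusion tree on the left as the planar composite of its trivalent splitting vertices. Setting $\gamma_0 := \alpha_1$ and reading the diagram from bottom to top, the tree factors, for $j = n-1, n-2, \dots, 1$, through the vertices $Y^{\gamma_{j-1},\alpha_{j+1}}_{\gamma_j}\in\Hom(V_{\gamma_j}, V_{\gamma_{j-1}}\otimes V_{\alpha_{j+1}})$, each tensored on the right with the identity on the remaining outer legs. Explicitly,
\[
Y_{\mathbf{p}} = B_1 \circ B_2 \circ \cdots \circ B_{n-1}, \qquad B_j := Y^{\gamma_{j-1},\alpha_{j+1}}_{\gamma_j} \otimes \id_{V_{\alpha_{j+2}}\otimes\cdots\otimes V_{\alpha_n}},
\]
with $B_{n-1} = Y^{\gamma_{n-2},\alpha_n}_{\gamma_{n-1}}$ (identity on the empty outer tensorand) applied first and $B_1$ applied last.

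Next I would invoke the three structural properties of the dagger guaranteed by the Hermitian ribbon structure of $\mathcal{D}^\dagger$ (\cite{GLPMS}): it is contravariant, hence reverses the order of composition; it is monoidal, hence $(f\otimes g)^\dagger = f^\dagger\otimes g^\dagger$; and it fixes identities, $\id^\dagger=\id$. Applying these to the factorization gives
\[
Y_{\mathbf{p}}^\dagger = B_{n-1}^\dagger \circ \cdots \circ B_2^\dagger \circ B_1^\dagger, \qquad B_j^\dagger = \big(Y^{\gamma_{j-1},\alpha_{j+1}}_{\gamma_j}\big)^\dagger \otimes \id .
\]
Lemma~\ref{lemma:dagger} evaluates each vertex dagger as $\big(Y^{\gamma_{j-1},\alpha_{j+1}}_{\gamma_j}\big)^\dagger = C^{\gamma_{j-1},\alpha_{j+1}}_{\gamma_j}\,Y_{\gamma_{j-1},\alpha_{j+1}}^{\gamma_j}$. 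Since composition and tensor product of morphisms are $\mathbb{C}$-bilinear, the scalar coefficients pull out to the front, yielding
\[
Y_{\mathbf{p}}^\dagger = \Big(\prod_{j=1}^{n-1} C^{\gamma_{j-1},\alpha_{j+1}}_{\gamma_j}\Big)\big(B_{n-1}' \circ \cdots \circ B_1'\big), \qquad B_j' := Y_{\gamma_{j-1},\alpha_{j+1}}^{\gamma_j}\otimes\id .
\]
A direct reading of the right-hand diagram shows that $B_{n-1}'\circ\cdots\circ B_1'$ is exactly the reflected (co-)tree, and the prefactor is by definition $C^{\alpha_1,\dots,\alpha_n}_{\gamma_1,\dots,\gamma_{n-1}}$, which closes the argument. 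One may equivalently package this as an induction on $n$: peel off the top vertex $Y^{\alpha_1,\alpha_2}_{\gamma_1}$, apply Lemma~\ref{lemma:dagger} to it, and apply the inductive hypothesis to the remaining $(n-1)$-leg subtree.

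The essential point requiring care is the interaction of the antilinear dagger with the collected scalars. Because the dagger is antilinear one might fear spurious conjugations; however, the factors $B_j$ carry no scalar prefactors, so each $C^{\gamma_{j-1},\alpha_{j+1}}_{\gamma_j}$ enters the product precisely as produced by Lemma~\ref{lemma:dagger} (its own conjugation being already built into the definition of $C^{\alpha,\beta}_\gamma$), and no additional conjugation appears once the scalars are extracted by bilinearity. The only remaining obstacle is bookkeeping: verifying that the monoidal dagger respects the nesting of the outer-leg identities and that reversing the composition order reproduces the correct top-to-bottom reading of the reflected tree. I expect this orientation and ordering check to be the genuinely delicate step, but it is dispatched by the same planar-isotopy reasoning already used in Lemma~\ref{lemma:fusion-channel} and Lemma~\ref{lemma:isotoped-bubble}.
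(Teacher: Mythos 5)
Your proposal is correct and follows essentially the same route as the paper: the paper's proof simply invokes Lemma~\ref{lemma:dagger} together with the contravariance, monoidality, antilinearity, and identity-preservation of $\dagger$ in a Hermitian ribbon category, which is exactly the argument you spell out via the vertex-by-vertex factorization. Your explicit handling of the scalar extraction (noting no spurious conjugations arise since each $B_j$ carries no scalar prefactor) is just a fleshed-out version of what the paper leaves to the reader.
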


\begin{proof}
This follows immediately from Lemma~\ref{lemma:dagger} and the definition and properties of $\dagger$ in a Hermitian ribbon category.
\end{proof}

\begin{lemma} \label{lemma:pairing}
We have the following expression:
\begin{equation*}
\left\langle \hackcenter{\begin{tikzpicture}[scale=1]
\begin{scope}[decoration={markings, mark=at position 0.5 with {\arrow{>}}}]
  \draw[ultra thick, black, postaction=decorate] (0,0) to (0.6,-0.6);
  \draw[ultra thick, black, postaction=decorate] (0.6,-0.6) to (1.2,-1.2);
  \draw[ultra thick, dotted] (1.2,-1.2) to (1.8,-1.8);
  \draw[ultra thick, black, postaction=decorate] (1.8,-1.8) to (2.4,-2.4);
  \draw[ultra thick, black, postaction=decorate] (1.2,0) to (.6,-.6);
  \draw[ultra thick, black, postaction=decorate] (2.4,0) to (1.2,-1.2);
  \draw[ultra thick, black, postaction=decorate] (3.6,0) to (1.8,-1.8);
  \draw[ultra thick, black, postaction=decorate] (4.8,0) to (2.4,-2.4);
  \draw[ultra thick, black, postaction=decorate] (2.4,-2.4) to (2.4,-3.3);
\end{scope}
   \node at (0,0.2) {$\alpha_1$};
   \node at (1.2,0.2) {$\alpha_2$};
   \node at (2.4,0.2) {$\alpha_3$};
   \node at (2.9,0.2) {$\cdots$};
   \node at (3.6,0.2) {$\alpha_{n-1}$};
   \node at (4.8,0.2) {$\alpha_n$};
   \node at (.7,-1.1) {$\gamma_1$};
   \node at (1.75,-2.3) {$\gamma_{n-2}$};
   \node at (2.4,-3.4) {$\gamma_{n-1}$};
\end{tikzpicture} },
\hackcenter{\begin{tikzpicture}[scale=1]
\begin{scope}[decoration={markings, mark=at position 0.5 with {\arrow{>}}}]
  \draw[ultra thick, black, postaction=decorate] (0,0) to (0.6,-0.6);
  \draw[ultra thick, black, postaction=decorate] (0.6,-0.6) to (1.2,-1.2);
  \draw[ultra thick, dotted] (1.2,-1.2) to (1.8,-1.8);
  \draw[ultra thick, black, postaction=decorate] (1.8,-1.8) to (2.4,-2.4);
  \draw[ultra thick, black, postaction=decorate] (1.2,0) to (.6,-.6);
  \draw[ultra thick, black, postaction=decorate] (2.4,0) to (1.2,-1.2);
  \draw[ultra thick, black, postaction=decorate] (3.6,0) to (1.8,-1.8);
  \draw[ultra thick, black, postaction=decorate] (4.8,0) to (2.4,-2.4);
  \draw[ultra thick, black, postaction=decorate] (2.4,-2.4) to (2.4,-3.3);
\end{scope}
   \node at (0,0.2) {$\alpha_1$};
   \node at (1.2,0.2) {$\alpha_2$};
   \node at (2.4,0.2) {$\alpha_3$};
   \node at (2.9,0.2) {$\cdots$};
   \node at (3.6,0.2) {$\alpha_{n-1}$};
   \node at (4.8,0.2) {$\alpha_n$};
   \node at (.7,-1.1) {$\psi_1$};
   \node at (1.75,-2.3) {$\psi_{n-2}$};
   \node at (2.4,-3.4) {$\gamma_{n-1}$};
\end{tikzpicture} }
\right\rangle ~=~ C^{\alpha_1, \dots, \alpha_n}_{\gamma_1, \dots, \gamma_{n-1}} \prod\limits_{j=1}^{n-2} \frac{\delta_{\gamma_j\psi_j}}{\md(\gamma_{j})}.
\end{equation*}
\end{lemma}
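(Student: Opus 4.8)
The plan is to unwind the definition of the Hermitian pairing from Lemma~\ref{def:hermitian-pairing}, namely $\langle f, g\rangle = \mt_{V_{\gamma_{n-1}}}(f^\dagger g)$, and then collapse the resulting endomorphism to a scalar multiple of $\id_{V_{\gamma_{n-1}}}$ by recognizing a ``bubble.'' Both trees lie in $\mathcal{H}_{n,m,\alpha}^r = \Hom(V_{\gamma_{n-1}}, V_{\alpha_1}\otimes\cdots\otimes V_{\alpha_n})$ and share the common root label $\gamma_{n-1}$; since $\gamma_{n-1}$ is generic, $V_{\gamma_{n-1}}$ is projective, so the pairing of Lemma~\ref{def:hermitian-pairing} is well defined and the modified trace is the correct normalization.

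First I would apply Proposition~\ref{prop:dagger-coeff} to the left-hand tree (the one carrying the intermediate labels $\gamma_1, \dots, \gamma_{n-2}$), writing its $\dagger$ as $C^{\alpha_1, \dots, \alpha_n}_{\gamma_1, \dots, \gamma_{n-1}}$ times the mirror-reflected tree. Stacking this mirror tree on top of the right-hand tree (carrying labels $\psi_1, \dots, \psi_{n-2}$) produces precisely the nested-bubble diagram appearing in the implication derived in the proof of Lemma~\ref{lemma:character-identity}. Applying that iterated bubble-pop move collapses the composite to
\[
\frac{\delta_{\gamma_1\psi_1}\cdots\delta_{\gamma_{n-2}\psi_{n-2}}}{\md(\gamma_1)\cdots \md(\gamma_{n-1})}\,\id_{V_{\gamma_{n-1}}},
\]
where the would-be final Kronecker delta is trivially $1$ because the two trees already agree on the root label $\gamma_{n-1}$.

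Finally I would take the modified trace. Using $\mt_{V_{\gamma_{n-1}}}(\id_{V_{\gamma_{n-1}}}) = \md(\gamma_{n-1})$ together with $\mathbb{C}$-linearity of $\mt$, the factor $\md(\gamma_{n-1})$ produced by the trace of the identity cancels the $\md(\gamma_{n-1})$ appearing in the denominator of the bubble pop, leaving
\[
\langle Y_{\mathbf{p}}, Y_{\mathbf{p'}}\rangle = C^{\alpha_1, \dots, \alpha_n}_{\gamma_1, \dots, \gamma_{n-1}}\prod_{j=1}^{n-2}\frac{\delta_{\gamma_j\psi_j}}{\md(\gamma_j)},
\]
which is the claimed identity.

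I expect the only genuine obstacle to be bookkeeping rather than mathematical difficulty: one must track carefully which $\md$ factor originates from the bubble pop versus from $\mt(\id)$, and reconcile the fact that the bubble-pop product ranges over $j = 1, \dots, n-1$ while the stated answer ranges only to $n-2$ — the discrepancy being resolved exactly by the cancellation of the $\gamma_{n-1}$ terms. I would also confirm at the outset that $\dagger$ is applied to the correct argument, so that the coefficient $C^{\alpha_1,\dots,\alpha_n}_{\gamma_1,\dots,\gamma_{n-1}}$ carries the $\gamma$-labels (from $Y_{\mathbf{p}}$) and not the $\psi$-labels, consistent with the convention $\langle f, g\rangle = \mt(f^\dagger g)$.
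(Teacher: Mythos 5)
Your proposal is correct and follows essentially the same route as the paper, whose entire proof is the one-line citation of the Hermitian pairing definition, Proposition~\ref{prop:dagger-coeff}, and a sequence of bubble pops — exactly the steps you carry out. Your bookkeeping is also right: applying $\dagger$ to the $\gamma$-labeled tree, popping the nested bubbles as in Lemma~\ref{lemma:character-identity}, and cancelling the $\md(\gamma_{n-1})$ from the bubble pop against $\mt_{V_{\gamma_{n-1}}}(\id_{V_{\gamma_{n-1}}})=\md(\gamma_{n-1})$ yields the stated formula with the product running only to $n-2$.
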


\begin{proof}
This follows from definition of the Hermitian pairing, Proposition~\ref{prop:dagger-coeff}, and applying a sequence of bubble pops.
\end{proof}

\begin{remark}
Recall that our Hermitian category $\mathcal{D}^\dagger$ only contain the $V_\alpha$ representations parametrized by $\alpha \in (\mathbb{R} \setminus \mathbb{Z}) \cup r \mathbb{Z}$. Moreover, real $\alpha$ values admit real modified dimensions, i.e. $\md(\alpha) \in \mathbb{R}$. Furthermore, the Hermitian pairing on fusion trees also admit real values (see~\cite{neglecton}).
\end{remark}

\subsection{Special $\alpha$-Fusion Trees} Inspired by Anghel's topological model, we introduce a new type of fusion tree.

\begin{definition}{(Special $\alpha$-Fusion Tree)} \label{def:special-fusion-tree}
Let $\mathbf{p}=(\alpha, \gamma_1, \dots, \gamma_{n-1})\in P_{n,m,\alpha}^r$ be a path. We refer to an element of the form
\begin{equation}
Y_{\mathbf{p}}^s:=
\hackcenter{\begin{tikzpicture}[baseline=0, thick, scale=0.5, shift={(0,0)}]
\begin{scope}[shift={(1,-5.5)}]
\begin{scope}[decoration={markings, mark=at position 0.6 with {\arrow{>}}}]
    \draw[ultra thick, black, postaction=decorate] (0,4.5) to (0.8125,3.75);
    \draw[ultra thick, black, postaction=decorate] (1,4.5) to (0.8125,3.75);
    \draw[ultra thick, black, postaction=decorate] (0.8125,3.75) to (1.625,3);
    \draw[ultra thick, black, postaction=decorate] (2,4.5) to (1.625,3);
    \draw[ultra thick, black, dotted] (1.625, 3) to (2.4375, 2.25);
    \draw[ultra thick, black, postaction=decorate] (3, 4.5) to (2.4375, 2.25);
    \draw[ultra thick, black, postaction=decorate] (2.4375,2.25) to (3.25,1.5);
    \draw[ultra thick, black, postaction=decorate] (4, 4.5) to (3.25,1.5);
    \draw[ultra thick, black, postaction=decorate] (5, 4.5) to (4.0625,0.75);
    \draw[ultra thick, black, postaction=decorate] (3.25,1.5) to (4.0625,0.75);
    \draw[ultra thick, black, postaction=decorate] (6, 4.5) to (4.875,0);
    \draw[ultra thick, black, postaction=decorate] (4.0625,0.75) to (4.875,0);
    \draw[ultra thick, black, postaction=decorate] (7, 4.5) to (5.6875,-0.75);
    \draw[ultra thick, black, dotted] (4.875,0) to (5.6875,-0.75);
    \draw[ultra thick, black, postaction=decorate] (8, 4.5) to (6.5,-1.5);
    \draw[ultra thick, black, postaction=decorate] (5.6875,-0.75) to (6.5,-1.5);
    \draw[ultra thick, black, postaction=decorate] (6.5,-1.5) to (6.5,-2.5);
\end{scope}
\end{scope}
\draw (1,-0.8) node {$\scs \alpha$};
\draw (2,-0.8) node {$\scs \alpha$};
\draw (3,-0.8) node {$\scs \alpha$};
\draw (3.5,-1.25) node {$\scs \cdots$};
\draw (4,-0.8) node {$\scs \alpha$};
\draw (5,-0.8) node {$\scs \alpha$};
\draw (6,-0.8) node {$\scs -\alpha$};
\draw (7,-0.8) node {$\scs -\alpha$};
\draw (7.5,-1.25) node {$\scs \cdots$};
\draw (8,-0.8) node {$\scs -\alpha$};
\draw (9,-0.8) node {$\scs -\alpha$};
\draw (2.2,-2.5) node {$\scs \gamma_1$};
\draw (3.3,-4) node {$\scs \gamma_{n-2}$};
\draw (4.25,-4.75) node {$\scs \gamma_{n-1}$};
\draw (5.2,-5.6) node {$\scs \gamma_{n-2}$};
\draw (6.5,-7) node {$\scs \gamma_{1}$};
\draw (7.5,-8.2) node {$\scs \alpha$};
\end{tikzpicture} } \in \Hom(V_{\alpha}, V_{\alpha}^{\otimes n} \otimes V_{-\alpha}^{\otimes (n-1)}) 
\end{equation}
as a \emph{special $\alpha$-fusion tree}.
\end{definition}

To motivate this definition, we provide an alternative proof of Theorem~\ref{thm:CGP-formula} in which special $\alpha$-fusion trees naturally appear.

\begin{proof}
First, we identify the partial braid closure in the following way using $w_\alpha$
\begin{equation} \label{eq:special-identification}
\hackcenter{\begin{tikzpicture}[baseline=0, thick, scale=0.75, shift={(0,0)}]
\draw (0.5,.5) rectangle (4.5,-.5);
\draw (2.5,0) node {$\beta_n$};
\draw (3,1) node {$\cdots$};
\draw (3,-1) node {$\cdots$};
\draw (5.5,0) node {$\cdots$};
\draw (1,3) node[above] {$\alpha$};
\draw (1,-3) node[below] {$\alpha$};
\draw (2,1) node[left] {$\alpha$};
\draw (4,1) node[left] {$\alpha$};
\draw[ultra thick, black] (4,0.5) to (4, 1);
\draw[ultra thick, black] (4,1) to [out=90,in=90] (5,1);
\draw[ultra thick, black] (4,-1) to [out=-90,in=-90] (5,-1);
\draw[ultra thick, black] (2,0.5) to (2, 1.5);
\draw[ultra thick, black] (2,1.5) to [out=90,in=90] (6,1.5);
\draw[ultra thick, black] (2,-1.5) to [out=-90,in=-90] (6,-1.5);
\draw[ultra thick, black] (2,-0.5) to (2, -1.5);
\draw[ultra thick, black] (4,-0.5) to (4, -1);
\begin{scope}[decoration={markings, mark=at position 0.5 with {\arrow{>}}}]
    \draw[ultra thick, black, postaction=decorate] (1,3) to (1, 0.5);
    \draw[ultra thick, black, postaction=decorate] (1,-0.5) to (1, -3);
    \draw[ultra thick, black, postaction=decorate] (5,-1) to (5,1);
    \draw[ultra thick, black, postaction=decorate] (6,-1.5) to (6,1.5);
\end{scope}
\end{tikzpicture} }
~=~
\hackcenter{\begin{tikzpicture}[baseline=0, thick, scale=0.75, shift={(0,0)}]
\draw (0.5,.5) rectangle (4.5,-.5);
\draw (2.5,0) node {$\beta_n$};
\draw (3,.85) node {$\cdots$};
\draw (3,-.85) node {$\cdots$};
\draw (5.5,0) node {$\cdots$};
\draw (1,3) node[above] {$\alpha$};
\draw (1,-3) node[below] {$\alpha$};
\draw (1.9,.85) node[left] {$\alpha$};
\draw (3.9,.85) node[left] {$\alpha$};
\draw[ultra thick, black] (4,1) to [out=90,in=90] (5,1);
\draw[ultra thick, black] (4,-1) to [out=-90,in=-90] (5,-1);
\draw[ultra thick, black] (2,1.5) to [out=90,in=90] (6,1.5);
\draw[ultra thick, black] (2,-1.5) to [out=-90,in=-90] (6,-1.5);
\draw[ultra thick, black] (1,3) to (1, 0.5);
\draw[ultra thick, black] (1,-0.5) to (1, -3);
\draw[ultra thick, black] (4,1) to (4, 0.5);
\draw[ultra thick, black] (4,-0.5) to (4, -1);
\draw[ultra thick, black] (5,1) to (5,-1);
\draw[ultra thick, black] (6,1.5) to (6,-1.5);
\draw[ultra thick, black] (2,-0.5) to (2, -1.5);
\draw[ultra thick, black] (2,1.5) to (2, 0.5);
\begin{scope}[decoration={markings, mark=at position 0.75 with {\arrow{>}}}]
    \draw[ultra thick, black, postaction=decorate] (1,1.1) to (1, 0.6);
    \draw[ultra thick, black, postaction=decorate] (1,-0.6) to (1, -1.1);
    \draw[ultra thick, black, postaction=decorate] (2,1.1) to (2, 0.6);
    \draw[ultra thick, black, postaction=decorate] (2,-0.6) to (2, -1.1);
    \draw[ultra thick, black, postaction=decorate] (4,1.1) to (4, 0.6);
    \draw[ultra thick, black, postaction=decorate] (4,-0.6) to (4, -1.1);
    \draw[ultra thick, black, postaction=decorate] (5,1.1) to (5, 0.6);
    \draw[ultra thick, black, postaction=decorate] (5,-0.6) to (5, -1.1);
    \draw[ultra thick, black, postaction=decorate] (6,1.1) to (6, 0.6);
    \draw[ultra thick, black, postaction=decorate] (6,-0.6) to (6, -1.1);
\end{scope}
\node at (4.5,1.3) [circle,fill,inner sep=2pt]{};
\node at (4.5,-1.3) [circle,fill,inner sep=2pt]{};
\node at (4,2.67) [circle,fill,inner sep=2pt]{};
\node at (4,-2.67) [circle,fill,inner sep=2pt]{};
\draw[red,dashed] (0.5,0.6) rectangle (6.5,1.1);
\draw[red,dashed] (0.5,-0.6) rectangle (6.5,-1.1);
\end{tikzpicture} } .
\end{equation}
This time, we apply pinch moves (Lemma~\ref{lemma:fusion-identity}) on $\id_{V_\alpha}^{\otimes n} \otimes \id_{V_{-\alpha}}^{\otimes (n-1)}$ (see the red-dashed rectangle in Equation~\eqref{eq:special-identification}). Then, we isotope the center piece

\begin{equation}
\hackcenter{\begin{tikzpicture}[baseline=0, thick, scale=0.5, shift={(0,0)}]
\draw (0.5,.5) rectangle (5.5,-.5);
\draw (3,0) node {$\scs \beta_n$};
\begin{scope}[shift={(1,-5.5)}]
\begin{scope}[decoration={markings, mark=at position 0.6 with {\arrow{>}}}]
    \draw[ultra thick, black, postaction=decorate] (0,4.5) to (0.8125,3.75);
    \draw[ultra thick, black, postaction=decorate] (1,4.5) to (0.8125,3.75);
    \draw[ultra thick, black, postaction=decorate] (0.8125,3.75) to (1.625,3);
    \draw[ultra thick, black, postaction=decorate] (2,4.5) to (1.625,3);
    \draw[ultra thick, black, dotted] (1.625, 3) to (2.4375, 2.25);
    \draw[ultra thick, black, postaction=decorate] (3, 4.5) to (2.4375, 2.25);
    \draw[ultra thick, black, postaction=decorate] (2.4375,2.25) to (3.25,1.5);
    \draw[ultra thick, black, postaction=decorate] (4, 4.5) to (3.25,1.5);
    \draw[ultra thick, black, postaction=decorate] (5, 4.5) to (4.0625,0.75);
    \draw[ultra thick, black, postaction=decorate] (3.25,1.5) to (4.0625,0.75);
    \draw[ultra thick, black, postaction=decorate] (6, 4.5) to (4.875,0);
    \draw[ultra thick, black, postaction=decorate] (4.0625,0.75) to (4.875,0);
    \draw[ultra thick, black, postaction=decorate] (7, 4.5) to (5.6875,-0.75);
    \draw[ultra thick, black, dotted] (4.875,0) to (5.6875,-0.75);
    \draw[ultra thick, black, postaction=decorate] (8, 4.5) to (6.5,-1.5);
    \draw[ultra thick, black, postaction=decorate] (5.6875,-0.75) to (6.5,-1.5);
    \draw[ultra thick, black, postaction=decorate] (6.5,-1.5) to (6.5,-2.5);
\end{scope}
\end{scope}
\begin{scope}[shift={(1,5.5)}]
\begin{scope}[decoration={markings, mark=at position 0.9 with {\arrow{>}}}]
    \draw[ultra thick, black, postaction=decorate] (6.5,2.5) to (6.5,1.5);
    \draw[ultra thick, black, postaction=decorate] (0.8125,-3.75) to (0,-4.5);
    \draw[ultra thick, black, postaction=decorate] (0.8125,-3.75) to (1,-4.5);
    \draw[ultra thick, black, postaction=decorate] (1.625,-3) to (0.8125,-3.75);
    \draw[ultra thick, black, postaction=decorate] (1.625,-3) to (2,-4.5);
    \draw[ultra thick, black, dotted] (2.4375,-2.25) to (1.625,-3);
    \draw[ultra thick, black, postaction=decorate] (2.4375,-2.25) to (3,-4.5);
    \draw[ultra thick, black, postaction=decorate] (3.25,-1.5) to (2.4375,-2.25);
    \draw[ultra thick, black, postaction=decorate] (3.25,-1.5) to (4,-4.5);
    \draw[ultra thick, black, postaction=decorate] (4.0625,-0.75) to (3.25,-1.5);
    \draw[ultra thick, black, postaction=decorate] (4.0625,-0.75) to (5,-4.5);
    \draw[ultra thick, black, postaction=decorate] (4.875,0) to (4.0625,-0.75);
    \draw[ultra thick, black, postaction=decorate] (4.875,0) to (6,-4.5);
    \draw[ultra thick, black, dotted] (5.6875,.75) to (4.875,0);
    \draw[ultra thick, black, postaction=decorate] (5.6875,0.75) to (7,-4.5);
    \draw[ultra thick, black, postaction=decorate] (6.5,1.5) to (5.6875,0.75);
    \draw[ultra thick, black, postaction=decorate] (6.5,1.5) to (8,-4.5);
\end{scope}
\end{scope}
\draw[ultra thick, black] (1,0.5) to (1,1);
\draw[ultra thick, black] (2,0.5) to (2,1);
\draw[ultra thick, black] (3,0.5) to (3,1);
\draw[ultra thick, black] (4,0.5) to (4,1);
\draw[ultra thick, black] (5,0.5) to (5,1);
\draw[ultra thick, black] (1,-0.5) to (1,-1);
\draw[ultra thick, black] (2,-0.5) to (2,-1);
\draw[ultra thick, black] (3,-0.5) to (3,-1);
\draw[ultra thick, black] (4,-0.5) to (4,-1);
\draw[ultra thick, black] (5,-0.5) to (5,-1);
\draw[ultra thick, black] (6,1) to (6,-1);
\draw[ultra thick, black] (7,1) to (7,-1);
\draw[ultra thick, black] (8,1) to (8,-1);
\draw[ultra thick, black] (9,1) to (9,-1);
\draw (0.7,0.75) node {$\scs \alpha$};
\draw (1.7,0.75) node {$\scs \alpha$};
\draw (2.7,0.75) node {$\scs \alpha$};
\draw (3.5,0.75) node {$\scs \cdots$};
\draw (4.3,0.75) node {$\scs \alpha$};
\draw (5.3,0.75) node {$\scs \alpha$};
\draw (6.5,0) node {$\scs -\alpha$};
\draw (7.5,0) node {$\scs \cdots$};
\draw (8.5,0) node {$\scs -\alpha$};
\draw (9.5,0) node {$\scs -\alpha$};
\draw (0.7,-0.75) node {$\scs \alpha$};
\draw (1.7,-0.75) node {$\scs \alpha$};
\draw (2.7,-0.75) node {$\scs \alpha$};
\draw (3.5,-0.75) node {$\scs \cdots$};
\draw (4.3,-0.75) node {$\scs \alpha$};
\draw (5.3,-0.75) node {$\scs \alpha$};
\draw (2.2,-2.5) node {$\scs \psi_1$};
\draw (3.3,-4) node {$\scs \psi_{n-2}$};
\draw (4.25,-4.75) node {$\scs \psi_{n-1}$};
\draw (5.3,-5.6) node {$\scs \psi_{n}$};
\draw (6.5,-7) node {$\scs \psi_{2n-3}$};
\draw (7.5,-8.2) node {$\scs \psi_{2n-2}$};
\draw (2.2,2.5) node {$\scs \gamma_1$};
\draw (3.3,4) node {$\scs \gamma_{n-2}$};
\draw (4.25,4.75) node {$\scs \gamma_{n-1}$};
\draw (5.3,5.6) node {$\scs \gamma_{n}$};
\draw (6.5,7) node {$\scs \gamma_{2n-3}$};
\draw (7.5,8.2) node {$\scs \gamma_{2n-2}$};
\end{tikzpicture} }
~=~
\hackcenter{\begin{tikzpicture}[baseline=0, thick, scale=0.6, shift={(0,0)}]
\begin{scope}[decoration={markings, mark=at position 0.5 with {\arrow{>}}}]
    \draw[ultra thick, black, postaction=decorate] (0,4.5) to (0.8125,3.75);
    \draw[ultra thick, black, postaction=decorate] (0.8125,3.75) to (1.625,3);
    \draw[ultra thick, black, dotted] (1.625, 3) to (2.4375, 2.25);
    \draw[ultra thick, black, postaction=decorate] (2.4375,2.25) to (3.25,1.5);
    \draw[ultra thick, black, postaction=decorate] (3.25,1.5) to (3.25,0.5);
\end{scope}
\begin{scope}[decoration={markings, mark=at position 0.8 with {\arrow{>}}}]
    \draw[ultra thick, black, postaction=decorate] (0.8125,3.75) to (1,4.5);
    \draw[ultra thick, black, postaction=decorate] (1.625,3) to (2,4.5);
    \draw[ultra thick, black, postaction=decorate] (2.4375, 2.25) to (3, 4.5);
    \draw[ultra thick, black, postaction=decorate] (3.25,1.5) to (4, 4.5);
\end{scope}
\draw (0,6.2) node {$\scs \gamma_{2n-2}$};
\draw (0.65,5) node {$\scs -\alpha$};
\draw (1.65,5) node {$\scs -\alpha$};
\draw (2.75,5) node {$\scs \cdots$};
\draw (4.5,5) node {$\scs -\alpha$};
\draw (0.8,2.9) node {$\scs \gamma_{2n-3}$};
\draw (2.5,1.5) node {$\scs \gamma_{n}$};
\draw (4.25,0.9) node {$\scs \gamma_{n-1}$};
\draw (2.75,-0.5) rectangle (3.75,0.5);
\draw (3.25,0) node {$\scs f$};
\begin{scope}[decoration={markings, mark=at position 0.9 with {\arrow{>}}}]
    \draw[ultra thick, black, postaction=decorate] (3.25,-0.5) to (3.25,-1.5);
    \draw[ultra thick, black, postaction=decorate] (0.8125,-3.75) to (0,-4.5);
    \draw[ultra thick, black, postaction=decorate] (1.625,-3) to (0.8125,-3.75);
    \draw[ultra thick, black, dotted] (2.4375,-2.25) to (1.625,-3);
    \draw[ultra thick, black, postaction=decorate] (3.25,-1.5) to (2.4375,-2.25);
\end{scope}
\begin{scope}[decoration={markings, mark=at position 0.3 with {\arrow{>}}}]
    \draw[ultra thick, black, postaction=decorate] (1,-4.5) to (0.8125,-3.75);
    \draw[ultra thick, black, postaction=decorate] (2,-4.5) to (1.625,-3);
    \draw[ultra thick, black, postaction=decorate] (3,-4.5) to (2.4375,-2.25);
    \draw[ultra thick, black, postaction=decorate] (4,-4.5) to (3.25,-1.5);
\end{scope}
\draw (0,-6.2) node {$\scs \psi_{2n-2}$};
\draw (0.65,-5) node {$\scs -\alpha$};
\draw (1.65,-5) node {$\scs -\alpha$};
\draw (2.75,-5) node {$\scs \cdots$};
\draw (4.5,-5) node {$\scs -\alpha$};
\draw (0.8,-2.9) node {$\scs \psi_{2n-3}$};
\draw (2.5,-1.5) node {$\scs \psi_{n}$};
\draw (4.25,-0.9) node {$\scs \psi_{n-1}$};
\draw[ultra thick, black] (4,4.5) to [out=90,in=90] (5,4.5);
\draw[ultra thick, black] (4,-4.5) to [out=-90,in=-90] (5,-4.5);
\draw[ultra thick, black] (5,4.5) to (5,-4.5);
\draw[ultra thick, black] (3,4.5) to [out=90,in=90] (6,4.5);
\draw[ultra thick, black] (3,-4.5) to [out=-90,in=-90] (6,-4.5);
\draw[ultra thick, black] (6,4.5) to (6,-4.5);
\draw[ultra thick, black] (2,4.5) to [out=90,in=90] (7,4.5);
\draw[ultra thick, black] (2,-4.5) to [out=-90,in=-90] (7,-4.5);
\draw[ultra thick, black] (7,4.5) to (7,-4.5);
\draw[ultra thick, black] (1,4.5) to [out=90,in=90] (8,4.5);
\draw[ultra thick, black] (1,-4.5) to [out=-90,in=-90] (8,-4.5);
\draw[ultra thick, black] (8,4.5) to (8,-4.5);
\draw[ultra thick, black] (0,4.5) to (0,6);
\draw[ultra thick, black] (0,-4.5) to (0,-6);
\end{tikzpicture} }~,~ f = \hackcenter{\begin{tikzpicture}[baseline=0, thick, scale=0.5, shift={(0,0)}]
\draw (0.5,.5) rectangle (5.5,-.5);
\draw (3,0) node {$\scs \beta_n$};
\begin{scope}[shift={(1,-5.5)}]
\begin{scope}[decoration={markings, mark=at position 0.6 with {\arrow{>}}}]
    \draw[ultra thick, black, postaction=decorate] (0,4.5) to (0.8125,3.75);
    \draw[ultra thick, black, postaction=decorate] (1,4.5) to (0.8125,3.75);
    \draw[ultra thick, black, postaction=decorate] (0.8125,3.75) to (1.625,3);
    \draw[ultra thick, black, postaction=decorate] (2,4.5) to (1.625,3);
    \draw[ultra thick, black, dotted] (1.625, 3) to (2.4375, 2.25);
    \draw[ultra thick, black, postaction=decorate] (3, 4.5) to (2.4375, 2.25);
    \draw[ultra thick, black, postaction=decorate] (2.4375,2.25) to (3.25,1.5);
    \draw[ultra thick, black, postaction=decorate] (4, 4.5) to (3.25,1.5);
    \draw[ultra thick, black, postaction=decorate] (3.25,1.5) to (3.25,0.5);
\end{scope}
\end{scope}
\begin{scope}[shift={(1,5.5)}]
\begin{scope}[decoration={markings, mark=at position 0.9 with {\arrow{>}}}]
    \draw[ultra thick, black, postaction=decorate] (3.25,-0.5) to (3.25,-1.5);
    \draw[ultra thick, black, postaction=decorate] (0.8125,-3.75) to (0,-4.5);
    \draw[ultra thick, black, postaction=decorate] (0.8125,-3.75) to (1,-4.5);
    \draw[ultra thick, black, postaction=decorate] (1.625,-3) to (0.8125,-3.75);
    \draw[ultra thick, black, postaction=decorate] (1.625,-3) to (2,-4.5);
    \draw[ultra thick, black, dotted] (2.4375,-2.25) to (1.625,-3);
    \draw[ultra thick, black, postaction=decorate] (2.4375,-2.25) to (3,-4.5);
    \draw[ultra thick, black, postaction=decorate] (3.25,-1.5) to (2.4375,-2.25);
    \draw[ultra thick, black, postaction=decorate] (3.25,-1.5) to (4,-4.5);
\end{scope}
\end{scope}
\draw[ultra thick, black] (1,0.5) to (1,1);
\draw[ultra thick, black] (2,0.5) to (2,1);
\draw[ultra thick, black] (3,0.5) to (3,1);
\draw[ultra thick, black] (4,0.5) to (4,1);
\draw[ultra thick, black] (5,0.5) to (5,1);
\draw[ultra thick, black] (1,-0.5) to (1,-1);
\draw[ultra thick, black] (2,-0.5) to (2,-1);
\draw[ultra thick, black] (3,-0.5) to (3,-1);
\draw[ultra thick, black] (4,-0.5) to (4,-1);
\draw[ultra thick, black] (5,-0.5) to (5,-1);
\draw (0.7,0.75) node {$\scs \alpha$};
\draw (1.7,0.75) node {$\scs \alpha$};
\draw (2.7,0.75) node {$\scs \alpha$};
\draw (3.5,0.75) node {$\scs \cdots$};
\draw (4.3,0.75) node {$\scs \alpha$};
\draw (5.3,0.75) node {$\scs \alpha$};
\draw (0.7,-0.75) node {$\scs \alpha$};
\draw (1.7,-0.75) node {$\scs \alpha$};
\draw (2.7,-0.75) node {$\scs \alpha$};
\draw (3.5,-0.75) node {$\scs \cdots$};
\draw (4.3,-0.75) node {$\scs \alpha$};
\draw (5.3,-0.75) node {$\scs \alpha$};
\draw (2.2,-2.5) node {$\scs \psi_1$};
\draw (3.3,-4) node {$\scs \psi_{n-2}$};
\draw (4.25,-5.25) node {$\scs \psi_{n-1}$};
\draw (2.2,2.5) node {$\scs \gamma_1$};
\draw (3.3,4) node {$\scs \gamma_{n-2}$};
\draw (4.25,5.25) node {$\scs \gamma_{n-1}$};
\end{tikzpicture} }
\end{equation}
to apply a mirror reflection (Lemma~\ref{lemma:fusion-channel}) and attain that $\gamma_{i}=\psi_i$ for $i = n-1, n, \dots, 2n-2$.

Next, we isotope the bottom piece to obtain the following identity
\begin{equation}
\begin{split}
\hackcenter{\begin{tikzpicture}[scale=0.8]
\begin{scope}[decoration={markings, mark=at position 0.9 with {\arrow{>}}}]
    \draw[ultra thick, black, postaction=decorate] (6.5,2.5) to (6.5,1.5);
    \draw[ultra thick, black] (0.8125,-3.75) to (0,-4.5);
    \draw[ultra thick, black, postaction=decorate] (0.8125,-3.75) to (1,-4.5);
    \draw[ultra thick, black, postaction=decorate] (1.625,-3) to (0.8125,-3.75);
    \draw[ultra thick, black, postaction=decorate] (1.625,-3) to (2,-4.5);
    \draw[ultra thick, black, dotted] (2.4375,-2.25) to (1.625,-3);
    \draw[ultra thick, black, postaction=decorate] (2.4375,-2.25) to (3,-4.5);
    \draw[ultra thick, black, postaction=decorate] (3.25,-1.5) to (2.4375,-2.25);
    \draw[ultra thick, black, postaction=decorate] (3.25,-1.5) to (4,-4.5);
    \draw[ultra thick, black, postaction=decorate] (4.0625,-0.75) to (3.25,-1.5);
    \draw[ultra thick, black, postaction=decorate] (4.0625,-0.75) to (5,-4.5);
    \draw[ultra thick, black, postaction=decorate] (4.875,0) to (4.0625,-0.75);
    \draw[ultra thick, black, postaction=decorate] (4.875,0) to (6,-4.5);
    \draw[ultra thick, black, dotted] (5.6875,.75) to (4.875,0);
    \draw[ultra thick, black, postaction=decorate] (5.6875,0.75) to (7,-4.5);
    \draw[ultra thick, black, postaction=decorate] (6.5,1.5) to (5.6875,0.75);
    \draw[ultra thick, black, postaction=decorate] (6.5,1.5) to (8,-4.5);
    \draw[ultra thick, black] (1,-4.5) to [out=-90,in=-90] (8,-4.5);
    \draw[ultra thick, black] (2,-4.5) to [out=-90,in=-90] (7,-4.5);
    \draw[ultra thick, black] (3,-4.5) to [out=-90,in=-90] (6,-4.5);
    \draw[ultra thick, black] (4,-4.5) to [out=-90,in=-90] (5,-4.5);
    \draw[ultra thick, black, postaction=decorate] (0,-4.5) to (0,-7);
    \node at (4.5,-4.8) [circle,fill,inner sep=2pt]{};
    \node at (4.5,-5.35) [circle,fill,inner sep=2pt]{};
    \node at (4.5,-5.95) [circle,fill,inner sep=2pt]{};
    \node at (4.5,-6.55) [circle,fill,inner sep=2pt]{};
\end{scope}
    \node at (6.5,2.7) {$\gamma_{2n-2}$};
    \node at (5.7,1.5) {$\gamma_{2n-3}$};
    \node at (4.3,0) {$\gamma_{n}$};
    \node at (3.4,-0.7) {$\gamma_{n-1}$};
    \node at (2.5,-1.4) {$\gamma_{n-2}$};
    \node at (1.2,-3) {$\gamma_{1}$};
    \node at (0.7,-4.8) {$\alpha$};
    \node at (1.7,-4.8) {$\alpha$};
    \node at (2.7,-4.8) {$\alpha$};
    \node at (3.7,-4.8) {$\alpha$};
    \node at (5.2,-4.8) {$-\alpha$};
    \node at (6.3,-4.8) {$-\alpha$};
    \node at (7.4,-4.8) {$-\alpha$};
    \node at (8.4,-4.8) {$-\alpha$};
    \node at (0,-7.2) {$\alpha$};
    \node at (2.3,-3.5) {$\scs \cdots$};
    \node at (6.3,-3.5) {$\scs \cdots$};
\end{tikzpicture} }
&~=~\hackcenter{\begin{tikzpicture}[scale=0.9]
\begin{scope}[decoration={markings, mark=at position 0.6 with {\arrow{>}}}]
  \draw[ultra thick, black, postaction=decorate] (0,0) to (0.6,-0.6);
  \draw[ultra thick, black, postaction=decorate] (0.6,-0.6) to (1.2,-1.2);
  \draw[ultra thick, dotted] (1.2,-1.2) to (1.8,-1.8);
  \draw[ultra thick, black, postaction=decorate] (1.8,-1.8) to (2.4,-2.4);
  \draw[ultra thick, black, postaction=decorate] (.6,-.6) to (1.2,0);
  \draw[ultra thick, black, postaction=decorate] (1.2,-1.2) to (2.4,0);
  \draw[ultra thick, black, postaction=decorate] (1.8,-1.8) to (3.6,0);
  \draw[ultra thick, black, postaction=decorate] (2.4,-2.4) to (4.8,0);
  \draw[ultra thick, black, postaction=decorate] (2.4,-2.4) to (2.4,-3.3);
  \draw[ultra thick, black] (0.6,0.6) to (0,0);
  \draw[ultra thick, black, postaction=decorate] (1.2,1.2) to (0.6,0.6);
  \draw[ultra thick, dotted] (1.8,1.8) to (1.2,1.2);
  \draw[ultra thick, black, postaction=decorate] (2.4,2.4) to (1.8,1.8);
  \draw[ultra thick, black, postaction=decorate] (.6,.6) to (1.2,0);
  \draw[ultra thick, black, postaction=decorate] (1.2,1.2) to (2.4,0);
  \draw[ultra thick, black, postaction=decorate] (1.8,1.8) to (3.6,0);
  \draw[ultra thick, black, postaction=decorate] (2.4,2.4) to (4.8,0);
  \draw[ultra thick, black, postaction=decorate] (2.4,3.3) to (2.4,2.4);
  \node at (1.2,0) [circle,fill,inner sep=2pt]{};
  \node at (2.4,0) [circle,fill,inner sep=2pt]{};
  \node at (3.6,0) [circle,fill,inner sep=2pt]{};
  \node at (4.8,0) [circle,fill,inner sep=2pt]{};
\end{scope}
   \node at (-0.5,0) {$\gamma_{n-1}$};
   \node at (1.2,-0.5) {$\alpha$};
   \node at (2.3,-0.5) {$\alpha$};
   \node at (2.9,0) {$\cdots$};
   \node at (3.5,-0.5) {$\alpha$};
   \node at (4.8,-0.5) {$\alpha$};
   \node at (1.2,0.5) {$-\alpha$};
   \node at (2.3,0.5) {$-\alpha$};
   \node at (3.5,0.5) {$-\alpha$};
   \node at (4.8,0.5) {$-\alpha$};
   \node at (.6,-1.3) {$\gamma_{n-2}$};
   \node at (1.7,-2.4) {$\gamma_1$};
   \node at (2.4,-3.5) {$\alpha$};
   \node at (.6,1.3) {$\gamma_{n}$};
   \node at (1.7,2.5) {$\gamma_{2n-3}$};
   \node at (2.4,3.5) {$\gamma_{2n-2}$};
\end{tikzpicture} }\\
&~=~ \frac{\delta_{\alpha\gamma_{2n-2}} \delta_{\gamma_{1}\gamma_{2n-3}}\cdots \delta_{\gamma_{n-2}\gamma_{n}}}{\md(\alpha)\md(\gamma_1) \cdots \md (\gamma_{n-2})}.
\end{split}
\end{equation}
Notice that the top piece admits an analogous identity as above.

Putting these pieces together and rearranging the finite sum gives us that Equation~\eqref{eq:special-identification} is equal to
\begin{equation} \label{eq:special-alpha}
\begin{split}
\sum\limits_{\substack{\gamma_1 \in 2\alpha + H_r \\ \gamma_2 \in \gamma_1 + \alpha + H_r \\ \vdots \\ \gamma_{n-1} \in \gamma_{n-2} + \alpha + H_r}} 
\md(\gamma_1)^2 \cdots \md(\gamma_{n-1})^2
\hackcenter{\begin{tikzpicture}[baseline=0, thick, scale=0.5, shift={(0,0)}]
\draw (0.5,.5) rectangle (5.5,-.5);
\draw (3,0) node {$\scs \beta_n$};
\begin{scope}[shift={(1,-5.5)}]
\begin{scope}[decoration={markings, mark=at position 0.6 with {\arrow{>}}}]
    \draw[ultra thick, black, postaction=decorate] (0,4.5) to (0.8125,3.75);
    \draw[ultra thick, black, postaction=decorate] (1,4.5) to (0.8125,3.75);
    \draw[ultra thick, black, postaction=decorate] (0.8125,3.75) to (1.625,3);
    \draw[ultra thick, black, postaction=decorate] (2,4.5) to (1.625,3);
    \draw[ultra thick, black, dotted] (1.625, 3) to (2.4375, 2.25);
    \draw[ultra thick, black, postaction=decorate] (3, 4.5) to (2.4375, 2.25);
    \draw[ultra thick, black, postaction=decorate] (2.4375,2.25) to (3.25,1.5);
    \draw[ultra thick, black, postaction=decorate] (4, 4.5) to (3.25,1.5);
    \draw[ultra thick, black, postaction=decorate] (5, 4.5) to (4.0625,0.75);
    \draw[ultra thick, black, postaction=decorate] (3.25,1.5) to (4.0625,0.75);
    \draw[ultra thick, black, postaction=decorate] (6, 4.5) to (4.875,0);
    \draw[ultra thick, black, postaction=decorate] (4.0625,0.75) to (4.875,0);
    \draw[ultra thick, black, postaction=decorate] (7, 4.5) to (5.6875,-0.75);
    \draw[ultra thick, black, dotted] (4.875,0) to (5.6875,-0.75);
    \draw[ultra thick, black, postaction=decorate] (8, 4.5) to (6.5,-1.5);
    \draw[ultra thick, black, postaction=decorate] (5.6875,-0.75) to (6.5,-1.5);
    \draw[ultra thick, black, postaction=decorate] (6.5,-1.5) to (6.5,-2.5);
\end{scope}
\end{scope}
\begin{scope}[shift={(1,5.5)}]
\begin{scope}[decoration={markings, mark=at position 0.9 with {\arrow{>}}}]
    \draw[ultra thick, black, postaction=decorate] (6.5,2.5) to (6.5,1.5);
    \draw[ultra thick, black, postaction=decorate] (0.8125,-3.75) to (0,-4.5);
    \draw[ultra thick, black, postaction=decorate] (0.8125,-3.75) to (1,-4.5);
    \draw[ultra thick, black, postaction=decorate] (1.625,-3) to (0.8125,-3.75);
    \draw[ultra thick, black, postaction=decorate] (1.625,-3) to (2,-4.5);
    \draw[ultra thick, black, dotted] (2.4375,-2.25) to (1.625,-3);
    \draw[ultra thick, black, postaction=decorate] (2.4375,-2.25) to (3,-4.5);
    \draw[ultra thick, black, postaction=decorate] (3.25,-1.5) to (2.4375,-2.25);
    \draw[ultra thick, black, postaction=decorate] (3.25,-1.5) to (4,-4.5);
    \draw[ultra thick, black, postaction=decorate] (4.0625,-0.75) to (3.25,-1.5);
    \draw[ultra thick, black, postaction=decorate] (4.0625,-0.75) to (5,-4.5);
    \draw[ultra thick, black, postaction=decorate] (4.875,0) to (4.0625,-0.75);
    \draw[ultra thick, black, postaction=decorate] (4.875,0) to (6,-4.5);
    \draw[ultra thick, black, dotted] (5.6875,.75) to (4.875,0);
    \draw[ultra thick, black, postaction=decorate] (5.6875,0.75) to (7,-4.5);
    \draw[ultra thick, black, postaction=decorate] (6.5,1.5) to (5.6875,0.75);
    \draw[ultra thick, black, postaction=decorate] (6.5,1.5) to (8,-4.5);
\end{scope}
\end{scope}
\draw[ultra thick, black] (1,0.5) to (1,1);
\draw[ultra thick, black] (2,0.5) to (2,1);
\draw[ultra thick, black] (3,0.5) to (3,1);
\draw[ultra thick, black] (4,0.5) to (4,1);
\draw[ultra thick, black] (5,0.5) to (5,1);
\draw[ultra thick, black] (1,-0.5) to (1,-1);
\draw[ultra thick, black] (2,-0.5) to (2,-1);
\draw[ultra thick, black] (3,-0.5) to (3,-1);
\draw[ultra thick, black] (4,-0.5) to (4,-1);
\draw[ultra thick, black] (5,-0.5) to (5,-1);
\draw[ultra thick, black] (6,1) to (6,-1);
\draw[ultra thick, black] (7,1) to (7,-1);
\draw[ultra thick, black] (8,1) to (8,-1);
\draw[ultra thick, black] (9,1) to (9,-1);
\draw (0.7,0.75) node {$\scs \alpha$};
\draw (1.7,0.75) node {$\scs \alpha$};
\draw (2.7,0.75) node {$\scs \alpha$};
\draw (3.5,0.75) node {$\scs \cdots$};
\draw (4.3,0.75) node {$\scs \alpha$};
\draw (5.3,0.75) node {$\scs \alpha$};
\draw (6.5,0) node {$\scs -\alpha$};
\draw (7.5,0) node {$\scs \cdots$};
\draw (8.5,0) node {$\scs -\alpha$};
\draw (9.5,0) node {$\scs -\alpha$};
\draw (0.7,-0.75) node {$\scs \alpha$};
\draw (1.7,-0.75) node {$\scs \alpha$};
\draw (2.7,-0.75) node {$\scs \alpha$};
\draw (3.5,-0.75) node {$\scs \cdots$};
\draw (4.3,-0.75) node {$\scs \alpha$};
\draw (5.3,-0.75) node {$\scs \alpha$};
\draw (2.2,-2.5) node {$\scs \gamma_1$};
\draw (3.3,-4) node {$\scs \gamma_{n-2}$};
\draw (4.25,-4.75) node {$\scs \gamma_{n-1}$};
\draw (5.2,-5.6) node {$\scs \gamma_{n-2}$};
\draw (6.5,-7) node {$\scs \gamma_{1}$};
\draw (7.5,-8.2) node {$\scs \alpha$};
\draw (2.2,2.5) node {$\scs \gamma_1$};
\draw (3.3,4) node {$\scs \gamma_{n-2}$};
\draw (4.25,4.75) node {$\scs \gamma_{n-1}$};
\draw (5.2,5.6) node {$\scs \gamma_{n-2}$};
\draw (6.5,7) node {$\scs \gamma_{1}$};
\draw (7.5,8.2) node {$\scs \alpha$};
\end{tikzpicture} }\\
~=~
\sum\limits_{\gamma_{n-1}\in n\alpha + H_{(n-1)(r-1)+1}}\frac{\md(\gamma_{n-1})}{\md(\alpha)}\chi_{\rho_{\gamma_{n-1}}}(\beta_n) \id_{V_\alpha}.
\end{split}
\end{equation}
\end{proof}

These two proofs of Theorem~\ref{thm:CGP-formula} reflect the pros and cons between $\alpha$-fusion trees and special $\alpha$-fusion trees. First, observe that $\alpha$-fusion trees appear as a graphical subcomponent of special $\alpha$-fusion trees. While the special $\alpha$-fusion trees lives in a single $\Hom$-space $\mathcal{H}^s := \Hom(V_\alpha, V_\alpha^{\otimes{n}} \otimes V_{-\alpha}^{\otimes(n-1)})$, the collection of special $\alpha$-fusion trees do not form a basis of $\mathcal{H}^s$ nor forms a $\Br_n$-invariant subspace. In contrast, $\alpha$-fusion trees work over a collection of $\Hom$-spaces: $\mathcal{H}_{n,m,\alpha}^r$ for $m=0, \dots, (n-1)(r-1)$ and makes full use of every basis element. However, if $m\neq m'$ with $f \in \mathcal{H}_{n,m,\alpha}^r$ and $g\in \mathcal{H}_{n,m',\alpha}^r$, then $f^\dagger g = 0$ so it is not appropriate to perform measurements between $\alpha$-fusion trees of $\mathcal{H}_{n,m,\alpha}^r$ and $\mathcal{H}_{n,m',\alpha}^r$. An analogy is that the collection of $\alpha$-fusion tree $\Hom$-spaces $\mathcal{H}_{n,m,\alpha}^r$ can be amalgamated into a neighborhood residing in the larger territory $\mathcal{H}^s$ where we can focus on just a single non-degenerate Hermitian form. Consequently, $\mathcal{H}^s$ provides a much more natural setting to derive a formula of $N_r^\alpha(K)$ in the language of Hermitian pairings of fusion trees.

\begin{theorem} \label{thm:special-pairing}
Let $K$ be a knot represented as a closure of an $n$-braid $\beta_n$. Define
\begin{equation*}
\mathcal{Y}:=\sum\limits_{\substack{\gamma_1 \in 2\alpha + H_r \\ \gamma_2 \in \gamma_1 + \alpha + H_r \\ \vdots \\ \gamma_{n-1} \in \gamma_{n-2} + \alpha + H_r \\ \mathbf{p}:=(\alpha,\gamma_1, \dots, \gamma_{n-1})}} \sqrt{\frac{\md(\gamma_{n-1})}{\langle Y^s_{\mathbf{p}}, Y^s_{\mathbf{p}} \rangle}}
\hackcenter{\begin{tikzpicture}[baseline=0, thick, scale=0.5, shift={(0,0)}]
\begin{scope}[shift={(1,-5.5)}]
\begin{scope}[decoration={markings, mark=at position 0.6 with {\arrow{>}}}]
    \draw[ultra thick, black, postaction=decorate] (0,4.5) to (0.8125,3.75);
    \draw[ultra thick, black, postaction=decorate] (1,4.5) to (0.8125,3.75);
    \draw[ultra thick, black, postaction=decorate] (0.8125,3.75) to (1.625,3);
    \draw[ultra thick, black, postaction=decorate] (2,4.5) to (1.625,3);
    \draw[ultra thick, black, dotted] (1.625, 3) to (2.4375, 2.25);
    \draw[ultra thick, black, postaction=decorate] (3, 4.5) to (2.4375, 2.25);
    \draw[ultra thick, black, postaction=decorate] (2.4375,2.25) to (3.25,1.5);
    \draw[ultra thick, black, postaction=decorate] (4, 4.5) to (3.25,1.5);
    \draw[ultra thick, black, postaction=decorate] (5, 4.5) to (4.0625,0.75);
    \draw[ultra thick, black, postaction=decorate] (3.25,1.5) to (4.0625,0.75);
    \draw[ultra thick, black, postaction=decorate] (6, 4.5) to (4.875,0);
    \draw[ultra thick, black, postaction=decorate] (4.0625,0.75) to (4.875,0);
    \draw[ultra thick, black, postaction=decorate] (7, 4.5) to (5.6875,-0.75);
    \draw[ultra thick, black, dotted] (4.875,0) to (5.6875,-0.75);
    \draw[ultra thick, black, postaction=decorate] (8, 4.5) to (6.5,-1.5);
    \draw[ultra thick, black, postaction=decorate] (5.6875,-0.75) to (6.5,-1.5);
    \draw[ultra thick, black, postaction=decorate] (6.5,-1.5) to (6.5,-2.5);
\end{scope}
\end{scope}
\draw (1,-0.8) node {$\scs \alpha$};
\draw (2,-0.8) node {$\scs \alpha$};
\draw (3,-0.8) node {$\scs \alpha$};
\draw (3.5,-1.25) node {$\scs \cdots$};
\draw (4,-0.8) node {$\scs \alpha$};
\draw (5,-0.8) node {$\scs \alpha$};
\draw (6,-0.8) node {$\scs -\alpha$};
\draw (7,-0.8) node {$\scs -\alpha$};
\draw (7.5,-1.25) node {$\scs \cdots$};
\draw (8,-0.8) node {$\scs -\alpha$};
\draw (9,-0.8) node {$\scs -\alpha$};
\draw (2.2,-2.5) node {$\scs \gamma_1$};
\draw (3.3,-4) node {$\scs \gamma_{n-2}$};
\draw (4.25,-4.75) node {$\scs \gamma_{n-1}$};
\draw (5.2,-5.6) node {$\scs \gamma_{n-2}$};
\draw (6.5,-7) node {$\scs \gamma_{1}$};
\draw (7.5,-8.2) node {$\scs \alpha$};
\end{tikzpicture} } . 
\end{equation*}
Then, we have the following formula:
\begin{equation} \label{eq:pairing}
N_r^\alpha(K) = \langle \mathcal{Y}, (\beta_n \cup \mathbb{I}_{n-1})\mathcal{Y} \rangle
\end{equation}
where $\mathbb{I}_{n-1}$ is the identity braid word on $(n-1)$-strands.
\end{theorem}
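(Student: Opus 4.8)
The plan is to expand the right-hand side of~\eqref{eq:pairing} sesquilinearly over the paths indexing $\mathcal{Y}$ and to reduce the entire computation to the \emph{diagonal} matrix elements of the fusion-tree representation $\rho_{\gamma_{n-1}}$, after which Theorem~\ref{thm:CGP-formula} finishes the argument. Writing $N_{\mathbf{p}} := \sqrt{\md(\gamma_{n-1})/\langle Y^s_{\mathbf{p}}, Y^s_{\mathbf{p}}\rangle}$ and using that the pairing is conjugate-linear in its first slot, I would record
\[
\langle \mathcal{Y}, (\beta_n \cup \mathbb{I}_{n-1})\mathcal{Y}\rangle = \sum_{\mathbf{p}, \mathbf{p}'} \overline{N_{\mathbf{p}}}\, N_{\mathbf{p}'}\, \langle Y^s_{\mathbf{p}}, (\beta_n \cup \mathbb{I}_{n-1}) Y^s_{\mathbf{p}'}\rangle .
\]
The crux is then the single identity
\[
\langle Y^s_{\mathbf{p}}, (\beta_n \cup \mathbb{I}_{n-1}) Y^s_{\mathbf{p}'}\rangle = \delta_{\mathbf{p}, \mathbf{p}'}\, [\rho_{\gamma_{n-1}}(\beta_n)]_{\mathbf{p},\mathbf{p}}\, \langle Y^s_{\mathbf{p}}, Y^s_{\mathbf{p}}\rangle ,
\]
where $[\rho_{\gamma_{n-1}}(\beta_n)]_{\mathbf{p},\mathbf{p}}$ is the diagonal entry in the $\alpha$-fusion-tree basis.

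To establish this, I would first factor each special $\alpha$-fusion tree as $Y^s_{\mathbf{p}} = (Y_{\mathbf{p}} \otimes \id_{V_{-\alpha}^{\otimes(n-1)}}) \circ T_{\mathbf{p}}$, where $Y_{\mathbf{p}} \in \Hom(V_{\gamma_{n-1}}, V_\alpha^{\otimes n})$ is the underlying $\alpha$-fusion tree and $T_{\mathbf{p}} \in \Hom(V_\alpha, V_{\gamma_{n-1}} \otimes V_{-\alpha}^{\otimes(n-1)})$ is the ``return'' tree splitting off the $-\alpha$ strands. Since $\beta_n \cup \mathbb{I}_{n-1}$ acts as the braiding operator $B$ associated to $\beta_n$ on the $V_\alpha^{\otimes n}$ factor and as the identity on the $V_{-\alpha}$ factors, the composite $(Y^s_{\mathbf{p}})^\dagger (\beta_n \cup \mathbb{I}_{n-1}) Y^s_{\mathbf{p}'}$ unravels to $T_{\mathbf{p}}^\dagger\big((Y_{\mathbf{p}}^\dagger B\, Y_{\mathbf{p}'}) \otimes \id_{V_{-\alpha}^{\otimes(n-1)}}\big) T_{\mathbf{p}'}$. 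Simplicity of $V_{\gamma_{n-1}}$ forces the endpoints to agree, $\gamma_{n-1}=\gamma'_{n-1}=:\gamma$, and then expanding $B\,Y_{\mathbf{p}'}$ in the fusion-tree basis and invoking the orthogonality of $\alpha$-fusion trees from Lemma~\ref{lemma:pairing} yields $Y_{\mathbf{p}}^\dagger B\, Y_{\mathbf{p}'} = [\rho_\gamma(\beta_n)]_{\mathbf{p},\mathbf{p}'}\, h_{\mathbf{p}}\, \id_{V_\gamma}$, where $h_{\mathbf{p}}\id_{V_\gamma}=Y_{\mathbf{p}}^\dagger Y_{\mathbf{p}}$.

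It then remains to contract the $-\alpha$ return parts. Here I would apply the Mirror Reflection lemma (Lemma~\ref{lemma:fusion-channel}) together with the bubble computation of Lemma~\ref{lemma:isotoped-bubble} to $\mt_{V_\alpha}(T_{\mathbf{p}}^\dagger T_{\mathbf{p}'})$: a nonzero diagram forces every intermediate label of $T_{\mathbf{p}}$ and $T_{\mathbf{p}'}$ to coincide, so this contraction equals $\delta_{\mathbf{p},\mathbf{p}'}\,t_{\mathbf{p}}$ for a scalar $t_{\mathbf{p}}$. Combining the two halves gives the displayed factorization, and the same computation with $B$ replaced by the identity shows $\langle Y^s_{\mathbf{p}}, Y^s_{\mathbf{p}}\rangle = h_{\mathbf{p}} t_{\mathbf{p}}$; thus $h_{\mathbf{p}}$ and $t_{\mathbf{p}}$ are never evaluated explicitly, as they cancel. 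Using that these pairings are real (the Remark following Lemma~\ref{lemma:pairing}), the normalization is arranged precisely so that $\overline{N_{\mathbf{p}}}N_{\mathbf{p}}\langle Y^s_{\mathbf{p}}, Y^s_{\mathbf{p}}\rangle = \md(\gamma_{n-1})$, whence
\[
\langle \mathcal{Y}, (\beta_n \cup \mathbb{I}_{n-1})\mathcal{Y}\rangle = \sum_{\mathbf{p}} \md(\gamma_{n-1})\, [\rho_{\gamma_{n-1}}(\beta_n)]_{\mathbf{p},\mathbf{p}} = \sum_{\gamma} \md(\gamma)\, \chi_{\rho_\gamma}(\beta_n) = N_r^\alpha(K),
\]
the last two equalities grouping paths by their endpoint and invoking Theorem~\ref{thm:CGP-formula}.

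The step I expect to be the main obstacle is the diagrammatic justification that the braid genuinely decouples from the $-\alpha$ strands and that the surviving scalar on the $\alpha$-half is exactly the diagonal matrix element $[\rho_\gamma(\beta_n)]_{\mathbf{p},\mathbf{p}}$ in the correct normalization; this is in essence the bookkeeping already performed in the second proof of Theorem~\ref{thm:CGP-formula} (Equation~\eqref{eq:special-alpha}), and the present argument repackages that computation divided through by the self-pairings. A secondary point I would verify is that $\langle Y^s_{\mathbf{p}}, Y^s_{\mathbf{p}}\rangle \neq 0$ so that $N_{\mathbf{p}}$ is well defined, which follows since $h_{\mathbf{p}}$ and $t_{\mathbf{p}}$ are products of nonzero factors $\md(\gamma_i)^{\pm 1}$ at generic $\gamma_i$, together with the sign-compatibility needed to extract the square root, guaranteed by the real Hermitian structure of $\mathcal{D}^\dagger$.
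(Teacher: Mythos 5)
Your proposal is correct and takes essentially the same approach as the paper's proof: there, too, the pairing $\langle \mathcal{Y}, (\beta_n \cup \mathbb{I}_{n-1})\mathcal{Y}\rangle$ is reduced to its diagonal terms — orthogonality being forced by the mirrored return channels, which the braid does not touch (Lemma~\ref{lemma:pairing}) — and the resulting sum $\sum_{\mathbf{p}}\md(\gamma_{n-1})\,[\rho_{\gamma_{n-1}}(\beta_n)]_{\mathbf{p},\mathbf{p}}$ is then matched with $N_r^\alpha(K)$ via the special-fusion-tree computation underlying Theorem~\ref{thm:CGP-formula} (Equation~\eqref{eq:special-alpha}). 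Your factorization $Y^s_{\mathbf{p}}=(Y_{\mathbf{p}}\otimes\id)\circ T_{\mathbf{p}}$ merely makes explicit the ``quick calculation'' the paper leaves implicit, and the square-root sign point you flag (needing $\overline{N_{\mathbf{p}}}\,N_{\mathbf{p}}=N_{\mathbf{p}}^2$, i.e.\ $\md(\gamma_{n-1})/\langle Y^s_{\mathbf{p}},Y^s_{\mathbf{p}}\rangle\geq 0$) is passed over equally silently in the paper's Equation~\eqref{eq:orthogonal-pairing}.
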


\begin{proof}
First, observe that the fusion channels
\begin{equation*}
(\alpha, \gamma_1, \dots, \gamma_{n-1}, \dots, \gamma_{1}, \alpha)
\end{equation*}
in the special $\alpha$-fusion tree reflects across the $\gamma_{n-1}$ term. Therefore, despite the $(\beta_n \cup \mathbb{I}_{n-1})$ action, the reflection of the fusion channels informs us that each summand is pairwise orthogonal with respect to the Hermitian pairing by Lemma~\ref{lemma:pairing}. That is,
\begin{equation} \label{eq:orthogonal-pairing} 
\langle \mathcal{Y}, (\beta_n \cup \mathbb{I}_{n-1})\mathcal{Y} \rangle = \sum\limits_{\substack{\gamma_1 \in 2\alpha + H_r \\ \gamma_2 \in \gamma_1 + \alpha + H_r \\ \vdots \\ \gamma_{n-1} \in \gamma_{n-2} + \alpha + H_r \\ \mathbf{p}:=(\alpha,\gamma_1, \dots, \gamma_{n-1})}} \frac{\md(\gamma_{n-1})}{\langle Y^s_{\mathbf{p}}, Y^s_{\mathbf{p}} \rangle} \langle Y^s_{\mathbf{p}}, (\beta_n \cup \mathbb{I}_{n-1})Y^s_{\mathbf{p}} \rangle.
\end{equation}

Given this setup, a quick calculation shows that Equation~\eqref{eq:orthogonal-pairing} agrees with the definition of $N_r^\alpha(K)$ given in Equation~\eqref{eq:definition} where the morphism inside $\langle-\rangle$ is equal to Equation~\eqref{eq:special-alpha}.
\end{proof}

\begin{corollary} \label{cor:special-pairing}
Assume the same conditions as Theorem~\ref{thm:special-pairing}. Then,
\begin{equation}
\operatorname{ADO}_r(K)|_{x=q^{2\alpha - 2}}=\frac{q^{-\frac{\alpha^2 - (r-1)^2}{2} writhe(\beta_n)}}{\md(\alpha)}\langle \mathcal{Y}, (\beta_n \cup \mathbb{I}_{n-1})\mathcal{Y} \rangle.
\end{equation}
In particular, when $r=2$, we have 
\begin{equation}
\Delta_{K}(q^{2\alpha - 2}) = \frac{q^{-\frac{\alpha^2 - 1}{2} writhe(\beta_n)}}{\md(\alpha)}\langle \mathcal{Y}, (\beta_n \cup \mathbb{I}_{n-1})\mathcal{Y} \rangle
\end{equation}
where $\Delta_K(x)=\operatorname{ADO}_2(K)$ is the Alexander polynomial of $K$.
\end{corollary}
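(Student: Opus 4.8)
The plan is to obtain this Corollary as an immediate consequence of Theorem~\ref{thm:special-pairing} combined with the framing-correction identity recorded in the Remark preceding it. That identity, due to Murakami's reconstruction of the colored Alexander invariant~\cite{Mur08} together with the fact that the renormalized invariant $N_r^\alpha$ recovers it~\cite{GPT09}, reads
\begin{equation*}
q^{-\frac{\alpha^2-(r-1)^2}{2}writhe(\beta_n)}N_r^{\alpha}(K)=\md(\alpha)\operatorname{ADO}_r(K)|_{x=q^{2\alpha-2}}.
\end{equation*}
Since we work with generic $\alpha\in(\mathbb{R}\setminus\mathbb{Z})\cup r\mathbb{Z}$, the modified dimension $\md(\alpha)$ of Equation~\eqref{eq:md} is a nonzero real number, so I may divide by it. Solving for the colored Alexander invariant yields
\begin{equation*}
\operatorname{ADO}_r(K)|_{x=q^{2\alpha-2}}=\frac{q^{-\frac{\alpha^2-(r-1)^2}{2}writhe(\beta_n)}}{\md(\alpha)}N_r^{\alpha}(K).
\end{equation*}

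The final step is to substitute the Hermitian-pairing expression for $N_r^\alpha(K)$ furnished by Theorem~\ref{thm:special-pairing}, namely $N_r^\alpha(K)=\langle\mathcal{Y},(\beta_n\cup\mathbb{I}_{n-1})\mathcal{Y}\rangle$. This substitution produces the first displayed formula of the Corollary verbatim. For the specialization I would set $r=2$, so that $(r-1)^2=1$ and the framing exponent becomes $-\frac{\alpha^2-1}{2}writhe(\beta_n)$, and then invoke the classical identification $\Delta_K(x)=\operatorname{ADO}_2(K)$ of the second ADO invariant with the Alexander polynomial (already cited in the earlier Corollary following Theorem~\ref{thm:CGP-formula}). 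Evaluating at $x=q^{2\alpha-2}$ then gives the stated Alexander-polynomial formula.

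Because both ingredients are already established, there is no genuine analytic or combinatorial obstacle here; the content is a bookkeeping assembly of two known equalities. The only point requiring care is that the hypotheses align: both the framing-correction identity and Theorem~\ref{thm:special-pairing} presuppose that $K$ is the closure of the $n$-braid $\beta_n$ and that $\alpha$ is generic (so that $\md(\alpha)\neq 0$ and the Hermitian category $\mathcal{D}^\dagger$ applies). I would state these hypotheses once at the outset, confirm they coincide with those of the Corollary, and then let the two displayed equations follow by the substitution described above.
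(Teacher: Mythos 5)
Your proposal is correct and takes essentially the same route as the paper: Corollary~\ref{cor:special-pairing} is stated there without a separate proof precisely because it is the bookkeeping substitution you describe, combining the framing-correction identity $q^{-\frac{\alpha^2-(r-1)^2}{2}writhe(\beta_n)}N_r^{\alpha}(K)=\md(\alpha)\operatorname{ADO}_r(K)|_{x=q^{2\alpha-2}}$ from the remark with the pairing formula $N_r^\alpha(K)=\langle\mathcal{Y},(\beta_n\cup\mathbb{I}_{n-1})\mathcal{Y}\rangle$ of Theorem~\ref{thm:special-pairing}, then setting $r=2$ and using $\Delta_K(x)=\operatorname{ADO}_2(K)$.
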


\begin{theorem}
The formulas provided in Theorem~\ref{thm:special-pairing} and Corollary~\ref{cor:special-pairing} are independent of the choice of trivalent morphism conventions of $Y^{\alpha,\beta}_{\gamma}$ (see Equation~\eqref{eq:conventions}).
\end{theorem}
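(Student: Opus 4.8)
The plan is to combine one conceptual observation with one explicit bookkeeping computation. Conceptually, Theorem~\ref{thm:special-pairing} already identifies the right-hand side of \eqref{eq:pairing} with $N_r^\alpha(K)=\mt(K_\alpha)$, and the renormalized invariant in \eqref{eq:definition} is assembled only from the braiding, the duality morphisms, and the modified trace $\mt$ of the Hermitian ribbon category $\mathcal{D}^\dagger$. None of these data single out a normalization of the one-dimensional spaces $\Hom(V_\gamma,V_\alpha\otimes V_\beta)$, so $N_r^\alpha(K)$ is a categorical invariant that cannot see the choice of $Y^{\alpha,\beta}_\gamma$; the same holds for $\operatorname{ADO}_r(K)|_{x=q^{2\alpha-2}}$ in Corollary~\ref{cor:special-pairing}, whose extra prefactors (the framing correction and $\md(\alpha)^{-1}$) are themselves convention-free. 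This already settles the statement in principle, and the rest of the proof is devoted to exhibiting the cancellation explicitly.

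To make this self-contained I would verify directly that the explicit normalization inside $\mathcal{Y}$ does not spoil invariance. Because each relevant $\Hom$-space is one-dimensional over generic simples, any admissible convention replaces $Y^{\alpha,\beta}_\gamma$ by $\lambda^{\alpha,\beta}_\gamma Y^{\alpha,\beta}_\gamma$ with $\lambda^{\alpha,\beta}_\gamma\in\C^\times$, while the isomorphisms $w_\alpha$ enter only through the pairs $w_\alpha^{-1}\circ w_\alpha=\id_{V_\alpha}$, so their rescalings are automatically inert. A special $\alpha$-fusion tree $Y^s_{\mathbf{p}}$ is a composite of splitting morphisms, hence rescales by a single scalar $\Lambda_{\mathbf{p}}$ equal to the product of the $\lambda^{\cdot,\cdot}_\cdot$ over the trivalent vertices of the tree. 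Sesquilinearity of $\langle f,g\rangle=\mt(f^\dagger g)$ forces $\langle Y^s_{\mathbf{p}},Y^s_{\mathbf{p}}\rangle$ to scale by $|\Lambda_{\mathbf{p}}|^2$, so the coefficient $a_{\mathbf{p}}:=\sqrt{\md(\gamma_{n-1})/\langle Y^s_{\mathbf{p}},Y^s_{\mathbf{p}}\rangle}$ scales by $|\Lambda_{\mathbf{p}}|^{-1}$ and the $\mathbf{p}$-th summand of $\mathcal{Y}$ acquires only the unimodular phase $\Lambda_{\mathbf{p}}/|\Lambda_{\mathbf{p}}|$. Substituting into the orthogonal expansion \eqref{eq:orthogonal-pairing}, the left and right entries of the $\mathbf{p}$-th diagonal term contribute $\overline{\Lambda_{\mathbf{p}}/|\Lambda_{\mathbf{p}}|}$ and $\Lambda_{\mathbf{p}}/|\Lambda_{\mathbf{p}}|$, whose product is $1$, so every diagonal term is unchanged.

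The hard part will be justifying the two points on which this cancellation silently rests, rather than the phase count itself. First, the diagonalization \eqref{eq:orthogonal-pairing} must persist in the new convention; here I would invoke that Lemma~\ref{lemma:pairing} is stated for arbitrary trivalent conventions — the dependence is absorbed entirely into $C^{\alpha_1,\dots,\alpha_n}_{\gamma_1,\dots,\gamma_{n-1}}$ — whereas the actual vanishing of the off-diagonal pairings is driven only by the reflection symmetry of the fusion channels $(\alpha,\gamma_1,\dots,\gamma_{n-1},\dots,\gamma_1,\alpha)$ together with Schur's lemma, both convention-insensitive. Second, the square root defining $a_{\mathbf{p}}$ is multivalued and $\langle Y^s_{\mathbf{p}},Y^s_{\mathbf{p}}\rangle$ need not be positive; I would resolve this by observing that each surviving diagonal term depends on the normalization only through $\overline{a_{\mathbf{p}}}\,a_{\mathbf{p}}=|a_{\mathbf{p}}|^2$, which is insensitive both to the chosen branch and to the phase $\Lambda_{\mathbf{p}}/|\Lambda_{\mathbf{p}}|$, and which is real by the Remark following Lemma~\ref{lemma:pairing}. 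Once these two observations are in place, the sum in \eqref{eq:pairing} is manifestly unchanged under any rescaling of the $Y^{\alpha,\beta}_\gamma$, and the convention-free prefactors propagate the conclusion to Corollary~\ref{cor:special-pairing}.
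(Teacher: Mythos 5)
Your proposal is correct and takes essentially the same route as the paper's own proof: you rescale each trivalent vertex by a scalar, observe that a special $\alpha$-fusion tree $Y^s_{\mathbf{p}}$ then rescales by the single product $\Lambda_{\mathbf{p}}$ of vertex scalars, use sesquilinearity of $\langle f,g\rangle=\mt(f^\dagger g)$ to show the normalization $\sqrt{\md(\gamma_{n-1})/\langle Y^s_{\mathbf{p}},Y^s_{\mathbf{p}}\rangle}$ absorbs $|\Lambda_{\mathbf{p}}|$ so each summand of $\mathcal{Y}$ changes only by a unimodular phase, and conclude via the orthogonal expansion \eqref{eq:orthogonal-pairing} (whose off-diagonal vanishing is manifestly stable under rescaling) that every diagonal term, hence the whole pairing, is unchanged — exactly the paper's computation with $z_{\mathbf{p}}$ and $e^{i\theta}$, and with your two added cautions (persistence of orthogonality, square-root branches) being points the paper treats implicitly. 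One caveat: your opening ``conceptual'' paragraph does not settle anything on its own, since the right-hand side $\langle\mathcal{Y},(\beta_n\cup\mathbb{I}_{n-1})\mathcal{Y}\rangle$ is itself built from the convention and convention-freeness of $N_r^\alpha(K)$ says nothing about it; the proof stands only because you then carry out the explicit cancellation.
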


\begin{proof}
Consider an arbitrary choice of trivalent morphism convention in the generic part of $\mathcal{D}^\dagger$. That is, for every $\alpha,\beta,\gamma \in \mathbb{R}\setminus \mathbb{Z}$ where $\gamma - \alpha - \beta \in H_r$, fix an arbitrary complex number $z^{\alpha,\beta}_\gamma$ and set $\tilde{Y}^{\alpha,\beta}_{\gamma} := z^{\alpha,\beta}_\gamma Y^{\alpha,\beta}_{\gamma}$. We depict our arbitrarily chosen trivalent morphism convention $\{ \tilde{Y}^{\alpha,\beta}_{\gamma} \}$ in graphical calculus with blue\footnote{For readers with monochrome print of this article: the color-coded version is available online. All diagrams drawn within this proof is colored in blue.} edges to distinguish with our original choice (with black edges) specified in Equation~\eqref{eq:conventions}:
\begin{equation} \label{eq:alt-conventions}
\tilde{Y}_{\gamma}^{\alpha,\beta}:=\hackcenter{\begin{tikzpicture}[ scale=1.1]
\begin{scope}[decoration={markings, mark=at position 0.5 with {\arrow{>}}}]
  \draw[ultra thick, blue, postaction=decorate] (0,0) to (.6,-.6);
  \draw[ultra thick, blue, postaction=decorate] (.6,-.6) to (.6,-1.2);
  \draw[ultra thick, blue, postaction=decorate] (1.2,0) to (.6,-.6);
\end{scope}
   \node at (0,0.2) {$\alpha$};
   \node at (1.2,0.2) {$\beta$};
   \node at (.6,-1.4) {$\gamma$};
\end{tikzpicture} }=
z^{\alpha,\beta}_{\gamma}Y^{\alpha,\beta}_{\gamma}
\in \Hom(V_\gamma, V_\alpha \otimes V_\beta) \cong \mathbb{C}.
\end{equation}

One can build special $\alpha$-fusion trees $\tilde{Y}_{\mathbf{p}}^s$ with respect to this arbitrary choice of convention:

\begin{equation}
\tilde{Y}_{\mathbf{p}}^s:=
\hackcenter{\begin{tikzpicture}[baseline=0, thick, scale=0.5, shift={(0,0)}]
\begin{scope}[shift={(1,-5.5)}]
\begin{scope}[decoration={markings, mark=at position 0.6 with {\arrow{>}}}]
    \draw[ultra thick, blue, postaction=decorate] (0,4.5) to (0.8125,3.75);
    \draw[ultra thick, blue, postaction=decorate] (1,4.5) to (0.8125,3.75);
    \draw[ultra thick, blue, postaction=decorate] (0.8125,3.75) to (1.625,3);
    \draw[ultra thick, blue, postaction=decorate] (2,4.5) to (1.625,3);
    \draw[ultra thick, blue, dotted] (1.625, 3) to (2.4375, 2.25);
    \draw[ultra thick, blue, postaction=decorate] (3, 4.5) to (2.4375, 2.25);
    \draw[ultra thick, blue, postaction=decorate] (2.4375,2.25) to (3.25,1.5);
    \draw[ultra thick, blue, postaction=decorate] (4, 4.5) to (3.25,1.5);
    \draw[ultra thick, blue, postaction=decorate] (5, 4.5) to (4.0625,0.75);
    \draw[ultra thick, blue, postaction=decorate] (3.25,1.5) to (4.0625,0.75);
    \draw[ultra thick, blue, postaction=decorate] (6, 4.5) to (4.875,0);
    \draw[ultra thick, blue, postaction=decorate] (4.0625,0.75) to (4.875,0);
    \draw[ultra thick, blue, postaction=decorate] (7, 4.5) to (5.6875,-0.75);
    \draw[ultra thick, blue, dotted] (4.875,0) to (5.6875,-0.75);
    \draw[ultra thick, blue, postaction=decorate] (8, 4.5) to (6.5,-1.5);
    \draw[ultra thick, blue, postaction=decorate] (5.6875,-0.75) to (6.5,-1.5);
    \draw[ultra thick, blue, postaction=decorate] (6.5,-1.5) to (6.5,-2.5);
\end{scope}
\end{scope}
\draw (1,-0.8) node {$\scs \alpha$};
\draw (2,-0.8) node {$\scs \alpha$};
\draw (3,-0.8) node {$\scs \alpha$};
\draw (3.5,-1.25) node {$\scs \cdots$};
\draw (4,-0.8) node {$\scs \alpha$};
\draw (5,-0.8) node {$\scs \alpha$};
\draw (6,-0.8) node {$\scs -\alpha$};
\draw (7,-0.8) node {$\scs -\alpha$};
\draw (7.5,-1.25) node {$\scs \cdots$};
\draw (8,-0.8) node {$\scs -\alpha$};
\draw (9,-0.8) node {$\scs -\alpha$};
\draw (2.2,-2.5) node {$\scs \gamma_1$};
\draw (3.3,-4) node {$\scs \gamma_{n-2}$};
\draw (4.25,-4.75) node {$\scs \gamma_{n-1}$};
\draw (5.2,-5.6) node {$\scs \gamma_{n-2}$};
\draw (6.5,-7) node {$\scs \gamma_{1}$};
\draw (7.5,-8.2) node {$\scs \alpha$};
\end{tikzpicture} } = z_{\mathbf{p}} Y_{\mathbf{p}}^s \in \Hom(V_{\alpha}, V_{\alpha}^{\otimes n} \otimes V_{-\alpha}^{\otimes (n-1)}) 
\end{equation}
where $z_{\mathbf{p}}:=\prod\limits_{j=0}^{n-2} z^{\gamma_{j},\alpha}_{\gamma_{j+1}}z^{\gamma_{j+1},-\alpha}_{\gamma_j} \in \mathbb{C}$ and $\gamma_0 := \alpha$. Now, observe that 
\begin{equation*}
\langle \tilde{Y}^s_{\mathbf{p}}, \tilde{Y}^s_{\mathbf{p}} \rangle = \langle z_{\mathbf{p}} Y^s_{\mathbf{p}}, z_{\mathbf{p}}  Y^s_{\mathbf{p}} \rangle =|z_{\mathbf{p}}|^2 \langle Y^s_{\mathbf{p}}, Y^s_{\mathbf{p}} \rangle
\end{equation*}
which implies
\begin{equation*}
|z_{\mathbf{p}}|=\frac{\sqrt{\langle \tilde{Y}^s_{\mathbf{p}}, \tilde{Y}^s_{\mathbf{p}} \rangle}}{\sqrt{\langle Y^s_{\mathbf{p}}, Y^s_{\mathbf{p}} \rangle}}
\end{equation*}
so
\begin{equation*}
\frac{\tilde{Y}^s_{\mathbf{p}}}{\sqrt{\langle \tilde{Y}^s_{\mathbf{p}}, \tilde{Y}^s_{\mathbf{p}} \rangle}} = e^{i\theta}\frac{Y^s_{\mathbf{p}}}{\sqrt{\langle Y^s_{\mathbf{p}},  Y^s_{\mathbf{p}} \rangle}}
\end{equation*}
where $\theta=\arg(z_{\mathbf{p}})$. Therefore,
\begin{equation} \label{eq:convention-pairing-identity}
\frac{\langle \tilde{Y}^s_{\mathbf{p}}, (\beta_n \cup \mathbb{I}_{n-1})\tilde{Y}^s_{\mathbf{p}} \rangle}{\langle \tilde{Y}^s_{\mathbf{p}}, \tilde{Y}^s_{\mathbf{p}} \rangle} = \frac{\langle Y^s_{\mathbf{p}}, (\beta_n \cup \mathbb{I}_{n-1})Y^s_{\mathbf{p}} \rangle}{\langle Y^s_{\mathbf{p}}, Y^s_{\mathbf{p}} \rangle} .
\end{equation}

Now, define 
\begin{equation*}
\mathcal{\tilde{Y}}:=\sum\limits_{\substack{\gamma_1 \in 2\alpha + H_r \\ \gamma_2 \in \gamma_1 + \alpha + H_r \\ \vdots \\ \gamma_{n-1} \in \gamma_{n-2} + \alpha + H_r \\ \mathbf{p}:=(\alpha,\gamma_1, \dots, \gamma_{n-1})}} \sqrt{\frac{\md(\gamma_{n-1})}{\langle \tilde{Y}^s_{\mathbf{p}}, \tilde{Y}^s_{\mathbf{p}} \rangle}}  \tilde{Y}^s_{\mathbf{p}}=\sum\limits_{\substack{\gamma_1 \in 2\alpha + H_r \\ \gamma_2 \in \gamma_1 + \alpha + H_r \\ \vdots \\ \gamma_{n-1} \in \gamma_{n-2} + \alpha + H_r \\ \mathbf{p}:=(\alpha,\gamma_1, \dots, \gamma_{n-1})}} \sqrt{\frac{\md(\gamma_{n-1})}{\langle \tilde{Y}^s_{\mathbf{p}}, \tilde{Y}^s_{\mathbf{p}} \rangle}}
\hackcenter{\begin{tikzpicture}[baseline=0, thick, scale=0.5, shift={(0,0)}]
\begin{scope}[shift={(1,-5.5)}]
\begin{scope}[decoration={markings, mark=at position 0.6 with {\arrow{>}}}]
    \draw[ultra thick, blue, postaction=decorate] (0,4.5) to (0.8125,3.75);
    \draw[ultra thick, blue, postaction=decorate] (1,4.5) to (0.8125,3.75);
    \draw[ultra thick, blue, postaction=decorate] (0.8125,3.75) to (1.625,3);
    \draw[ultra thick, blue, postaction=decorate] (2,4.5) to (1.625,3);
    \draw[ultra thick, blue, dotted] (1.625, 3) to (2.4375, 2.25);
    \draw[ultra thick, blue, postaction=decorate] (3, 4.5) to (2.4375, 2.25);
    \draw[ultra thick, blue, postaction=decorate] (2.4375,2.25) to (3.25,1.5);
    \draw[ultra thick, blue, postaction=decorate] (4, 4.5) to (3.25,1.5);
    \draw[ultra thick, blue, postaction=decorate] (5, 4.5) to (4.0625,0.75);
    \draw[ultra thick, blue, postaction=decorate] (3.25,1.5) to (4.0625,0.75);
    \draw[ultra thick, blue, postaction=decorate] (6, 4.5) to (4.875,0);
    \draw[ultra thick, blue, postaction=decorate] (4.0625,0.75) to (4.875,0);
    \draw[ultra thick, blue, postaction=decorate] (7, 4.5) to (5.6875,-0.75);
    \draw[ultra thick, blue, dotted] (4.875,0) to (5.6875,-0.75);
    \draw[ultra thick, blue, postaction=decorate] (8, 4.5) to (6.5,-1.5);
    \draw[ultra thick, blue, postaction=decorate] (5.6875,-0.75) to (6.5,-1.5);
    \draw[ultra thick, blue, postaction=decorate] (6.5,-1.5) to (6.5,-2.5);
\end{scope}
\end{scope}
\draw (1,-0.8) node {$\scs \alpha$};
\draw (2,-0.8) node {$\scs \alpha$};
\draw (3,-0.8) node {$\scs \alpha$};
\draw (3.5,-1.25) node {$\scs \cdots$};
\draw (4,-0.8) node {$\scs \alpha$};
\draw (5,-0.8) node {$\scs \alpha$};
\draw (6,-0.8) node {$\scs -\alpha$};
\draw (7,-0.8) node {$\scs -\alpha$};
\draw (7.5,-1.25) node {$\scs \cdots$};
\draw (8,-0.8) node {$\scs -\alpha$};
\draw (9,-0.8) node {$\scs -\alpha$};
\draw (2.2,-2.5) node {$\scs \gamma_1$};
\draw (3.3,-4) node {$\scs \gamma_{n-2}$};
\draw (4.25,-4.75) node {$\scs \gamma_{n-1}$};
\draw (5.2,-5.6) node {$\scs \gamma_{n-2}$};
\draw (6.5,-7) node {$\scs \gamma_{1}$};
\draw (7.5,-8.2) node {$\scs \alpha$};
\end{tikzpicture} } . 
\end{equation*}
Combining Equation~\eqref{eq:orthogonal-pairing} from the proof of Theorem~\ref{thm:special-pairing} and Equation~\eqref{eq:convention-pairing-identity} informs us that 
\begin{equation}
N_r^{\alpha}(K)=\langle \mathcal{Y}, (\beta_n \cup \mathbb{I}_{n-1})\mathcal{Y} \rangle = \langle \mathcal{\tilde{Y}}, (\beta_n \cup \mathbb{I}_{n-1})\mathcal{\tilde{Y}} \rangle
\end{equation}
as desired.
\end{proof}

\subsection{Discussion}
The work presented in this article bridges homological representation theory and the mathematics of topological quantum computation. We conclude this article with a short discussion on potential applications of this work.

\subsubsection{The density and unitarity of the $q$-specialized Lawrence representations}
The question of universality requires an analysis on the density of the image of the corresponding projective unitary braid group representation. Unlike the semisimple setting, non-semisimple Hermitian TQFTs equip our $\Hom$-spaces with Hermitian forms \emph{of potentially mixed signature}. Thus, we obtain braid group representations where the image lives in the indefinite unitary group\footnote{These are also commonly referred to as the \emph{pseudo-unitary group}.} $U(p,q)$ where $p+q\geq 1$ are integers depending on the choice of $\alpha$ parameters (see~\cite{neglecton}). If $p=0$ or $q=0$ then $U(p,q)$ is the usual (definite) unitary group $U(p+q)$. Note that the pseudo-unitary group $U(p,q)$ when $p,q\geq 1$ have an entirely different topological structure than the definite unitary group: the former is a noncompact Lie group with no currently known classification of finitely-generated dense subgroups whereas the latter is well-studied. We suspect that the topological interpretations that homological representations offer will help understand how to attack and/or defend against the indefinite unitarity issue. 

Using the notations of this paper, it was shown in~\cite{NSS-Burau} that the (reduced) Burau representation specialized at the fourth root of unity $\mathcal{L}_{n,1,\alpha}^2$ is projectively isomorphic to the braid group representation on $\mathcal{H}_{n,1,\alpha}^2$. Theorem~\ref{thm:fusiontree-homological} generalizes this result to the full family of Lawrence representations specialized at any $2r^{th}$-root of unity. Moreover,~\cite{NSS-Burau} provides a concrete analysis of the density and unitarity of $\alpha$-fusion trees at the fourth root of unity. Furthermore, Stoimenow has done prior work on the density and unitarity of the Burau and Lawrence-Krammer-Bigelow representations, but for parameters different from our choice of specializations~\cite{Stoimenow}. A similar analysis for the full family of Lawrence representations specialized at roots of unity would help us understand how to effectively handle questions pertaining to density and unitarity when we incorporate $V_\alpha$ representations.

\subsubsection{A quantum algorithm for non-semisimple quantum invariants}
One of the many intrinsic mathematical motivations for the pursuit of engineering a topological quantum computer comes from its ability to \emph{experimentally} calculate topological quantum invariants. Recall that there are quantum algorithms, such as the Aharonov-Jones-Landau algorithm~\cite{AJL} (also see~\cite{FKW}), that are capable of efficiently approximating quantum invariants over their classical algorithms.

In this new non-semisimple regime, it is natural to investigate analogous questions in the case of non-semisimple quantum invariants, such as the colored Alexander invariants. Note that the Alexander polynomial of a link, which is a special case of the colored Alexander invariant, can be computed in polynomial time by a Turing machine~\cite{DRZ} (also see~\cite{complexity}). Although it is clear that the category $\mathcal{D}^\dagger$ has the ability to compute non-semisimple invariants from the representation-theoretic point of view, fusion trees provide the natural language to encode quantum information as quantum states in the NSS TQC mathematical architecture developed in~\cite{neglecton, NSSTQC}. Theorem~\ref{thm:special-pairing} and Corollary~\ref{cor:special-pairing} verifies that non-semisimple quantum knot invariants can be formulated in terms of fusion trees and Hermitian forms. If additional techniques are introduced to overcome the non-unitarity issue, it would suggest that developing a quantum circuit model for non-semisimple quantum invariants is viable.

\subsubsection{NSS TQC via surface braid group representations}
TQC at its essence is about quantum information processing using TQFTs. We remind the reader that TQFTs and their corresponding modular categories naturally provide quantum representations of mapping class groups; in principle, one may search for new design protocols for TQC with surface braid group representations. In~\cite{MCG-TQC}, Bloomquist-Wang studied the possible gate sets that can emerge from mapping class group representations in the context of unitary modular fusion categories, i.e. finite semisimple modular categories. 

In this article, we focused on a family of homological representations of the (classical) braid group $\Br_n^{\Sigma}=\mathrm{MCG}(\Sigma_n)$ known as the Lawrence representations. An and Ko studied certain extensions of the Lawrence representations to introduce a new family of homological representations of surface braid groups~\cite{AnKo}. Bellingeri-Godelle-Guaschi later formalized An and Ko's construction in \cite{BGG17}, providing a natural explanation on how these homological representations arise. We would like to know what family of homological representations of surface braid groups correspond to quantum representations coming from non-semisimple TQFTs. Furthermore, if such relationship is explicitly found, then how can we translate this identification in the language of fusion trees and Hermitian pairings for TQC. The ongoing research program of Blanchet-Palmer-Shaukat and De~Renzi-Martel hints that such an answer is plausible. In~\cite{Heisenberg}, they developed a new family of homological representations coming from the compact, connected, oriented surface $\Sigma_{g,1}$ of genus $g\geq 1$ with $1$ boundary component. De~Renzi-Martel~\cite{DRM} established an explicit isomorphism between specific families of homological representations, comparable to the family developed by Blanchet-Palmer-Shaukat, and quantum representations of mapping class groups coming from the non-semisimple TQFT. Their series of works provide insight into how to identify homological representations of surface braid groups for TQC.


\end{document}